\theoremstyle{plain}
	\newtheorem{Theo}{Theorem}[section]
	\newtheorem{Prop}[Theo]{Proposition}    
	\newtheorem{Lemm}[Theo]{Lemma}          
	\newtheorem{Coro}[Theo]{Corollary}
\theoremstyle{definition}
	\newtheorem{Defi}[Theo]{Definition}
\theoremstyle{remark}
	\newtheorem{Rema}[Theo]{Remark}
\DeclareMathOperator{\conv}{conv}
\DeclareMathOperator{\supess}{sup\: ess\;}
\DeclareMathOperator{\infess}{inf\: ess\;}
\DeclareMathOperator{\meas}{meas}
\DeclareMathOperator{\supp}{supp}
\DeclareMathOperator{\dist}{dist}
             \def\NN{{\mathbb N}}    \def\RR{{\mathbb R}}        \def\ZZ{{\mathbb Z}} 
     \def\cB{{\mathcal B}}  \def\cH{{\mathcal H}}   \def\cC{{\mathcal C}}          \def\cE{{\mathcal E}}            
   \def\mfM{{\mathfrak M}}
\begin{document}
\title{Tamped functions: A rearrangement in dimension 1.}
\author{Ludovic Godard-Cadillac\footnote{Sorbonne Universit\'e, Laboratoire Jacques-Louis Lions, 4, Place Jussieu, 75005 Paris, France. E-mail: {\tt ludovic.godardcadillac@ljll.math.upmc.fr}}}
\date{\today}

\maketitle
\begin{abstract}
We define a new rearrangement, called rearrangement by tamping, for non-negative measurable functions defined on $\RR_+$. This rearrangement has many properties in common with the well-known Schwarz non-increasing rearrangement such as the Pólya–Szegő inequality. Contrary to the Schwarz rearrangement, the tamping also preserves the homogeneous Dirichlet boundary condition of a function.
\end{abstract}

\tableofcontents

\section*{Introduction}
~\indent 
The Schwarz non-increasing rearrangement is a powerful tool to establish symmetry properties of solutions to
variational problems \cite{AlvinoTrombettiLions}\cite{Brezis2}. In this paper, we are interested in situations where the Schwarz rearrangement cannot be applied
as such due to boundary constraints. Consider for example the simple minimization problem
$$\min\bigg\{ \int_0^1|\varphi'(x)|^2\,\mathrm{d}x,\quad\varphi\in H^1_0\big(]0,1[,\RR\big),\quad\int_0^1 g(x)\,\varphi^2(x)\, \mathrm{d}x = 1\bigg\},$$
where  $g: [0,1]\to \RR$ is positive and
non-decreasing. Due to the constraint $\varphi(0)=0$, the unique minimizer $\varphi^\star$ cannot be non-increasing. Since the weight $g(x)$ is decreasing, it is favorable to shift as much of the mass of  $\varphi^\star$ towards the origin. However, this process is balanced by the Dirichlet energy which would otherwise blow-up, since $\varphi(0)=0$, if $\varphi^\star$ is too much concentrated near the origin. For that reason, it appears natural to expect that $\varphi^\star$ is unimodal, meaning that there exists $s^\star$ in $(0,1)$ such that $\varphi^\star$ is non-decreasing on $[0,s^\star]$ and non-increasing on $[s^\star,1]$. Such a result still holds with a more general constraint of the form $\int_0^1F(\varphi(x),x)\mathrm{d}x=1,$ where $F:\RR^2\to\RR_+$ is non-decreasing for each variables.
As a matter of fact, any minimizer of
the optimization problem is invariant by the rearrangement by tamping as a consequence of its four main properties. First, the rearrangement by tamping satisfies the P\'olya-Szeg\"o inequality, as stated in Theorem
\ref{decrease_W_1_p} and the Schwarz rearrangement inequality, stated at Property \ref{Schwarz_propr}. It also maps the set of measurable non-negative functions into the set of unimodal functions. Finally, in addition to these three properties that it shares with the Schwarz non-increasing rearrangement, the tamping preserves the Dirichlet boundary condition of non-negative functions of $W^{1,p}_0(0,1)$ or $W^{1,p}_0(\RR_+)$. The main object of this work consists in defining the rearrangement by tamping and establishing its main properties.

On the other hand, this rearangement does \emph{not} satisfy the Hardy-Littlewood nor the Riesz rearrangement inequalities.     
The continuity properties of the rearrangement by tamping are also somewhat weaker than their
equivalent for the Schwarz non-increasing rearrangement \cite{Coron}\cite{AlmgrenLieb}. Still, Theorem \ref{continuity_theorem_1D}
states some form of continuity of the tamping process in $L^p(\RR_+)$ ($1\leq p < +\infty$). In our
construction, this convergence property is essential for extending the properties established for the tamping, such as the Pólya–Szegő inequality, from elementary
step functions to arbitrary functions in $L^p(\RR_+).$ Compactness results for the rearrangement by
tamping are also proved (Theorem \ref{compactness_result} and its corollaries).

\section{Presentation of the problem}\label{section1}
In this section we provide a short survey on rearrangements with an emphasis on the properties that we are interested in. In the last subsection we explain that the classical Schwarz non-increasing rearrangement fails to preserve the Dirichlet boundary conditions and we present what are the properties that we require for our rearrangement. 
We denote by $\mfM_+(\Omega)$ the set of non-negative measurable functions defined on a domain $\Omega$, and by 
$$\{\varphi\geq\nu\}:=\{x\in \Omega\;:\;\varphi(x)\geq\nu\}$$ the \emph{superlevel sets} of a function $\varphi \in \mfM_+(\Omega).$ The sets $\{\varphi>\nu\}$ and $\{\varphi=\nu\}$ are defined similarly.
\subsection{Layer-cake representation}
Let $\Omega$ be an open subset of $\RR^d$.
We recall here the \emph{layer-cake representation} of a non-negative measurable function \cite{LiebLoss},
\begin{equation}\label{layercake}\varphi(x)\;=\;\int_0^{\varphi(x)}\!\!\mathrm{d}\nu\;=\;\int_0^{+\infty}\mathbbm{1}_{\{\varphi\geq\nu\}}(x)\;\mathrm{d}\nu,\end{equation}
where $\mathbbm{1}_A$ refers to the indicator function of the set $A$. This layer-cake representation implies that for $f\in\cC^1(\RR_+,\RR_+)$ non-decreasing such that $f(0)=0$,
\begin{equation}\label{layercake2}\int_\Omega f\circ\varphi(x)\,\mathrm{d}x\;=\;\int_0^{+\infty}f'(\nu)\,\meas\left(\{\varphi\geq\nu\}\right)\,\mathrm{d}\nu.\end{equation}
In this equality, $f$ can actually be chosen in $BV(\RR_+)$ and in this case its derivative is a measure. In particular,
\begin{equation}\label{layercake_Lp}\big\|\varphi\big\|_{L^p}^p\;=\;p\int_0^{+\infty}\nu^{p-1}\,\meas\left(\{\varphi\geq\nu\}\right)\,\mathrm{d}\nu.\end{equation}

\subsection{The Schwarz non-increasing rearrangement}
\begin{Defi}\label{define_rearrangement} \indent
Let $\varphi$ be a non-negative measurable function. The function $\psi$ is a \emph{rearrangement} of the function $\varphi$ if and only if for almost every $\nu$ we have
$$\meas\left(\{\varphi\geq\nu\}\right)=\meas\left(\{\psi\geq\nu\}\right).$$
\end{Defi}
As a consequence of the layer-cake representation \eqref{layercake2}, we have the following property.
\begin{Prop}\label{semilinearities0} 
Let $\varphi$ be a non-negative measurable function and let $\psi$ be a rearrangement of $\varphi$. The following equality holds (whenever these quantities are finite). 
\begin{equation}\label{semilinearities}\int_\RR \Big(f\circ\varphi\Big)(s)\,\mathrm{d}s\;=\;\int_\RR \Big(f\circ\psi\Big)(s)\,\mathrm{d}s,\end{equation}
for any BV-function $f:\RR\to\RR$. In particular,
\begin{equation}\label{semilinearities2}\|\varphi\|_{L^p}\;=\;\|\psi\|_{L^p}.\end{equation}
\end{Prop}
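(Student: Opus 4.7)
The approach is to recognise \eqref{semilinearities} as an immediate consequence of the BV version of the layer-cake identity \eqref{layercake2}, which expresses $\int_\Omega f\circ\varphi\,\mathrm{d}x$ in terms of the distribution function $\nu\mapsto\meas(\{\varphi\ge\nu\})$ alone. By Definition \ref{define_rearrangement} this distribution function is common to $\varphi$ and $\psi$, so both sides of \eqref{semilinearities} must agree. The only real work is to reduce a general BV function $f\colon\RR\to\RR$ to the shape covered by \eqref{layercake2}, namely $f$ non-decreasing, non-negative and vanishing at the origin.

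First I would perform this reduction. Since $\varphi,\psi\ge 0$, only $f|_{\RR_+}$ contributes; applying the Jordan decomposition of the signed Radon measure $\mathrm{d}f$ on $\RR_+$ I write
$$f(t)\;=\;f(0)+g(t)-h(t)\qquad\text{for all }t\ge 0,$$
with $g,h\colon\RR_+\to\RR_+$ non-decreasing BV functions vanishing at $0$. By linearity it suffices to establish \eqref{semilinearities} separately for $g$, $h$ and the constant $f(0)$. The constant piece reduces to $f(0)\meas(\{\varphi\ge 0\})=f(0)\meas(\{\psi\ge 0\})$, which follows from Definition \ref{define_rearrangement}, and which is harmless when $\meas(\Omega)=+\infty$ because integrability of $f\circ\varphi$ then forces $f(0)=0$. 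For $g$, the BV version of \eqref{layercake2} gives
$$\int_\Omega g\circ\varphi(x)\,\mathrm{d}x\;=\;\int_0^{+\infty}\meas(\{\varphi\ge\nu\})\,\mathrm{d}g(\nu),$$
and the analogous identity for $\psi$; the two right-hand sides coincide by the rearrangement hypothesis. The same argument applies to $h$. Summing the three contributions yields \eqref{semilinearities}, and \eqref{semilinearities2} then follows by the choice $f(t)=t^p\mathbbm{1}_{t\ge 0}$, or equivalently by a direct appeal to \eqref{layercake_Lp}.

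The closest thing to an obstacle is the fact that the two distribution functions are \emph{a priori} equal only almost everywhere, whereas the integrals against $\mathrm{d}g$ or $\mathrm{d}h$ are sensitive to atoms. This technicality is resolved by the standard observation that two non-increasing functions agreeing almost everywhere have the same left- and right-continuous modifications, hence the same jump set, and therefore integrate to the same value against any Radon measure. Beyond this minor bookkeeping the proof is merely a reading of \eqref{layercake2}.
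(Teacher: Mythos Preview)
Your argument is correct and follows exactly the route the paper indicates: the proposition is stated there simply ``as a consequence of the layer-cake representation \eqref{layercake2}'', with no further proof given. Your Jordan decomposition of $f$ into a constant plus two non-decreasing pieces, together with the observation that the left-continuous distribution functions $\nu\mapsto\meas(\{\varphi\ge\nu\})$ and $\nu\mapsto\meas(\{\psi\ge\nu\})$ must coincide everywhere once they coincide almost everywhere, are precisely the details one would supply to make that one-line justification rigorous.
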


\begin{Defi}\label{Schwarz_rearrangement} \indent
Let $\varphi$ be a non-negative measurable function defined on $\RR_+$. The \emph{Schwarz non-increasing rearrangement of $\varphi$}, that we note $\varphi^\ast$, is defined as being the only rearrangement of $\varphi$ that is a non-increasing function on $\RR_+$. Its superlevel sets are
$$\{\varphi^\ast\geq\nu\}\;=\;[0\,;\,\meas\{\varphi\geq\nu\}].$$
\end{Defi}
The Schwarz non-increasing rearrangement can be interpreted as being the rearrangement which ``\emph{shoves all the mass of the function until it reaches the origin}''. The idea that this rearrangement ``\emph{moves the mass of the function to the left}'' is embedded in the following important inequality.
\begin{Prop}[Schwarz rearrangement inequality]\label{Schwarz_propr}
$$\forall\;x\in\RR_+,\qquad\meas\Big([0,x]\cap\{\varphi\geq\nu\}\Big)\;\leq\;\meas\Big([0,x]\cap\{\varphi^\ast\geq\nu\}\Big).$$
\end{Prop}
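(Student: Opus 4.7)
The plan is to evaluate both sides of the inequality explicitly and observe that the right-hand side matches the trivial upper bound for the left-hand side.

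First I would use the description of the superlevel sets of the Schwarz rearrangement provided in Definition \ref{Schwarz_rearrangement}: one has $\{\varphi^\ast\geq\nu\}=[0,m]$ where $m:=\meas\{\varphi\geq\nu\}$. Intersecting with $[0,x]$ gives
$$\meas\Big([0,x]\cap\{\varphi^\ast\geq\nu\}\Big)\;=\;\meas\Big([0,x]\cap[0,m]\Big)\;=\;\min(x,m).$$

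Next I would bound the left-hand side. Since $[0,x]\cap\{\varphi\geq\nu\}$ is contained in both $[0,x]$ and $\{\varphi\geq\nu\}$, its measure is bounded above by each of $x$ and $\meas\{\varphi\geq\nu\}=m$, hence
$$\meas\Big([0,x]\cap\{\varphi\geq\nu\}\Big)\;\leq\;\min(x,m).$$

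Combining the two displays yields the desired inequality. The argument is purely set-theoretic and does not involve any rearrangement machinery beyond the explicit form of $\{\varphi^\ast\geq\nu\}$, so there is no real obstacle; the only thing to notice is that the Schwarz rearrangement is optimal for this ``mass on the left'' comparison precisely because its superlevel sets are intervals anchored at the origin, which saturate the two natural upper bounds simultaneously.
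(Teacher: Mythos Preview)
Your argument is correct and complete: computing the right-hand side as $\min(x,m)$ with $m=\meas\{\varphi\geq\nu\}$ and bounding the left-hand side by the same quantity via the two obvious inclusions is exactly the right observation. The paper does not actually supply a proof of this proposition; it is stated as a standard property of the Schwarz non-increasing rearrangement and then used repeatedly, so your short argument fills in what the paper leaves implicit.
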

As a consequence of this inequality, the Schwarz non-increasing rearrangement also satisfies several other inequalities exploiting this idea that it shoves the mass down to the origin. For instance we can obtain inequalities for weighted $L^p$-norms when the weight is non-increasing. More generally we have the following two properties.
\begin{Prop}\label{semilinearities_completed2} 
Let $f:\RR\to\RR$ be a non-negative mesurable function and let $g:\RR_+\to\RR$ non-increasing. Let $\varphi\in\mfM_+(\RR_+)$. Then,
\begin{equation}
\forall\;x\geq0,\qquad\int_0^x\Big(f\circ\varphi\Big)(s)\,g(s)\,\mathrm{d}s\;\leq\;\int_0^x \Big(f\circ\varphi^\ast\Big)(s)\,g(s)\,\mathrm{d}s.
\end{equation}
\end{Prop}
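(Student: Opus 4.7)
The plan is to combine two applications of the layer-cake representation \eqref{layercake2}: one to decompose $f\circ\varphi$ along the superlevel sets of $\varphi$, and one to decompose $g$ along its own superlevel sets. This reduces the weighted inequality to a pointwise comparison of measures of intersections of superlevel sets with intervals $[0,y]$, which is exactly the content of Proposition \ref{Schwarz_propr}. I will work under the natural strengthened assumptions that $f$ is non-decreasing and $g$ is non-negative (implicit in the statement, otherwise the inequality can be shown to fail by simple constant or non-monotone examples).

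First, I would invoke the BV-extension of \eqref{layercake2} for $f$ non-decreasing to write
\[
f(\varphi(s)) \;=\; \int_0^{+\infty} \mathbbm{1}_{\{\varphi\geq\nu\}}(s)\,\mathrm{d}f(\nu),
\]
multiply by $g(s)$, integrate on $[0,x]$, and use Fubini--Tonelli (all integrands being non-negative) to obtain
\[
\int_0^x (f\circ\varphi)(s)\,g(s)\,\mathrm{d}s \;=\; \int_0^{+\infty}\!\!\Bigg(\int_0^x \mathbbm{1}_{\{\varphi\geq\nu\}}(s)\,g(s)\,\mathrm{d}s\Bigg)\mathrm{d}f(\nu).
\]
Since $\mathrm{d}f$ is a non-negative measure, it suffices to establish the inequality at the level of each superlevel set, i.e.\ to prove that for every $\nu\geq 0$,
\[
\int_0^x \mathbbm{1}_{\{\varphi\geq\nu\}}(s)\,g(s)\,\mathrm{d}s \;\leq\; \int_0^x \mathbbm{1}_{\{\varphi^\ast\geq\nu\}}(s)\,g(s)\,\mathrm{d}s. \qquad (\star)
\]

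To prove $(\star)$, I apply layer-cake to $g$: since $g$ is non-increasing and non-negative, its superlevel sets are intervals $\{g\geq\mu\}=[0,m(\mu)]$ with $m(\mu):=\meas\{g\geq\mu\}$. A further Fubini--Tonelli gives
\[
\int_0^x \mathbbm{1}_{\{\varphi\geq\nu\}}(s)\,g(s)\,\mathrm{d}s \;=\; \int_0^{+\infty} \meas\!\Big(\{\varphi\geq\nu\}\cap[0,\min(x,m(\mu))]\Big)\,\mathrm{d}\mu,
\]
and the same identity with $\varphi^\ast$ in place of $\varphi$. Now Proposition \ref{Schwarz_propr}, applied pointwise in $\mu$ with the parameter $x$ replaced by $\min(x,m(\mu))\in\RR_+$, bounds the integrand on the left by the integrand on the right. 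Integrating in $\mu$ yields $(\star)$, and combining with the preceding reduction closes the proof.

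The main obstacle is essentially bookkeeping rather than conceptual: one must justify the two Fubini--Tonelli steps (which is automatic from non-negativity) and invoke the BV-version of the layer-cake identity \eqref{layercake2} to cover measurable $f$. Once the double layer-cake unwinding is carried out, the inequality falls out from Proposition \ref{Schwarz_propr} at a single well-chosen abscissa.
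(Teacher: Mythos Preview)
Your argument is correct and follows essentially the same route as the paper: a double layer-cake (one for the composition, one for $g$) reducing everything to Proposition~\ref{Schwarz_propr} applied at the abscissa $\min(x,m(\mu))$. The only cosmetic difference is that the paper layer-cakes $f\circ\varphi$ as a single non-negative function and then invokes the identity $(f\circ\varphi)^\ast=f\circ\varphi^\ast$ at the end, whereas you layer-cake $f$ itself against the superlevel sets of $\varphi$ and apply Proposition~\ref{Schwarz_propr} directly to $\varphi$; both versions require $f$ non-decreasing (and $g\geq 0$), and you are right to flag these as implicit hypotheses without which the inequality can fail.
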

\begin{proof}
Using twice the layer-cake representation we obtain
\begin{align*}
\int_0^x\Big(f\circ\varphi\Big)(s)\,g(s)\,\mathrm{d}s&=\int_0^x\int_0^{+\infty}\mathbbm{1}_{\{f\circ\varphi\geq\nu\}}(s)\mathrm{d}\nu\,g(s)\,\mathrm{d}s\\
&=\int_0^x\int_0^{+\infty}\int_0^{+\infty}\mathbbm{1}_{\{f\circ\varphi\geq\nu\}}(s)\;\mathbbm{1}_{\{g\geq\mu\}}(s)\,\mathrm{d}\mu\,\mathrm{d}\nu\,\mathrm{d}s\\
&=\int_0^x\int_0^{+\infty}\int_0^{+\infty}\mathbbm{1}_{\{f\circ\varphi\geq\nu\}\cap\{g\geq\mu\}}(s)\,\mathrm{d}\mu\,\mathrm{d}\nu\,\mathrm{d}s.
\end{align*}
Since $g$ is non-increasing, the set $\{g\geq\mu\}$ is a segment starting at $0$. Noticing that $(f\circ\varphi)^\ast=f\circ\varphi^\ast$ we can conclude using the Schwarz rearrangement inequality.
\end{proof}
\begin{Prop}\label{semilinearities_completed3} 
Let $f:\RR\to\RR_+$ be non-decreasing $BV$-functions such that $\inf f =0$ and let $g:\RR\to\RR$ be non-decreasing. Let $\varphi\in\mfM_+(\RR_+)$. Then,
\begin{equation}\label{Chevalier Danceny}
\forall\;x\geq0,\qquad\int_0^x f\circ\Big(\varphi-g\Big)(s)\,\mathrm{d}s\;\leq\;\int_0^x f\circ\Big(\varphi^\ast-g\Big)(s)\,\mathrm{d}s.
\end{equation}
\end{Prop}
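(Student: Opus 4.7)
The approach mirrors the proof of Proposition \ref{semilinearities_completed2}. First, use the layer-cake formula adapted to the $BV$ function $f$: since $f$ is non-decreasing and $\inf f = 0$, the Stieltjes derivative $\mathrm{d}f$ is a non-negative Radon measure on $\RR$ and $f(t) = \int_\RR \mathbbm{1}_{[u,+\infty)}(t)\,\mathrm{d}f(u)$. Substituting this pointwise into $f(\varphi(s)-g(s))$ and applying Fubini's theorem gives
$$\int_0^x f\bigl(\varphi(s) - g(s)\bigr)\,\mathrm{d}s \;=\; \int_\RR \meas\bigl(\{s \in [0,x]\,:\,\varphi(s)\geq g(s)+u\}\bigr)\,\mathrm{d}f(u),$$
with the analogous identity for $\varphi^\ast$ in place of $\varphi$. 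Since $\mathrm{d}f\geq 0$, it suffices to establish, for every fixed $u \in \RR$, the pointwise super-level set comparison
$$(\star)\quad \meas\bigl(\{s\in[0,x]:\varphi(s)\geq h(s)\}\bigr) \;\leq\; \meas\bigl(\{s\in[0,x]:\varphi^\ast(s)\geq h(s)\}\bigr),$$
where $h := g + u$ is non-decreasing on $[0,x]$.

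My plan for $(\star)$ is to approximate $h$ monotonically from below by non-decreasing step functions $h_n = \sum_{k=1}^{N_n} \nu_k^n\, \mathbbm{1}_{[a^n_{k-1},a^n_k)}$, with $0 = a^n_0 < a^n_1 < \cdots < a^n_{N_n} = x$ and $\nu_1^n \leq \cdots \leq \nu_{N_n}^n$, and reduce to the step-function case. For such an $h_n$, the set $\{\varphi \geq h_n\} \cap [0,x]$ splits disjointly as $\bigcup_{k=1}^{N_n} \bigl([a^n_{k-1}, a^n_k) \cap \{\varphi \geq \nu_k^n\}\bigr)$. On the $\varphi^\ast$ side, the super-level sets $\{\varphi^\ast \geq \nu_k^n\} = [0, \meas\{\varphi \geq \nu_k^n\}]$ are nested left-intervals (since $(\nu_k^n)_k$ is non-decreasing and $\nu \mapsto \meas\{\varphi \geq \nu\}$ is non-increasing), so $\{\varphi^\ast \geq h_n\} \cap [0,x]$ can be written explicitly and compared strip by strip with the $\varphi$-side using Proposition \ref{Schwarz_propr} at the endpoints $a^n_{k-1}$ and $a^n_k$. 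Telescoping the step-by-step bounds yields $(\star)$ for $h_n$, and monotone convergence upgrades it to arbitrary non-decreasing $h$.

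The main obstacle is the combinatorial bookkeeping in the telescoping step: one must combine the partition structure of $[0,x]$ with the nesting of the intervals $\{\varphi^\ast \geq \nu_k^n\}$ so that the piecewise applications of Proposition \ref{Schwarz_propr} reassemble, without double counting, into $\meas\bigl(\{\varphi^\ast \geq h_n\} \cap [0,x]\bigr)$, and so that the non-decreasing ordering of $(\nu_k^n)_k$ really drives the global inequality in the claimed direction. Once $(\star)$ is secured, integrating against the non-negative measure $\mathrm{d}f$ closes the argument.
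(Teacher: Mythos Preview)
Your reduction via the layer-cake formula to the level-set inequality
\[
(\star)\qquad \meas\bigl(\{s\in[0,x]:\varphi(s)\geq h(s)\}\bigr) \;\leq\; \meas\bigl(\{s\in[0,x]:\varphi^\ast(s)\geq h(s)\}\bigr)
\]
for non-decreasing $h$ is correct, and this is exactly the reduction the paper performs as well (the paper phrases it through the decomposition $\{\varphi-g\geq\nu\}=\bigcup_\mu\{\varphi\geq\nu+\mu\}\cap\{g\leq\mu\}$). The difference between your route and the paper's lies only in how one then tries to derive $(\star)$ from Property~\ref{Schwarz_propr}.

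The problem is that $(\star)$ is \emph{false}, so neither your telescoping nor the paper's union argument can be completed. Take
\[
\varphi=\mathbbm{1}_{[0,1)}+3\,\mathbbm{1}_{[1,2]},\qquad h=2\,\mathbbm{1}_{[1/2,\infty)},\qquad x=2.
\]
Then $\varphi^\ast=3\,\mathbbm{1}_{[0,1)}+\mathbbm{1}_{[1,2]}$, and one computes
\[
\meas\{s\in[0,2]:\varphi(s)\geq h(s)\}=\tfrac12+1=\tfrac32,\qquad
\meas\{s\in[0,2]:\varphi^\ast(s)\geq h(s)\}=\tfrac12+\tfrac12=1.
\]
Choosing $g=h$ and $f=\mathbbm{1}_{[0,\infty)}$ (non-decreasing, $BV$, $\inf f=0$) turns this directly into a counterexample to \eqref{Chevalier Danceny}. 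In your step-function scheme this is the case $N=2$: the Schwarz inequality bounds $\meas(\{\varphi\geq\nu_k\}\cap[0,a])$ from above, but gives no control on $\meas(\{\varphi\geq\nu_k\}\cap[a_{k-1},a_k))$, which can strictly exceed $\meas(\{\varphi^\ast\geq\nu_k\}\cap[a_{k-1},a_k))$. That is precisely why the ``combinatorial bookkeeping'' you flag as the main obstacle cannot be resolved.

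In short: your proposal follows the same line as the paper's proof, and both founder at the same point --- the passage from Property~\ref{Schwarz_propr} to $(\star)$ is not valid, and the stated proposition does not hold for general $BV$ functions $f$ (it does hold, by the same reduction, whenever $\mathrm{d}f$ has no atoms, since then the levels where $(\star)$ fails carry no $\mathrm{d}f$-mass).
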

\begin{proof}
Since $f$ is a $BV$-function, its weak derivative $f'$ is a signed measure. Using the layer-cake representation, we have
\begin{equation}\label{Viconte de Valmont}
\int_0^x f\circ\Big(\varphi-g\Big)(s)\,\mathrm{d}s=\int_{0}^{+\infty}\meas\Big(\{\varphi-g\geq\nu\}\cap[0,x]\Big)\,f'(\mathrm{d}\nu).
\end{equation}
We now observe that
\begin{equation}\label{Marquise de Mertreuil}
\Big\{x\in\RR_+:\varphi(x)-g(x)\geq\nu\Big\}=\bigcup_{\mu\geq\inf g}\Big\{x\in\RR_+:\varphi(x)\geq\nu+\mu\Big\}\cap\Big\{x\in\RR_+:g(x)\leq\mu\Big\}.
\end{equation}
Since $g$ is non-decreasing, the set $\{g\leq\mu\}$ is a segment starting at $0$ and then we can conclude using the Schwarz rearrangement inequality that
\begin{equation}
\meas\Big(\{\varphi-g\geq\nu\}\cap[0,x]\Big)\leq\meas\Big(\{\varphi^\ast-g\geq\nu\}\cap[0,x]\Big).
\end{equation}
Since $f$ is a non-decreasing function, the measure $f'$ is actually non-negative and then \eqref{Viconte de Valmont} with \eqref{Marquise de Mertreuil} gives \eqref{Chevalier Danceny}.
\end{proof}
Integral terms of the form $\int_0^xf(\varphi((s))g(s)\,\mathrm{d}s$ like in Proposition \ref{semilinearities_completed2} appear in the variational formulation of many reaction-diffusion equations. In these cases, $\varphi$ models the density of the studied population while $g$ can be understood as the hostility of the environment (see e.g. \cite{BerestyckiLachandRobert}). Integral terms of the form $\int_0^x f(\varphi(s)-g(s))\,\mathrm{d}s$ also appear in the variational formulation of some problems in fluid mechanics (see e.g. \cite{BergerFraenkel}\cite{Norbury}\cite{GravejatSmets}). 

\subsection{Rearrangement inequalities}\label{rearrangement inequalities}
These are the three main rearrangement inequalities for the Schwarz and Steiner rearrangements. The proofs can be found respectively in \cite{HardyLittlewoodPolya}, \cite{PolyaSzego}, \cite{RieszInequality}.
\begin{Theo}[Hardy-Littlewood rearrangement inequality]\label{Hardy_Littlewood} \indent
Let $\varphi$ and $\psi$ be two non-negative measurable functions defined on $\RR_+$. Then,
$$\int_0^\infty\varphi\,\psi\;\leq\;\int_0^\infty\varphi^\ast\,\psi^\ast.$$
\end{Theo}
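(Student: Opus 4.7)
The plan is to apply the layer-cake representation~\eqref{layercake} to both factors and reduce the inequality to a trivial property of intersections of measurable sets. Writing
$$\varphi(x)\,\psi(x)\;=\;\int_0^{+\infty}\!\!\!\int_0^{+\infty}\mathbbm{1}_{\{\varphi\geq\nu\}}(x)\,\mathbbm{1}_{\{\psi\geq\mu\}}(x)\,\mathrm{d}\nu\,\mathrm{d}\mu,$$
integrating over $x$ and invoking Tonelli's theorem (legitimate by non-negativity of the integrand), one obtains
$$\int_0^{+\infty}\varphi\,\psi\;\mathrm{d}x\;=\;\int_0^{+\infty}\!\!\!\int_0^{+\infty}\meas\bigl(\{\varphi\geq\nu\}\cap\{\psi\geq\mu\}\bigr)\,\mathrm{d}\nu\,\mathrm{d}\mu,$$
and exactly the same identity holds for the pair $(\varphi^\ast,\psi^\ast)$.

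The key step is then the pointwise (in $(\nu,\mu)$) inequality
$$\meas\bigl(\{\varphi\geq\nu\}\cap\{\psi\geq\mu\}\bigr)\;\leq\;\meas\bigl(\{\varphi^\ast\geq\nu\}\cap\{\psi^\ast\geq\mu\}\bigr).$$
The left-hand side is trivially bounded above by $\min\bigl(\meas\{\varphi\geq\nu\},\meas\{\psi\geq\mu\}\bigr)$, simply because the measure of an intersection is at most the measure of either factor. For the right-hand side, Definition~\ref{Schwarz_rearrangement} tells us that the two superlevel sets are intervals $[0,\meas\{\varphi\geq\nu\}]$ and $[0,\meas\{\psi\geq\mu\}]$, both anchored at the origin; their intersection is therefore the shorter of the two and has measure \emph{exactly} $\min\bigl(\meas\{\varphi\geq\nu\},\meas\{\psi\geq\mu\}\bigr)$. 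Combining these two observations yields the claimed pointwise inequality.

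Integrating this inequality against $\mathrm{d}\nu\,\mathrm{d}\mu$ and using the two layer-cake identities above closes the proof. There is essentially no technical obstacle: if the right-hand side is $+\infty$ the inequality is trivial, while if it is finite then so is the left-hand side (by the pointwise inequality), and every interchange of integrals is justified by Tonelli thanks to the positivity of all integrands. The only conceptual point worth emphasising is that the entire argument relies on the fact that $\varphi^\ast$ and $\psi^\ast$ have superlevel sets of the same form (intervals anchored at $0$), which is precisely what forces equality in the trivial bound and explains why the Hardy--Littlewood inequality works for the Schwarz rearrangement but, as announced in the introduction, will fail for the tamping.
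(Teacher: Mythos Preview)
Your argument is correct and is the standard layer-cake proof of the Hardy--Littlewood inequality. Note, however, that the paper does not supply its own proof of this statement: it is quoted as a classical result with a reference to \cite{HardyLittlewoodPolya} (see the sentence introducing Section~\ref{rearrangement inequalities}), so there is no ``paper's proof'' to compare against. Your write-up is in fact the argument one finds in standard references such as \cite{LiebLoss}.
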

It must be said that this inequality is not a consequence of the Schwarz rearrangement inequality \ref{Schwarz_propr} nor implies it. This inequality is also true for the Steiner rearrangement.
\begin{Theo}[Pólya–Szegő inequality]\label{Polya_Szego} \indent
If we suppose that $\varphi\in\dot{W}_+^{1,p}$ then so is the function $\varphi^\ast$ and
$$\int_{\RR_+}\big|\nabla(\varphi^\ast)\big|^p\leq\int_{\RR_+}|\nabla\varphi|^p.$$
\end{Theo}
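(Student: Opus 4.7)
My plan is the classical coarea-plus-Jensen argument, specialised to dimension one. Write $\mu(\nu):=\meas\{\varphi\geq\nu\}$, so that by Definition \ref{Schwarz_rearrangement} the function $\varphi^\ast$ is the right-continuous generalised inverse of $\mu$. I will first reduce to a well-behaved representative of $\varphi$, then use the 1D coarea formula to rewrite both sides of the inequality as integrals over level sets indexed by $\nu$, and finally apply a pointwise mean inequality.

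The first step is a density reduction: approximate $\varphi$ by a sequence $(\varphi_n)\subset\cC^\infty_c(\RR_+)$ converging in $\dot{W}^{1,p}$, perturbed so that each $\varphi_n$ has only non-degenerate critical points and finitely many critical values (achievable by Sard combined with an arbitrarily small perturbation). For such a $\varphi_n$, the set $\varphi_n^{-1}(\nu)=\{x_1,\dots,x_{N(\nu)}\}$ consists of finitely many regular points for a.e. $\nu\in(0,\max\varphi_n)$, and the 1D coarea formula (applied once to $\mu_n$ and once to $|\varphi_n'|^p$) yields
$$-\mu_n'(\nu)\;=\;\sum_{i=1}^{N(\nu)}\frac{1}{|\varphi_n'(x_i)|},\qquad\int_{\RR_+}|\varphi_n'|^p\,\mathrm{d}x\;=\;\int_0^{+\infty}\sum_{i=1}^{N(\nu)}|\varphi_n'(x_i)|^{p-1}\,\mathrm{d}\nu.$$
Since $\varphi_n^\ast$ is the inverse of $\mu_n$ on $(0,\max\varphi_n)$, it is absolutely continuous with $|(\varphi_n^\ast)'(\mu_n(\nu))|=1/|\mu_n'(\nu)|$; the change of variables $s=\mu_n(\nu)$ then gives
$$\int_{\RR_+}\bigl|(\varphi_n^\ast)'\bigr|^p\,\mathrm{d}s\;=\;\int_0^{+\infty}\frac{\mathrm{d}\nu}{|\mu_n'(\nu)|^{p-1}}.$$

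Comparing the two right-hand sides, the inequality reduces to the pointwise algebraic estimate
$$\sum_{i=1}^{N}a_i^{p-1}\;\geq\;\Bigl(\sum_{i=1}^{N}\tfrac{1}{a_i}\Bigr)^{-(p-1)},\qquad a_i=|\varphi_n'(x_i)|,$$
which is exactly the power-mean inequality $M_{p-1}(a_1,\dots,a_N)\geq M_{-1}(a_1,\dots,a_N)$ (valid for $p\geq1$) combined with $N\geq 1$; the case $p=1$ degenerates to the trivial $N\geq 1$. Integrating in $\nu$ concludes the inequality for $\varphi_n$, and one passes to the limit $n\to\infty$ using the $L^p$-continuity of the Schwarz rearrangement \cite{AlmgrenLieb}\cite{Coron} together with the lower semi-continuity of the $\dot{W}^{1,p}$-seminorm under $L^p$-convergence (which simultaneously yields $\varphi^\ast\in\dot{W}^{1,p}_+$).

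The main technical obstacle will be the handling of \emph{plateaus} of $\varphi$, i.e. levels $\nu$ with $\meas\{\varphi=\nu\}>0$, on which $\mu$ is flat and the coarea identities degenerate. The classical Stampacchia lemma $\varphi'=0$ a.e. on $\{\varphi=\nu\}$ shows that such plateaus contribute zero to both sides of the inequality, so they can be discarded and the Morse-type computation above applies on the set of regular levels, which carries the full mass of the gradient integrals.
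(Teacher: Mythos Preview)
Your argument is the standard coarea--power-mean proof and is correct as written; the density reduction, the 1D coarea identities, the algebraic inequality $\sum a_i^{p-1}\geq\bigl(\sum 1/a_i\bigr)^{-(p-1)}$, and the limiting step are all sound.

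Note however that the paper does \emph{not} prove Theorem~\ref{Polya_Szego}: it is stated in Section~\ref{rearrangement inequalities} as a classical background result and attributed to \cite{PolyaSzego}. The paper's own contribution is the analogous inequality for the \emph{tamping} rearrangement (Theorem~\ref{decrease_W_1_p}), and the method used there is entirely different from yours: instead of coarea, the authors discretise $\varphi$ into voxel functions, track explicitly how the piecewise-linear $\dot{W}^{1,p}$ seminorm changes across one elementary tamping step (only three terms in the sum \eqref{approx_nabla} move, and the resulting ``residual'' is shown to be non-negative via an elementary convexity bound), iterate with a careful bookkeeping of cancellations indexed by the connected components of the hollows, and finally pass to the limit. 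That approach is dictated by the combinatorial definition of the tamping and would be unnecessarily heavy for the Schwarz case; conversely, your coarea argument does not transfer to the tamping because $\varphi^\natural$ is not determined by $\mu(\nu)$ alone.
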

The corresponding inequality also holds in the case of the Steiner rearrangement. The case $p=2$ in the Pólya–Szegő inequality is of particular interest since it involves the energy term associated to a Laplace operator.
\begin{Theo}[Riesz Rearrangement inequality]\label{Riesz} \indent
Let $\varphi,$ $\psi$ and $\chi$ $:\RR\to\RR_+$. Then,
$$\int_{\RR}\int_{\RR}\varphi(x)\,\psi(y)\,\chi(x-y)\,\mathrm{d}x\,\mathrm{d}y\leq\int_{\RR}\int_{\RR}\varphi^\sharp(x)\,\psi^\sharp(y)\,\chi^\sharp(x-y)\,\mathrm{d}x\,\mathrm{d}y.$$
\end{Theo}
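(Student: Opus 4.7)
The plan is to reduce the inequality to its analogue for indicator functions via the layer-cake representation \eqref{layercake}, and then to establish the resulting set-level inequality by a one-dimensional symmetrization argument.

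\textbf{Step 1: Reduction to indicator functions.} Writing each of the three non-negative functions as $\int_0^\infty\mathbbm{1}_{\{\cdot\geq\nu\}}\,\mathrm{d}\nu$ and applying Fubini gives
$$\int_{\RR}\int_{\RR}\varphi(x)\,\psi(y)\,\chi(x-y)\,\mathrm{d}x\,\mathrm{d}y\;=\;\int_0^\infty\!\int_0^\infty\!\int_0^\infty I\big(\{\varphi\!\geq\!\alpha\},\{\psi\!\geq\!\beta\},\{\chi\!\geq\!\gamma\}\big)\,\mathrm{d}\alpha\,\mathrm{d}\beta\,\mathrm{d}\gamma,$$
where, for measurable subsets $A,B,C\subset\RR$ of finite measure,
$$I(A,B,C)\;:=\;\int_{\RR}\int_{\RR}\mathbbm{1}_A(x)\,\mathbbm{1}_B(y)\,\mathbbm{1}_C(x-y)\,\mathrm{d}x\,\mathrm{d}y.$$
Since the superlevel sets of $\varphi^\sharp$ coincide up to null sets with the symmetric rearrangements of the superlevel sets of $\varphi$ (and similarly for $\psi,\chi$), the theorem reduces to the set-level inequality $I(A,B,C)\leq I(A^\sharp,B^\sharp,C^\sharp)$.

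\textbf{Step 2: The set-level inequality for finite unions of intervals.} After the change of variables $(u,v)=(x,x-y)$,
$$I(A,B,C)\;=\;\int_{\RR}\mathbbm{1}_C(v)\,\meas\!\big(A\cap(B+v)\big)\,\mathrm{d}v.$$
When $A$ and $B$ are single intervals, $v\mapsto\meas(A\cap(B+v))$ is a trapezoidal bump symmetric about $v_0=\text{center}(A)-\text{center}(B)$, and a direct elementary computation shows that, for fixed measures, its integral against $\mathbbm{1}_C$ is maximized when $A,B,C$ are all centered at the origin. For the case of finite disjoint unions of open intervals, I would proceed by a \emph{sliding lemma} (the one-dimensional avatar of Steiner symmetrization): construct a continuous family $A_t$, $t\in[0,T]$, with $A_0=A$ and $A_T=A^\sharp$, obtained by translating each component interval of $A$ toward the origin at equal speed and merging two adjacent components as soon as they collide, and prove that $t\mapsto I(A_t,B,C)$ is non-decreasing. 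Running the same procedure in turn on $B$ and on $C$ yields the inequality for all triples of finite unions of intervals.

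\textbf{Step 3: Approximation and conclusion.} By the inner regularity of Lebesgue measure, indicators of arbitrary measurable sets of finite measure are approximated in $L^1(\RR)$ by indicators of finite disjoint unions of open intervals. The functional $I$ is Lipschitz in each argument for the symmetric-difference metric, via the elementary bound $|I(A,B,C)-I(A',B,C)|\leq\min(|B|,|C|)\cdot|A\triangle A'|$, and symmetric rearrangement is itself $L^1$-continuous on indicators of finite-measure sets; the set-level inequality therefore passes to the limit, and integrating in $(\alpha,\beta,\gamma)$ via Step 1 finishes the proof.

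\textbf{Main obstacle.} The delicate point is the monotonicity of $t\mapsto I(A_t,B,C)$ in Step 2: at each collision time where two components of $A_t$ merge, the combinatorial structure changes and one must verify both a smooth-monotonicity statement on the open intervals between collisions (which reduces to the trapezoidal computation of the single-interval case applied to each pair of consecutive components) and continuity of $I$ through the collisions. A technically cleaner alternative is to replace the sliding argument by the polarization technique of Baernstein and Brock--Solynin: show that $I(\varphi^h,\psi^h,\chi^h)\geq I(\varphi,\psi,\chi)$ for every point-polarization $^h$ of $\RR$, and then invoke the fact that a well-chosen sequence of iterated polarizations converges in $L^p$ to the symmetric-decreasing rearrangement, bypassing entirely the combinatorics of moving intervals.
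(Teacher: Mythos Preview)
The paper does not actually prove Theorem~\ref{Riesz}; it is stated as background in Section~\ref{rearrangement inequalities} together with the Hardy--Littlewood and P\'olya--Szeg\H{o} inequalities, and the proof is simply referred to the original source \cite{RieszInequality}. There is therefore no ``paper's own proof'' to compare your attempt against.

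That said, your outline is a faithful rendition of the classical strategy: the layer-cake reduction of Step~1 and the approximation argument of Step~3 are standard and correct as written, and your Lipschitz estimate on $I$ is exactly the right tool for passing to the limit. The genuine content lies entirely in Step~2, and you are right to flag the monotonicity of $t\mapsto I(A_t,B,C)$ as the crux. As stated, though, Step~2 is only a plan: the assertion that sliding all components of $A$ toward the origin at equal speed makes $I(A_t,B,C)$ non-decreasing for \emph{arbitrary} fixed $B,C$ is not something one can simply read off the single-interval trapezoidal computation, and the merging events require a separate continuity check that you have not carried out. Your suggested alternative via polarization is indeed the cleaner route and avoids precisely this combinatorial bookkeeping; if you want a self-contained argument, that is the one to write out in full.
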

This inequality can be considered as being a generalization of the Hardy-Littlewood inequality \ref{Hardy_Littlewood} if we choose, at least formally, the function $\chi$ to be equal to the Dirac mass centered at $0$. An important consequence of this inequality is the fact that for $s\in(0,1)$ we have,
\begin{equation}\big|\varphi^\sharp\big|_{H^s}\leq\;\big|\varphi\big|_{H^s}.\end{equation}
Here the $H^s$ half-norms are defined by
\begin{equation*}
|\varphi|_{H^s}^2\;:=\;\int_\RR\int_\RR\frac{|\varphi(x)-\varphi(y)|^2}{|x-y|^{2(1-s)}}\,\mathrm{d}x\,\mathrm{d}y.
\end{equation*}

\subsection{Limitation of the Schwarz rearrangement, preserving Dirichlet boundary condition}

\subsubsection{The problem of tamping in dimension $1$}\label{problem_tamping_1D}
\label{sec:problem of tamping}
However useful the Schwarz rearrangement may be, it does not work if we want to impose a Dirichlet boundary condition at $0$  because such a condition is in general not respected after a Schwarz rearrangement. In the case of a Dirichlet boundary condition, the solution is expected to be unimodal instead of non-increasing, where by "unimodal" we mean non-decreasing on an interval $[0,s]$ and then non-increasing on $[s,+\infty)$ for a certain $s\in\RR_+$. In short, we want to build a rearrangement that
\begin{itemize}
\item Satisfies a Pólya–Szegő inequality (Theorem \ref{Polya_Szego}),
\item Satisfies a Schwarz rearrangement inequality (Property \ref{Schwarz_propr}),
\item Gives unimodal functions (non-decreasing then non-increasing),
\item Preserves the Dirichlet boundary condition at $0$.
\end{itemize}

The main observation that starts this work is the following intuitive explanation of why the Schwarz rearrangement verifies the Pólya–Szegő inequality. It can be said, roughly speaking, that when we apply the Schwarz rearrangement, the mass of the function is pushed all the way down to the origin and when this movement of mass is done, all \emph{the ``hollows'' of the function are filled up} (we mean that all the strict local minima have disappeared after the Schwarz rearrangement). The idea that the Pólya–Szegő inequality is a consequence of the fact that the ``hollows'' are filled up, is the guiding idea of our construction. The spirit of the rearrangement by tamping is that the mass has to be moved in order to \emph{``fill all the hollows''} but we must not move this mass \emph{``too far''}, because this implies the loss of the Dirichlet boundary condition.

\begin{figure}[h]\centering\centering
\includegraphics[width=12.5cm]{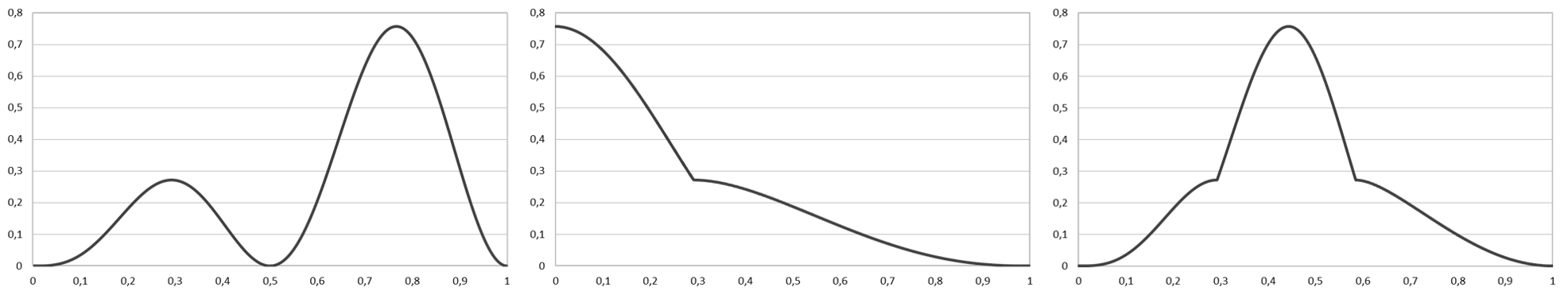}
\caption{\label{xsin2x} From left to right: the graph of the function $x\longmapsto x.\sin^2(2\pi x).\mathbbm{1}_{[0,1]}(x)$, the graph of the Schwarz rearrangement of this function and the graph of the rearrangement by tamping.}
\end{figure} 

As an illustration, Figure \ref{xsin2x} shows the graph of $x\longmapsto x.\sin^2(2\pi x).\mathbbm{1}_{[0,1]}(x)$, the graph of the Schwarz non-increasing rearrangement of this function and the graph of the rearrangement by tamping that we are introducing in this article.

\subsubsection{The problem of tamping in dimension $2$}\label{problem_tamping_2D}
Although our construction is so far limited to functions of one variable, the problem of tamping in dimension $2$ can still be enunciated. It consists in building a rearrangement for non-negative functions defined on $\RR_+\times\RR$ that satisfies a 2D-analogous of the four conditions given at Section \ref{problem_tamping_1D} for the 1D case.
\begin{itemize}
\item The extension of the Pólya–Szegő inequality in dimension two consists in replacing the derivative of the functions by a gradient.
\item For a problem on $\RR_+\!\times\!\RR$, the Schwarz rearrangement inequality becomes
$$\forall\;x\in\RR_+,\quad\meas\Big([0,x]\!\times\!\RR\;\cap\;\{\varphi\geq\nu\}\Big)\;\leq\;\meas\Big([0,x]\!\times\!\RR\;\cap\;\{\varphi^\ast\geq\nu\}\Big),$$
where here \emph{meas} refers to the Lebesgue measure on $\RR^2$.
\item In dimension $2$, the unimodal functions are the functions whose set of local maxima is a connected set (they are just ``\emph{one bump}'').
\item On $\RR_+\!\times\!\RR$, the Dirichlet boundary condition is set on $\{0\}\!\times\!\RR.$
\end{itemize}

It must be said that the result obtained in dimension $1$ for the problem of tamping cannot be immediately extended to the problem of tamping in dimension $2$ because this rearrangement does not show good properties of tensorization.

\section{Definition of the rearrangement by tamping}

\subsection{Definition of the tamping on voxel functions}\label{subsection21}
At first we define the rearrangement by tamping on a special subclass of the piece-wise constant functions called the \emph{voxel functions}. The general case for functions in $\mfM_+(\RR_+)$ will be treated later.
The definition of the tamping rearrangement on voxel functions is given by an algorithm separated into two algorithms. 
We first define the \emph{elementary tamping}. The general algorithm of tamping then consists in an iteration of the elementary tamping algorithm. Before all, we have to define what is a voxel function. 

\subsubsection{The voxel functions}\label{voxel_things}

Let $n\in\NN^\ast$ and let $\Gamma:\{1,\cdots,n\}^2\longrightarrow\{0,1\}$ a function being non-increasing with respect to its second variable, in other words 
\begin{equation}
\forall\; 1 \leq i, j, k \leq n,\ \left.\begin{array}{c}k\leq j\\
\;\Gamma(i,j)=1\end{array}\;\right\}\Longrightarrow\;\Gamma(i,k)=1.
\label{Gamma_defines_function}\end{equation}
For convenience we extend the function $\Gamma$ to $\NN^2$ by setting its values to zero outside of $\{1,\cdots,n\}^2.$ We define the \emph{voxels} (associated to $\Gamma$) as being the sets
\begin{equation}\label{voxel_1D}a_\Gamma(i,j):=\left\{\begin{array}{cl}\left[i-\frac{1}{2},\,i+\frac{1}{2}\right]\times\left[j-1,\,j\right] & \quad\mathrm{if}\;\;\Gamma(i,j)=1,\\ \\
\emptyset & \quad\mathrm{if}\;\;\Gamma(i,j)=0,\end{array} \right. \end{equation}
and then the \emph{voxel pile} 
\begin{equation}
A_\Gamma:=\bigcup_{i,j}\;a_\Gamma(i,j)\quad\subset\RR^2.
\label{hypograph_Gamma}\end{equation}
If we only consider the reunion on $j$ then the obtained set is referred as being the \emph{voxel column} at abscissa $i$ and if on the contrary we consider the reunion on the index $i$ then we get the \emph{voxel line} at ordinate $j$.
Condition \eqref{Gamma_defines_function} implies that the set $A_\Gamma$ is the hypograph of a certain piece-wise constant function $\varphi_\Gamma:\RR\longrightarrow\RR_+$. The set of piece-wise constant functions that can be defined in such a way is noted $\cE_n(\RR)$, and its elements are called \emph{voxel functions}.

\subsubsection{The elementary tamping algorithm}
For $\xi\in\NN$ one defines the function:
\begin{equation}\label{define_eta}
\eta(\xi):=\min\left\{\eta\in\NN^\ast\;:\;\xi\leq\eta\quad\mathrm{and}\quad \varphi(\eta-1)>\varphi(\xi)\geq \varphi(\eta)\right\}.
\end{equation}
Roughly speaking, the \emph{elementary tamping algorithm} consists in moving one step on the left the mountain that is between $\xi$ and $\eta(\xi)$ (see Figure \ref{elementary_tamping_1D}). This $\xi$ will often be referred as being the \emph{``pivot''} of the elementary tamping algorithm.
\begin{Defi} \indent
The \emph{elementary tamping} associated to a given pivot $\xi$ consists in modifying the function $\Gamma$ following the algorithm:
\newline\indent\indent\qquad for $j$ from $\varphi(\xi)+1$ to $n$ do:
\newline\indent\indent\qquad\qquad for $i$ from $\xi$ to $\eta(\xi)-1$ do:
\newline\indent\indent\qquad\qquad\qquad $\Gamma(i,j)\;\longleftarrow\;\Gamma(i+1,j)$
\end{Defi}
\begin{figure}[h]\centering
\includegraphics[width=12cm]{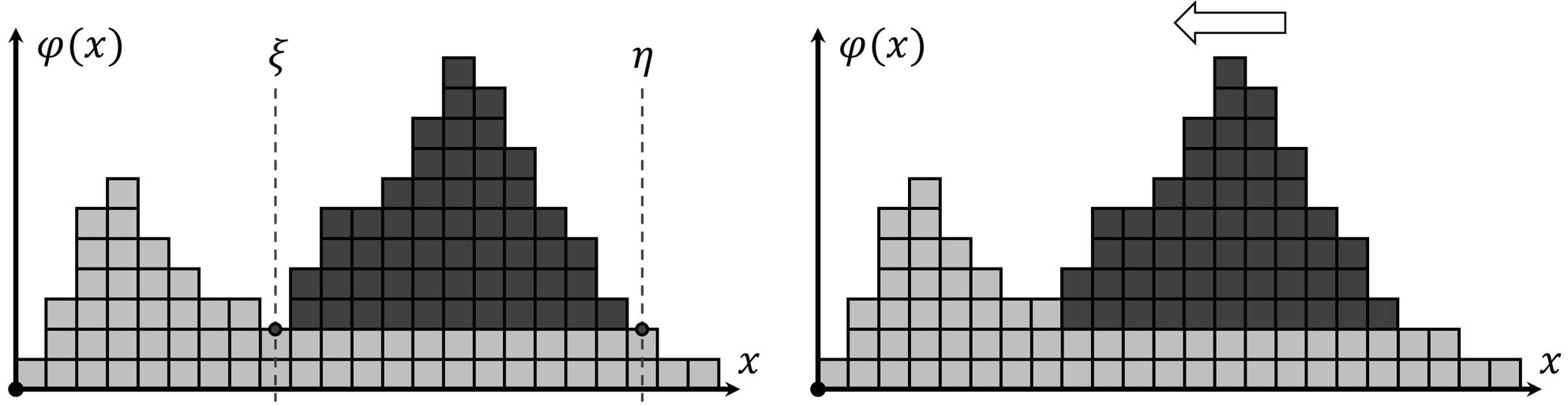}
\caption{\label{elementary_tamping_1D} The elementary tamping algorithm on a voxel function: the red voxels are slid one step to the left.}
\end{figure}
Once this definition is set, it is important to check that the new function $\Gamma$ that we get after the elementary tamping still verifies Property \eqref{Gamma_defines_function}. This is crucial because we want the function $\Gamma$ to still define a function $\varphi_\Gamma$.
\begin{Lemm} \indent\label{defines_a_new_function}
The elementary tamping preserves Property given by \eqref{Gamma_defines_function}.
\end{Lemm}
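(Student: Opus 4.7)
The plan is to verify condition \eqref{Gamma_defines_function} for the updated function $\widetilde{\Gamma}$ by a direct case analysis. First I unwind the algorithm: since the inner loop proceeds left-to-right and each assignment reads $\Gamma(i+1,j)$ before that cell has been modified, the net effect is the explicit rule $\widetilde{\Gamma}(i,j)=\Gamma(i+1,j)$ when $(i,j)\in\{\xi,\dots,\eta(\xi)-1\}\times\{\varphi(\xi)+1,\dots,n\}$, and $\widetilde{\Gamma}(i,j)=\Gamma(i,j)$ otherwise. I will then fix indices $k\leq j$ with $\widetilde{\Gamma}(i,j)=1$ and verify that $\widetilde{\Gamma}(i,k)=1$.

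For columns $i$ outside $\{\xi,\dots,\eta(\xi)-1\}$ the claim is immediate because that column is unchanged. For a column $i$ inside the affected range, I split according to the position of $j$ and $k$ with respect to the threshold $\varphi(\xi)$. When $j\leq\varphi(\xi)$, both entries come from column $i$ of the old $\Gamma$; when $k>\varphi(\xi)$, both come from column $i+1$ of the old $\Gamma$. In each of these two sub-cases the claim reduces instantly to condition \eqref{Gamma_defines_function} for the original $\Gamma$. The genuinely new situation is the mixed one $k\leq\varphi(\xi)<j$: here $\widetilde{\Gamma}(i,j)=\Gamma(i+1,j)=1$, so $\varphi(i+1)\geq j>\varphi(\xi)$, and I need $\widetilde{\Gamma}(i,k)=\Gamma(i,k)=1$, which (since $k\leq\varphi(\xi)$) amounts to showing $\varphi(i)\geq\varphi(\xi)$.

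This reduces the whole lemma to a monotonicity statement internal to the proof, which I expect to be the main obstacle: for every $i\in\{\xi,\dots,\eta(\xi)-1\}$ such that $\varphi(i+1)>\varphi(\xi)$, one has $\varphi(i)\geq\varphi(\xi)$. I plan to prove this by contradiction using the minimality of $\eta(\xi)$ in \eqref{define_eta}. Supposing a "hole" $\varphi(i)<\varphi(\xi)<\varphi(i+1)$ with $\xi\leq i<\eta(\xi)$ and picking the smallest such $i$, I would track where $\varphi$ first strictly exceeds $\varphi(\xi)$ on $\{\xi,\dots,i+1\}$; examining the first descent back to the level $\varphi(\xi)$ produces a candidate $\eta'\leq i+1<\eta(\xi)$ satisfying $\varphi(\eta'-1)>\varphi(\xi)\geq\varphi(\eta')$, contradicting the minimality of $\eta(\xi)$. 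Once this monotonicity is secured, reassembling the three sub-cases above yields \eqref{Gamma_defines_function} for $\widetilde{\Gamma}$ and closes the proof.
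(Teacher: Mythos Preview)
Your argument is correct, but it takes a genuinely different route from the paper. The paper does not do a case analysis on indices; instead it reinterprets the elementary step as a permutation of \emph{entire} vertical strips (remove the strip at $\xi$, translate the strips at $\xi+1,\dots,\eta(\xi)-1$ one unit to the left, reinsert the removed strip at the vacated slot). A permutation of columns trivially preserves~\eqref{Gamma_defines_function}, so the lemma follows as soon as the two descriptions of the step are seen to coincide. That coincidence hinges on exactly the fact your ``monotonicity'' step isolates, namely $\varphi(i)\geq\varphi(\xi)$ for every $i\in\{\xi,\dots,\eta(\xi)-1\}$: only then do the bottom $\varphi(\xi)$ rows of columns $i$ and $i+1$ agree, so that shifting only the rows above $\varphi(\xi)$ (the Lebesguian description) is the same as shifting whole columns (the Riemannian one). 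Your approach is more elementary and self-contained; the paper's buys a second algorithmic description which is reused later, most importantly in the proof of the P\'olya--Szeg\H{o} inequality (Theorem~\ref{decrease_W_1_p}).

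One point to make explicit when you write this up: your contradiction step silently uses $\varphi(\xi+1)>\varphi(\xi)$, i.e.\ $\xi\in\underline{M}$. Indeed, if the ``first strict exceed'' of $\varphi(\xi)$ on $\{\xi,\dots,i+1\}$ occurs only at $i+1$, there is no descent left to examine and the argument stalls; the hypothesis $\xi\in\underline{M}$ forces the first exceed to be at $\xi+1\leq i$, after which $\varphi(i)<\varphi(\xi)$ guarantees a descent at some $\eta'\leq i<\eta(\xi)$. This is not an artifact of your method: without $\xi\in\underline{M}$ the lemma itself is false (take $\varphi(\xi),\varphi(\xi+1),\dots=5,3,2,4,8,1$ and check the column at $\xi+3$ after the step), and the paper's equivalence-of-algorithms claim needs the same assumption. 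Since the tamping algorithm only ever picks pivots in $\underline{N}\cap\underline{M}$, this is harmless, but it should be stated.
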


\begin{proof} \indent
To understand what happens during one step of the algorithm, we consider the hypograph of the function as being a collection of vertical strips of width 1 but of different height. 
\begin{figure}[h]\centering
\includegraphics[width=10cm]{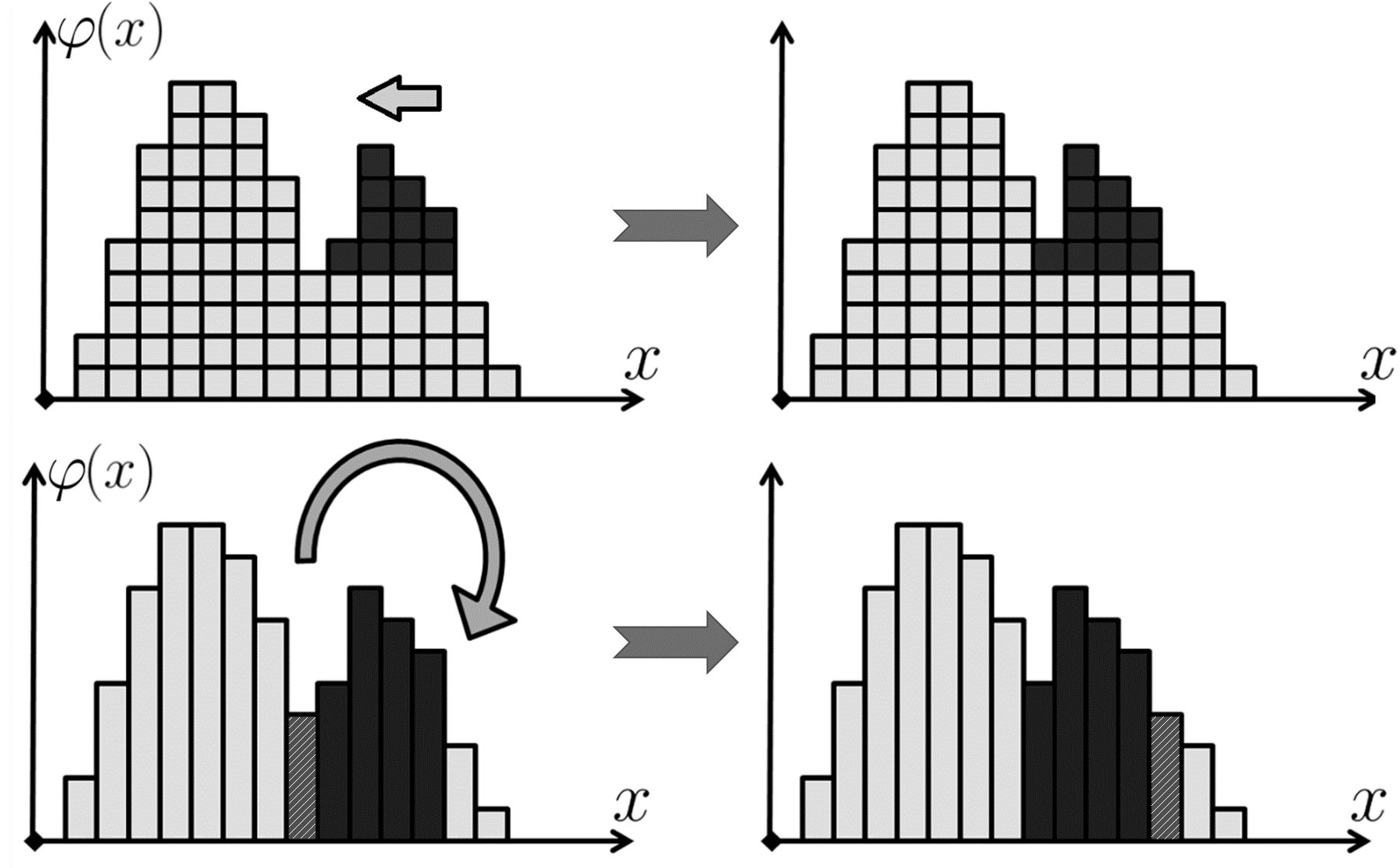}
\caption{\label{tamping_equivalence} The equivalence of the two algorithms.
Top: Lebesguian point of view on the tamping.
Bottom: Riemannian point of view on the tamping.
}
\end{figure}\newline
A step of the algorithm can be reformulated as the following (see Figure \ref{tamping_equivalence}):
\begin{enumerate}
\item The strip at $\xi$ is removed. There is now a hollow (by hollow, we mean a missing strip) at $\xi$.
\item All the strips between $\xi+1$ and $\eta$ are translated one step to the left. Therefore, the hollow is now between $\eta$ and $\eta+1$. 
\item The hollow is filled up with the strip that we removed at Step 1.
\end{enumerate}

We can see that both algorithms are equivalent but with this version it is clear that Property \eqref{Gamma_defines_function} is preserved during the tamping process.
\end{proof}
We have now two equivalent algorithms to define the elementary step of the tamping algorithm. We call the first point of view the \emph{Lebesguian} point of view and the second one is called the \emph{Riemannian} point of view.

\subsubsection{The tamping algorithm}
\begin{Defi}\label{1D_tamping_algorithm} \indent
Once the elementary tamping process is defined, the \emph{tamping process}, also called \emph{rearrangement by tamping}, is defined by the following algorithm:
\begin{enumerate}
\item Define the sets:
\begin{align}\label{define_N}
&\underline{M}:=\left\{x\in\NN\;:\;\varphi(x)<\varphi(x+1)\right\},\\
&\underline{N}:=\left\{x\in\NN\;:\;\exists\;t\in\NN,\; t < x,\;\forall s\in\Big]t+\frac{1}{2},\;x\Big[,
\;\varphi(t)>\varphi(s)\right\}.
\end{align}
\item If the set $\underline{N}\bigcap\underline{M}$ is empty, the algorithm ends.
\item Otherwise, choose a pivot $\xi\in \underline{N}\bigcap\underline{M}$.
\item Apply the elementary tamping associated to this pivot $\xi$.
\item Restart the algorithm at Step 1.
\end{enumerate}
\end{Defi}
The result of this algorithm is illustrated at Figure \ref{tamping_1D}. The set $\underline{N}\bigcap\underline{M}$ must be understood as the set of all the strict local minima of the function $\varphi$. Its definition comes from the fact that the classical definition for local minima makes no sense for piece-wise constant functions although this is the kind of concept we need. Similarly is defined the set $\overline{N}\bigcap\overline{M}$ of the strict local maxima for $\varphi$ with
\begin{align}
&\overline{M}:=\left\{x\in\NN^\ast\;:\;\varphi(x-1)<\varphi(x) \right\},\\
&\overline{N}:=\left\{x\in\NN^\ast\;:\;\exists\;t\in\NN,\; t > x,\; \forall s\in\Big]x,\;t-\frac{1}{2}\Big[,
\;\varphi(t)<\varphi(s) \right\}.\label{define_N_2}
\end{align}
\begin{Lemm} \indent\label{convergence_algorithm}
This algorithm converges in a finite number of iterations.
\end{Lemm}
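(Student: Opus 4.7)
The plan is to exhibit a quantity that strictly changes in a monotone way under each elementary tamping and takes values in a finite set, which forces the iteration to stop in finitely many steps.

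First I would adopt the Riemannian point of view and encode a voxel function $\varphi$ by the tuple of its column heights
$$H(\varphi)\;:=\;\bigl(\varphi(1),\varphi(2),\ldots,\varphi(n)\bigr)\;\in\;\{0,1,\ldots,n\}^n,$$
equipped with the lexicographic order $\prec_{\mathrm{lex}}$. This set is finite, hence any strictly $\prec_{\mathrm{lex}}$-increasing sequence in it must stabilise after finitely many terms.

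Next I would check that one elementary tamping with pivot $\xi\in\underline{N}\cap\underline{M}$ replaces $H(\varphi)$ by a strictly larger tuple in the lexicographic order. Inspection of the inner loop $\Gamma(i,j)\leftarrow\Gamma(i+1,j)$ shows that $\Gamma$ is only modified for indices $i\in\{\xi,\ldots,\eta(\xi)-1\}$; in particular columns $1,\ldots,\xi-1$ are untouched, so the first component of $H$ that can change is the $\xi$-th. In the new column $\xi$, the values at heights $j\leq\varphi(\xi)$ are preserved while values at heights $j>\varphi(\xi)$ coincide with those of the old column $\xi+1$; by the monotonicity property \eqref{Gamma_defines_function} these are $1$ precisely for $j\leq\varphi(\xi+1)$, so the new column $\xi$ has height exactly $\varphi(\xi+1)$. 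Since $\xi\in\underline{M}$, we have $\varphi(\xi+1)>\varphi(\xi)$ strictly, which is the required strict lexicographic increase. The main loop of Definition \ref{1D_tamping_algorithm} can therefore only be entered finitely many times and must ultimately exit through the test $\underline{N}\cap\underline{M}=\emptyset$.

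The one point that requires care is justifying that the new height of column $\xi$ is \emph{exactly} $\varphi(\xi+1)$. This relies on the fact that $\eta(\xi)>\xi$ (so that the inner loop is non-empty, immediate from $\xi\in\underline{M}$) together with the structural constraint \eqref{Gamma_defines_function} being preserved by the tamping, already established in Lemma \ref{defines_a_new_function}. Everything else, including the harmless side effect that columns $\xi+1,\ldots,\eta(\xi)$ are rewritten simultaneously, plays no role in the argument, since only the position of the first coordinate change in $H$ matters.
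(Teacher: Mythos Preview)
Your argument is correct but takes a different route from the paper's. The paper's proof is one line: the non-negative integer $N_\Gamma := \sum_{i,j} i\,\Gamma(i,j)$ strictly decreases at each elementary tamping, since every voxel that moves slides one unit to the left and none to the right. You instead track the tuple $H(\varphi)=(\varphi(1),\ldots,\varphi(n))$ of column heights under lexicographic order and show it strictly increases. The paper's monovariant is more compact and its strict decrease is immediate from the Riemannian picture without computing any new column height; your variant requires identifying the first column that changes and its new value, which you do correctly, and in exchange yields slightly finer information about the effect of a single step on the pivot column. One small remark: your claim that $\eta(\xi)>\xi$ is ``immediate from $\xi\in\underline{M}$'' is not quite right as stated, since $\underline{M}$ alone does not rule out $\varphi(\xi-1)>\varphi(\xi)$ and hence $\eta(\xi)=\xi$; but the paper's one-line proof relies on the same non-triviality of the elementary step, so this is a wrinkle in the paper's definitions rather than a gap specific to your approach.
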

\begin{proof} \indent
The non-negative integer $\displaystyle N_\Gamma:=\sum_{i,j}\;i\;\Gamma(i,j)$ is decreasing at every iteration.
\end{proof}
Given a boolean function $\Gamma$ and $\varphi_\Gamma\in\cE_n(\RR)$, one notes $\Gamma^\natural$ the boolean function given by the previous algorithm when initialized with the function $\Gamma$. This function is well defined because of the following uniqueness property.
\begin{Lemm} \indent\label{unicity}
The boolean function $\Gamma^\natural$ does not depend on the choice of $\xi$ at every iteration of the algorithm.
\end{Lemm}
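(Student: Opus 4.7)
The plan is to invoke Newman's lemma (the diamond lemma for terminating rewriting systems). By Lemma~\ref{convergence_algorithm}, the algorithm terminates in finitely many iterations, so it suffices to prove local confluence: for any state $\Gamma$ and any two valid pivots $\xi_1,\xi_2\in\underline{N}\cap\underline{M}$, the two states $\Gamma_1,\Gamma_2$ obtained after one elementary tamping at $\xi_1$, resp.\ at $\xi_2$, can both be reduced by further valid elementary tampings to a common state. Combined with termination, local confluence yields global confluence, and hence the uniqueness of the normal form $\Gamma^\natural$.

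The main tool I would use is the following explicit description of the elementary tamping on the sequence of column heights, extracted from the Riemannian viewpoint of Lemma~\ref{defines_a_new_function}: the tamping at $\xi$ leaves every column of index $<\xi$ or $\geq\eta(\xi)$ unchanged, while on the range of indices $[\xi,\eta(\xi)-1]$ it performs one cyclic left-shift of the heights. Concretely, the height $\varphi(\xi+k)$ moves to position $\xi+k-1$ for $1\leq k\leq\eta(\xi)-\xi-1$ and $\varphi(\xi)$ ends up at position $\eta(\xi)-1$, the column of index $\eta(\xi)$ being untouched. Mass is evidently preserved, and Lemma~\ref{defines_a_new_function} ensures that the pile remains a hypograph.

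Local confluence is then established by case analysis on the relative positions of $\xi_1<\xi_2$ with respect to $\eta(\xi_1)$. In the \emph{disjoint} case $\eta(\xi_1)\leq\xi_2$, the two tampings act on disjoint column ranges and commute, and both pivots stay valid with unchanged values of $\eta$ in the two intermediate states, so applying the other tamping reaches the common state in one step. In the \emph{nested} case $\xi_1<\xi_2<\eta(\xi_1)$, one first observes that $\eta(\xi_2)\leq\eta(\xi_1)$: by the minimality in~\eqref{define_eta}, $\varphi$ stays strictly above $\varphi(\xi_1)$ on $(\xi_1,\eta(\xi_1))$, while $\varphi(\eta(\xi_1))\leq\varphi(\xi_1)<\varphi(\xi_2)$, so the drop defining $\eta(\xi_2)$ must occur inside $(\xi_2,\eta(\xi_1)]$. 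The two cyclic shifts can then be shown to commute after an index translation: tamping at $\xi_1$ followed by tamping at the translated pivot $\xi_2-1$ produces the same voxel function as tamping at $\xi_2$ followed by tamping at $\xi_1$, whose $\eta$ is unchanged because the inner tamping does not modify the columns in $[\xi_1,\xi_2-1]$.

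The main obstacle is the bookkeeping in the nested case, specifically the verification that after one tamping the prescribed follow-up pivot still lies in $\underline{N}\cap\underline{M}$ for the new state, which requires re-examining the definitions~\eqref{define_N} under the translation of the height vector by the cyclic shift. I would split the nested case into the subcase $\eta(\xi_2)=\eta(\xi_1)$, where the two shifts share their right boundary and compose into a single longer cyclic shift, and the subcase $\eta(\xi_2)<\eta(\xi_1)$, where the inner tamping leaves the outer drop at $\eta(\xi_1)$ intact so that $\xi_1$ remains a valid pivot in $\Gamma_2$ with unchanged $\eta(\xi_1)$. Once local confluence is proved, Newman's lemma together with Lemma~\ref{convergence_algorithm} yields the uniqueness of the normal form.
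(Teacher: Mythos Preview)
Your approach via Newman's lemma is sound and would lead to a correct proof, but it is genuinely different from what the paper does. The paper does not argue by local confluence at all: instead, it postpones the proof and derives uniqueness as an immediate corollary of Proposition~\ref{L^p_tamping}. That proposition gives a closed formula for the super-level sets of the output,
\[
\{\varphi^\natural\geq\nu\}=\big[\,y_\nu(\varphi),\;y_\nu(\varphi)+\meas\{\varphi\geq\nu\}\,\big],
\]
where $y_\nu(\varphi)$ is defined purely in terms of the initial data $\varphi$ (via $x_\nu(\varphi)$ and the hollows $\cH_\infty(\varphi)$). Since the proof of Proposition~\ref{L^p_tamping} tracks how far each voxel slides during the full algorithm and shows this distance is determined by the initial configuration alone, the terminal state is fixed independently of the pivot choices, and Lemma~\ref{unicity} follows for free.

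The trade-off is clear. Your route is self-contained and does not forward-reference later material; it also makes transparent \emph{why} the order of tampings is irrelevant at the level of individual moves. The cost is the case analysis you flag: verifying that the follow-up pivot remains in $\underline{N}\cap\underline{M}$ after a cyclic shift, and that $\eta$ behaves as claimed, is genuine bookkeeping (the subcase $\xi_2=\eta(\xi_1)-1$ and the check that the $\underline{N}$ witness survives the permutation of heights both need care). The paper's route avoids all of this by computing the answer directly, which is especially economical here because the formula is needed anyway to extend the tamping from voxel functions to $\mfM_+(\RR_+)$.
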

\begin{proof} \indent
This fact is a consequence of Proposition \ref{L^p_tamping} that we prove later.\newline
\end{proof}
The tamped-function, also called the rearrangement by tamping of $\varphi_\Gamma$, that we note $\left(\varphi_\Gamma\right)^\natural$, is defined by the natural definition
\begin{equation}\label{definition_tamping}
\left(\varphi_\Gamma\right)^\natural:=\varphi_{\Gamma^\natural}.
\end{equation}
This definition naturally extends to the functions of the set of the dilatations of the voxel functions
\begin{equation}\label{extention_of_E_n}
\widetilde{\cE_n}(\RR_+)\;:=\;\Big\{\lambda.\varphi\Big(\frac{\cdot}{\mu}\Big)\;:\;\varphi\in\cE_n(\RR_+),\;\lambda,\mu>0\Big\},
\end{equation}
which is the set of the voxel functions made with voxels of size $\lambda\times\mu$. We set the definition
\begin{equation}\label{first_extention_of the tamping}
\Big[\lambda.\varphi\Big(\frac{\cdot}{\mu}\Big)\Big]^\natural\;:=\;\lambda.\varphi^\natural\Big(\frac{\cdot}{\mu}\Big).
\end{equation}
We also define
$$\widetilde{\cE}(\RR_+):=\bigcup_{n=1}^\infty\widetilde{\cE_n}(\RR_+).$$
\begin{figure}[h!]\centering
\includegraphics[width=12cm]{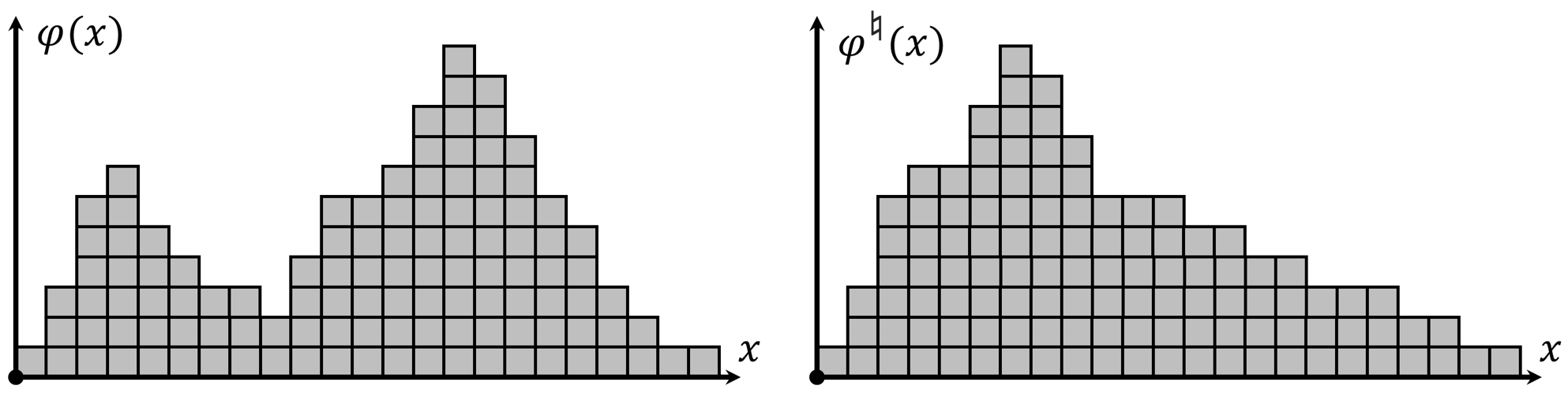}
\caption{\label{tamping_1D} The 1D tamping algorithm on a voxel function.}
\end{figure}

\subsection{Definition of the tamping in $\mfM_+(\RR_+)$.}\label{subsection22}

\subsubsection{Problems of point-wise convergence for the tamping}
The tamping is now defined on voxel functions. It is a well-known result that piece-wise constant functions are a dense subset of the set of measurable functions $\mfM(\RR_+)$ \cite{LiebLoss} for the point-wise convergence almost everywhere and it is always possible to approximate a piece-wise constant function by voxel functions of $\widetilde{\cE_n}(\RR)$. The main objective of this part is to define the tamping for any function in $\mfM_+(\RR_+)$ and the natural way to do it is to pass to the limit in which the size of the voxels tends to zero (and their number tends to $+\infty$). Nevertheless, such an approach cannot work because the tamping has unfortunately very bad continuity properties for the point-wise convergence. For instance if we consider the sequence
$$\varphi_n\;:=\;\mathbbm{1}_{[0,1]}+\mathbbm{1}_{[n,\,n+1]},$$
then we have $\varphi_n$ converging almost everywhere towards $\varphi:=\mathbbm{1}_{[0,1]}$ (which is a fixed point for the tamping) whereas $\varphi_n^\natural$ is converging almost everywhere towards $\mathbbm{1}_{[0,2]}\neq\varphi.$ 

A good way to pass through the difficulties evoked just before is to define the tamping by another approach but that coincides with the previous one on the sets $\widetilde{\cE_n}(\RR)$.
\subsubsection{Definition of the hollows}
To understand well the purpose of what follows, one must keep in mind that the main idea of the tamping is to ``\emph{move the cubes to the left in order to fill all the hollows}''. This key idea is the basis for the generalization of the tamping for all the functions in $\mfM_+(\RR_+)$. The idea behind the notion of \emph{hollows} is to seize the lack of convexity of a given set (see Figure \ref{hollows}, Left). A natural definition for the \emph{hollows} of a set $A$ would be
\begin{equation}\label{fake_define_hollows}\cH(A):=(\conv A)\setminus A.\end{equation}
Nevertheless, such a definition is not well adapted for the manipulation of sets in a context of measure theory and integration with respect to the Lebesgue measure. For instance the convex hull can be completely changed if we add only one point to the set $A$. The objective here is to define a notion of \emph{hollows} that is the analog of the natural idea sketched by \eqref{fake_define_hollows} but defined in such a way that $\cH(A)$ remains unchanged if we modify the set $A$ by a set of measure zero.

\begin{figure}[h]\centering
\includegraphics[width=10cm]{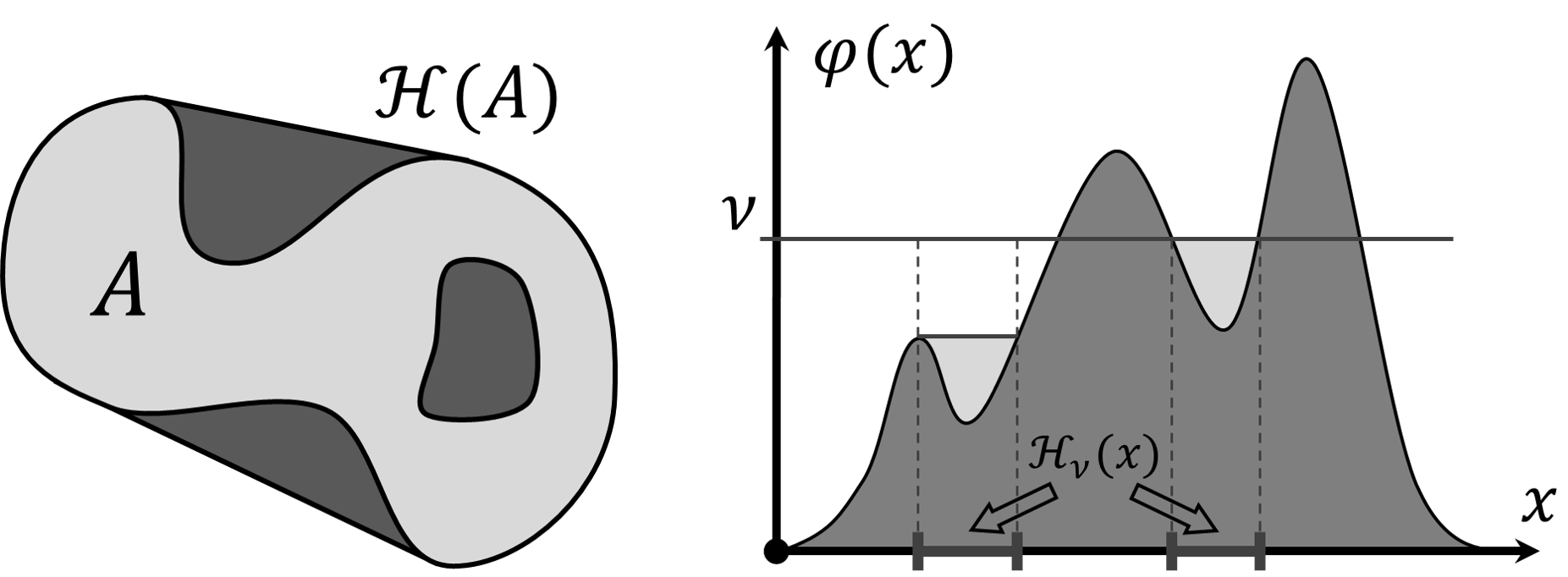}
\caption{\label{hollows} Illustration of the hollows.\newline Left: the hollows of a set $A$. Right: the hollows at level $\nu$ of a function $\varphi$.}
\end{figure} 

\begin{Defi} \indent\label{define_conv_ess}
Let $A$ be a measurable subset of $\RR_+$ (for the Lebesgue measure). We define the \emph{essential convex hull} of the set $A$ by
$$\mathrm{conv\;ess\;}A:=\bigcap_{\meas(A\triangle B)=0}\conv B,$$
where $\triangle$ is the symmetrical difference, $A\triangle B := (A\cup B)\setminus (A\cap B).$
\end{Defi}
This definition allows us to define properly the notion of \emph{hollow} that we adumbrated just before.
\begin{Defi} \indent\label{define_hollows}
Let $A$ be a measurable subset of $\RR$. The \emph{hollows} of the set $A$ is the set defined by 
$$\cH(A):=\bigcup_{\meas(A\triangle B)=0}\left(\conv B\cap\mathrm{conv\;ess\;}A\right)\setminus B.$$
\end{Defi}
This definition naturally extends to functions via the super-level sets (see Figure \ref{hollows}, Right).
\begin{Defi} \indent\label{define_L^p_hollows}
Let $\varphi\in\mfM_+(\RR_+)$. The \emph{hollows} of $\varphi$ at level $\nu$ are the set 
$$\cH_\nu(\varphi):=\bigcup_{\lambda<\nu}\cH\left(\{\varphi\geq\lambda\}\right).$$
\end{Defi}

\subsubsection{A definition of the tamping}
Here after is an equivalent description of the rearrangement by tamping. The main interest of this proposition is that the result above can be used to define the tamping for any function in $\mfM_+(\RR_+)$.
\begin{Prop} \indent\label{L^p_tamping}
Let $\varphi\in\widetilde{\cE_n}(\RR)$. Let us define for all $\nu>0$ \begin{equation}\label{define_x_nu}x_\nu(\varphi):=\mathrm{inf\;ess}\;\{\varphi\geq\nu\}\end{equation} and \begin{equation}\label{define_y_nu}y_\nu(\varphi):=x_\nu(\varphi)-\meas\Big(\cH_\infty(\varphi)\cap[0,x_\nu(\varphi)]\Big).\end{equation}
With these definitions we can describe the superlevel sets of the function $\varphi^\natural$ as
$$\{\varphi^\natural\geq\nu\}=\Big[\;y_\nu(\varphi),\;y_\nu(\varphi)+\meas\{\varphi\geq\nu\}\;\Big].$$
In other words,
$$y_\nu(\varphi)=x_\nu(\varphi^\natural).$$
\end{Prop}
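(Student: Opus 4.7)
The plan is an induction on the non-negative integer $N_\Gamma=\sum_{i,j}i\,\Gamma(i,j)$, which Lemma \ref{convergence_algorithm} shows is strictly decreased by each elementary tamping step; since Lemma \ref{unicity} is stated as a consequence of this proposition, I read the statement as: whatever pivot choices are made, the tamped function has the stated superlevel sets. In the base case $\underline{N}\cap\underline{M}=\emptyset$, no elementary tamping applies and $\varphi^\natural=\varphi$ is unimodal. Each $\{\varphi\geq\nu\}$ is then an interval with essential infimum $x_\nu(\varphi)$, and $\varphi$ is non-decreasing on $[0,x_\nu(\varphi)]$; using the running-maximum description of $\cH_\infty(\varphi)$ (a point $y<x_\nu(\varphi)$ is a hollow iff $\sup_{s\leq y}\varphi(s)>\varphi(y)$), I conclude that $\meas\bigl(\cH_\infty(\varphi)\cap[0,x_\nu(\varphi)]\bigr)=0$, so $y_\nu(\varphi)=x_\nu(\varphi)$ and the formula holds.

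For the induction step, pick any $\xi\in\underline{N}\cap\underline{M}$ and let $\varphi_1$ be the result of one elementary tamping with pivot $\xi$; since $N_{\Gamma_1}<N_\Gamma$, the inductive hypothesis applies to $\varphi_1$, and the continuation of the algorithm from $\varphi_1$ is exactly the algorithm applied to $\varphi$ with $\xi$ chosen first. The proof therefore reduces to the invariance
\begin{equation*}
y_\nu(\varphi_1)=y_\nu(\varphi)\quad\text{and}\quad\meas\{\varphi_1\geq\nu\}=\meas\{\varphi\geq\nu\}\qquad\text{for all }\nu>0.
\end{equation*}
The equality of measures is immediate from the Riemannian viewpoint in the proof of Lemma \ref{defines_a_new_function}, which realises the elementary tamping as a permutation of voxel columns. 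The invariance of $y_\nu$ I would prove by splitting on the position of $x_\nu(\varphi)$ relative to $[\xi,\eta(\xi)]$: when $x_\nu(\varphi)\notin[\xi+1,\eta(\xi)-1]$, both $x_\nu$ and the running-maximum function on $[0,x_\nu(\varphi)]$ are only reorganised inside $[\xi,\eta-1]$ in a measure-preserving way, so the quantity $h_\nu:=\meas\bigl(\cH_\infty\cap[0,x_\nu]\bigr)$ is unchanged; when $x_\nu(\varphi)\in[\xi+1,\eta(\xi)-1]$, the leftmost mass at level $\nu$ lies in the translated mountain, so $x_\nu(\varphi_1)=x_\nu(\varphi)-1$, and one must show that $h_\nu(\varphi_1)=h_\nu(\varphi)-1$.

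The main obstacle is precisely this last bookkeeping. It rests on the defining property $\varphi(\eta-1)>\varphi(\xi)\geq\varphi(\eta)$: the strict inequality guarantees that the column arriving at position $\xi$ after translation has height strictly greater than $\varphi(\xi)$, so exactly one voxel of hollow at pivot level inside $[0,x_\nu]$ is plugged, while the condition $\varphi(\eta)\leq\varphi(\xi)$ prevents any new hollow from being created past position $\eta$; the hollows strictly above level $\varphi(\xi)$ within $[\xi+1,\eta-1]$ are merely shifted one voxel to the left, preserving their total measure. Once this is in hand, the induction closes, and Lemma \ref{unicity} is obtained as an immediate corollary since $y_\nu(\varphi)$ depends only on $\varphi$.
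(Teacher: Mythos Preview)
Your approach is correct and genuinely different from the paper's. The paper argues in one stroke: the leftmost cube at level $\nu$ sits at abscissa $x_\nu(\varphi)$, and the total leftward displacement it undergoes over the whole algorithm equals the number of hollow voxels to its left --- a global observation the paper states rather informally (``we can see that\dots''). The identity $\cH_\nu(\varphi)\cap[0,x_\nu(\varphi)]=\cH_\infty(\varphi)\cap[0,x_\nu(\varphi)]$ then gives the formula directly, bypassing any step-by-step analysis.

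Your inductive route --- verifying that $y_\nu$ is an invariant of each elementary tamping --- is more laborious but more systematic, and it makes the derivation of Lemma~\ref{unicity} fully explicit rather than circular. The bookkeeping you sketch does go through, though your heuristic (``exactly one voxel of hollow at pivot level is plugged'') is slightly misleading: in the case where the leftmost level-$\nu$ column lies in the translated mountain, what really happens is that the pivot position $\xi$ --- which is never a running-maximum record, since $\xi\in\underline{N}$ furnishes a strictly higher value to its left --- simply drops out of the count when the range $[0,x_\nu]$ shrinks by one, while the record/non-record pattern of the shifted columns is carried along bijectively. A clean uniform way to phrase both cases is to count running-max records rather than hollows on $[0,x_\nu]$: the cyclic shift sends the records of $\varphi$ on $[\xi+1,\eta-1]$ bijectively to the records of $\varphi_1$ on $[\xi,\eta-2]$, and neither position $\xi$ (for $\varphi$) nor position $\eta-1$ (for $\varphi_1$, which now carries the value $\varphi(\xi)$) is a record. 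The invariance of $y_\nu=x_\nu-h_\nu$ then follows in both cases at once.
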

\begin{proof} \indent
It is enough to consider the case $\varphi\in\cE_n(\RR)$.
The fact that these superlevel sets are segments whose length is equal to $\meas\{\varphi\geq\nu\}$ follows from the construction of the rearrangement by tamping. 

The value of the infimum of these segments is determined using the algorithm. We can see that the notion of hollows for a function in $\cE_n$ exactly coincides with the number of times a given cube is slid one step on the left during the tamping algorithm. More precisely, such a cube in place $(i,j)$ is slid on a distance which is the integer part of the quantity $\cH_j(\varphi)\cap[0,i]$. Indeed this quantity is the size of the hollows at level $j$ that we fill by moving cubes and a cube whose abscissa index is $i$ is not concerned by the filling of the hollows that are positioned at indices bigger than $i$. Similarly, what happens at layers bigger than $j$ does not interfere on what happens for the cube at $(i,j)$ but only the levels below. 

The conclusion of the demonstration then comes from the fact that, regarding the definition of $x_\nu(\varphi)$, we have
$$  \cH_\nu(\varphi)\cap[0,x_\nu(\varphi)]=\cH_\infty(\varphi)\cap[0,x_\nu(\varphi)].$$
\end{proof}
In the proposition above, the description of the super-level sets that we obtain remains well-defined for any function in $\mfM_+(\RR_+)$. From now, we can use the above proposition as the \emph{definition of the tamping}.
\begin{Defi} \label{definition_tamping_2}
Let $\varphi\in\mfM_+(\RR_+)$. We define the \emph{tamping} of $\varphi$, noted $\varphi^\natural$, as being the function of $\mfM_+(\RR_+)$ which super-level sets are 
$$\{\varphi^\natural\geq\nu\}=\Big[\;y_\nu(\varphi),\;y_\nu(\varphi)+\meas\{\varphi\geq\nu\}\;\Big],$$
where $y_\nu$ is defined by \eqref{define_x_nu} and \eqref{define_y_nu}.
\end{Defi}
To ensure that this defines a function, we have to check that $\mu\geq\nu$ implies
\begin{equation}\label{the_inclusion}
\Big[\;y_\mu(\varphi),\;y_\mu(\varphi)+\meas\{\varphi\geq\mu\}\;\Big]\;\subseteq\;\Big[\;y_\nu(\varphi),\;y_\nu(\varphi)+\meas\{\varphi\geq\nu\}\;\Big].
\end{equation}
\begin{Prop} The inclusion \eqref{the_inclusion} is verified for any $\varphi\in\mfM_+(\RR_+)$. \end{Prop}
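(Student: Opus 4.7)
The plan is to split the target inclusion into the two scalar endpoint inequalities
\begin{equation*}
y_\mu(\varphi)\;\geq\; y_\nu(\varphi)\qquad\text{and}\qquad y_\mu(\varphi)+\meas\{\varphi\geq\mu\}\;\leq\; y_\nu(\varphi)+\meas\{\varphi\geq\nu\}.
\end{equation*}
Since $\mu\geq\nu$ forces $\{\varphi\geq\mu\}\subseteq\{\varphi\geq\nu\}$, one gets for free $x_\mu(\varphi)\geq x_\nu(\varphi)$ and $\meas\{\varphi\geq\mu\}\leq\meas\{\varphi\geq\nu\}$, and substituting the formula defining $y$ reduces each inequality to an assertion about $\meas\bigl(\cH_\infty(\varphi)\cap(x_\nu,x_\mu]\bigr)$. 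Before anything else I would dispatch the degenerate case $\meas\{\varphi\geq\mu\}=0$, in which the $\mu$-level interval collapses to a point and the inclusion is trivial; from then on, I may assume $\meas\{\varphi\geq\mu\}>0$, which will be needed to compare essential infima and suprema.

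For the left endpoint I would simply unfold the definition into
\begin{equation*}
y_\mu(\varphi)-y_\nu(\varphi)\;=\;(x_\mu-x_\nu)\;-\;\meas\bigl(\cH_\infty(\varphi)\cap(x_\nu,x_\mu]\bigr),
\end{equation*}
and conclude via the trivial inclusion $\cH_\infty(\varphi)\cap(x_\nu,x_\mu]\subseteq(x_\nu,x_\mu]$, which caps the subtracted measure at $x_\mu-x_\nu$.

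For the right endpoint the desired inequality rearranges to
\begin{equation*}
\meas\bigl(\cH_\infty(\varphi)\cap(x_\nu,x_\mu]\bigr)\;+\;\meas\{\nu\leq\varphi<\mu\}\;\geq\;x_\mu-x_\nu,
\end{equation*}
and the core structural claim I would establish is that, up to a negligible set, $(x_\nu,x_\mu)\subseteq\cH_\infty(\varphi)\cup\{\nu\leq\varphi<\mu\}$. This I would check pointwise on a generic $z\in(x_\nu,x_\mu)$: the essential infimum definition of $x_\mu$ makes the subset on which $\varphi(z)\geq\mu$ negligible, so the only case requiring real work is $\varphi(z)<\nu$. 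For such $z$, I would use the chain $x_\nu\leq z<x_\mu\leq\supess\{\varphi\geq\nu\}$, where the last bound follows from $\{\varphi\geq\mu\}\subseteq\{\varphi\geq\nu\}$ combined with $\meas\{\varphi\geq\mu\}>0$, to place $z$ inside the essential convex hull of $\{\varphi\geq\nu\}$; since $\varphi(z)<\nu$ excludes $z$ from $\{\varphi\geq\nu\}$ itself, we get $z\in\cH(\{\varphi\geq\nu\})\subseteq\cH_\infty(\varphi)$.

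The principal obstacle is this last pointwise step. It requires teasing out of Definition \ref{define_hollows} the clean consequence that a point of the essential convex hull of $A$ that lies outside $A$ belongs to $\cH(A)$, and it rests on the inequality $x_\mu\leq\supess\{\varphi\geq\nu\}$, which silently needs $\{\varphi\geq\mu\}$ to have positive measure --- precisely why the degenerate case was isolated at the start.
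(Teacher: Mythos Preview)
Your proposal is correct and follows essentially the same approach as the paper. Both split the inclusion into the two endpoint inequalities, dispatch the left one by the trivial bound $\meas\bigl(\cH_\infty(\varphi)\cap[x_\nu,x_\mu]\bigr)\leq x_\mu-x_\nu$, and for the right one both rely on the structural fact that a point $z\in(x_\nu,x_\mu)$ with $\varphi(z)<\nu$ must lie in $\cH_\infty(\varphi)$; the paper phrases this via the complementary inclusion $\cH_\infty(\varphi)^c\cap[x_\nu,x_\mu]\subseteq\{\varphi>\nu\}$, while you phrase it as $(x_\nu,x_\mu)\subseteq\cH_\infty(\varphi)\cup\{\nu\leq\varphi<\mu\}$ up to a null set, but these are equivalent reformulations. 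Your explicit treatment of the degenerate case $\meas\{\varphi\geq\mu\}=0$ and the need for $x_\mu\leq\supess\{\varphi\geq\nu\}$ is slightly more careful than the paper's write-up.
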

\begin{proof} Let $\mu\geq\nu$. On the one hand we have
\begin{equation}\label{inclusion1}
\begin{split}
y_\mu(\varphi)-y_\nu(\varphi)\;&=\;\meas\Big([x_\nu(\varphi),x_\mu(\varphi)]\Big)-\meas\Big(\cH_\infty(\varphi)\cap[x_\nu(\varphi),x_\mu(\varphi)]\Big)\\
&=\;\meas\Big(\cH_\infty(\varphi)^c\cap[x_\nu(\varphi),x_\mu(\varphi)]\Big)\;\geq0.
\end{split}
\end{equation}
On the other hand,
\begin{equation}\label{inclusion2}
\begin{split}
\meas\Big(\cH_\infty(\varphi)^c\cap[x_\nu(\varphi),x_\mu(\varphi)]\Big)\;\leq\;\meas\Big(\{\nu<\varphi\}\cap[x_\nu(\varphi),x_\mu(\varphi)]\Big)\\
=\meas\Big(\{\nu<\varphi\leq\mu\}\cap[x_\nu(\varphi),x_\mu(\varphi)]\Big)\;\leq\;\meas\Big(\{\nu<\varphi\leq\mu\}\Big).
\end{split}
\end{equation}
Combining \eqref{inclusion1} and \eqref{inclusion2} we get
\begin{equation}\label{inclusion3}
y_\mu(\varphi)+\meas\{\varphi\geq\mu\}\;\leq\;y_\nu(\varphi)+\meas\{\varphi\geq\nu\}.
\end{equation}
Equations \eqref{inclusion1} and \eqref{inclusion3} give \eqref{the_inclusion}.
\end{proof}
Now that the tamping is well-defined, we can state the unimodality property, the Schwarz rearrangement inequality and the preservation of the Dirichlet boundary condition which are three of the four main properties that we required in Section \ref{sec:problem of tamping}.
\begin{Prop}[Unimodality]
Let $\varphi\in\mfM_+(\RR_+)$. Then there exist $s\in\RR_+$ such that $\varphi^\natural$ is non-decreasing on $[0,s]$ and non-increasing on $[s,+\infty)$.
\end{Prop}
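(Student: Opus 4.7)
The plan is to read off unimodality directly from the superlevel-set description of $\varphi^\natural$ given in Definition \ref{definition_tamping_2}, combined with the nesting \eqref{the_inclusion} just established. The underlying principle is the elementary fact that a function whose superlevel sets form a nested family of closed intervals is automatically unimodal: one simply picks the mode $s$ to be a common point of all the non-empty intervals, and then uses the convexity of intervals to propagate pointwise inequalities.

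Concretely, I will first abbreviate $I_\nu := \{\varphi^\natural \geq \nu\} = [a_\nu, b_\nu]$, with $a_\nu := y_\nu(\varphi)$ and $b_\nu := y_\nu(\varphi) + \meas\{\varphi \geq \nu\}$. The inclusion $I_\mu \subseteq I_\nu$ for $\mu \geq \nu$, just proved, translates into the monotonicity statements: $\nu \mapsto a_\nu$ is non-decreasing and $\nu \mapsto b_\nu$ is non-increasing (on the set of admissible levels where $I_\nu \neq \emptyset$). A direct consequence is that $a_\mu \leq b_\nu$ for every pair of admissible $\mu, \nu$. I then set
$$s \;:=\; \sup\bigl\{a_\nu \,:\, \nu > 0 \text{ and } I_\nu \neq \emptyset\bigr\},$$
with the convention $s = 0$ when $\varphi$ is a.e.\ zero. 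By the preceding inequality, $s \leq b_\nu$ for every admissible $\nu$. For the pointwise statement of unimodality I will work with the canonical representative $\varphi^\natural(x) := \sup\{\nu > 0 : x \in I_\nu\}$, which indeed has the prescribed superlevel sets.

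It then remains to check the two monotonicity properties. For $0 \leq x \leq y \leq s$, set $\nu := \varphi^\natural(x)$, the case $\nu = 0$ being trivial. Then $x \in I_\nu$, giving $a_\nu \leq x \leq y$, while $y \leq s \leq b_\nu$ by the definition of $s$; hence $y \in [a_\nu, b_\nu]$, i.e.\ $\varphi^\natural(y) \geq \nu = \varphi^\natural(x)$. The argument on $[s, +\infty)$ is symmetric: for $s \leq x \leq y$, set $\nu := \varphi^\natural(y)$ and use $a_\nu \leq s \leq x \leq y \leq b_\nu$ to obtain $\varphi^\natural(x) \geq \varphi^\natural(y)$. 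The only delicate point is the degenerate case in which $\varphi^\natural$ is non-decreasing on all of $\RR_+$, where the supremum defining $s$ may be infinite; one then simply chooses $s$ large enough that the tail $[s, +\infty)$ is trivial for the purpose at hand, or interprets the statement with $s \in \RR_+$ allowing the value $+\infty$. Apart from this minor bookkeeping, the proof contains no serious analytic difficulty: everything is a direct unpacking of the superlevel-set formula.
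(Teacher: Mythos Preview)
Your argument is correct and follows the same idea as the paper's proof, which consists of the single sentence ``This follows from the fact that the super-level sets of $\varphi^\natural$ are segments.'' You have simply supplied the details behind that sentence: defining the mode $s$ as $\sup_\nu a_\nu$, using the nesting \eqref{the_inclusion} to guarantee $s\in[a_\nu,b_\nu]$ for every admissible level, and then reading off the two monotonicity statements. Your handling of the degenerate case $s=+\infty$ is also appropriate, and in fact more careful than the paper, which does not comment on it.
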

\begin{proof}
This follows from the fact that the super-level sets of $\varphi^\natural$ are segments.
\end{proof}
\begin{Prop}[Schwarz rearrangement inequality for the tamping] Let $\varphi\in\mfM_+(\RR_+)$,
$$\forall\;x\in\RR_+,\qquad\meas\Big([0,x]\cap\{\varphi\geq\nu\}\Big)\;\leq\;\meas\Big([0,x]\cap\{\varphi^\natural\geq\nu\}\Big).$$
\end{Prop}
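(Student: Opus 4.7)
The plan is to work directly from Definition \ref{definition_tamping_2}, which gives an explicit description of the super-level sets of $\varphi^\natural$, and to compare it to those of $\varphi$ by a short case analysis. Writing $m:=\meas\{\varphi\geq\nu\}$ and $R(x):=\meas([0,x]\cap\{\varphi^\natural\geq\nu\})$, we know that $R$ is the explicit piecewise affine function that vanishes on $[0,y_\nu(\varphi)]$, grows linearly with slope $1$ on $[y_\nu(\varphi),y_\nu(\varphi)+m]$, and equals $m$ beyond. Everything will therefore reduce to estimating $L(x):=\meas([0,x]\cap\{\varphi\geq\nu\})$ against this piecewise expression.

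The first step I would record is the structural inequality $y_\nu(\varphi)\leq x_\nu(\varphi)$, which is immediate from \eqref{define_y_nu} because the hollows $\cH_\infty(\varphi)\cap[0,x_\nu(\varphi)]$ have non-negative measure. This simple observation is really the core of the statement: it tells us that the tamped super-level set starts no later than the original one.

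Then I would split into cases on $x$. If $x<x_\nu(\varphi)$, the essential-infimum property gives $L(x)=0$, and the inequality is trivial since $R\geq 0$. If $x\geq x_\nu(\varphi)$, the same property yields $L(x)=\meas([x_\nu(\varphi),x]\cap\{\varphi\geq\nu\})\leq x-x_\nu(\varphi)$, together with the obvious bound $L(x)\leq m$. Combining with $y_\nu\leq x_\nu$, we obtain $L(x)\leq x-y_\nu(\varphi)$, so in the regime $x\in[x_\nu(\varphi),y_\nu(\varphi)+m]$ we have $L(x)\leq x-y_\nu(\varphi)=R(x)$, and in the regime $x\geq y_\nu(\varphi)+m$ we have $L(x)\leq m=R(x)$. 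This exhausts all cases.

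I do not expect a serious obstacle here: once Definition \ref{definition_tamping_2} has been set up and $y_\nu\leq x_\nu$ is observed, the statement reduces to these elementary measure comparisons. The only points that require a bit of care are the degenerate case $m=0$ (where both sides vanish and there is nothing to prove) and the rigorous use of the essential infimum, namely that $\meas(\{\varphi\geq\nu\}\cap[0,x_\nu(\varphi)))=0$, which is exactly the defining property of $x_\nu(\varphi)$ introduced in \eqref{define_x_nu}.
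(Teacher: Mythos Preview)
Your argument is correct and follows essentially the same route as the paper's proof, which simply observes that $x_\nu(\varphi^\natural)=y_\nu(\varphi)\leq x_\nu(\varphi)$ and that $\{\varphi^\natural\geq\nu\}$ is a segment; you have merely made the ensuing elementary comparison explicit via your case split.
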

\begin{proof}
This is a consequence of the fact that this rearrangement verifies $\forall\;\nu>0,\; x_\nu(\varphi^\natural)\leq x_\nu(\varphi)$ and $\{\varphi^\natural\geq\nu\}$ is a segment.
\end{proof}
\begin{Prop}[Preservation of the Dirichlet boundary condition]\label{prop:dirichlet}
Let $\varphi\in\cC^0(\RR_+,\RR_+)$ be absolutely continuous. Then, $\varphi(0)=\varphi^\natural(0)$.
\end{Prop}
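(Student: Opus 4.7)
The plan is to establish both $\varphi^\natural(0)\geq c$ and $\varphi^\natural(0)\leq c$, where $c:=\varphi(0)$. By Definition \ref{definition_tamping_2} and the fact that $y_\nu(\varphi)\geq 0$ always holds, the condition $0\in\{\varphi^\natural\geq\nu\}$ collapses to $y_\nu(\varphi)=0$, so $\varphi^\natural(0)=\sup\{\nu>0:y_\nu(\varphi)=0\}$. For the lower bound, if $\nu<c$ then continuity of $\varphi$ at $0$ yields $\delta>0$ with $\varphi>\nu$ on $[0,\delta]$, so $x_\nu(\varphi)=0$ and thus $y_\nu(\varphi)=0$; letting $\nu\uparrow c$ gives $\varphi^\natural(0)\geq c$.

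The substantive step is $\varphi^\natural(0)\leq c$. Fix $\nu>c$; continuity of $\varphi$ forces $x_\nu(\varphi)>0$, and the goal is to show $\meas\big(\cH_\infty(\varphi)\cap[0,x_\nu(\varphi)]\big)<x_\nu(\varphi)$. The key preliminary is the characterization
$$x\in\cH_\infty(\varphi)\cap[0,x_\nu(\varphi))\Iff\sup_{t\in[0,x)}\varphi(t)>\varphi(x).$$
Indeed, by Definition \ref{define_L^p_hollows}, $x\in\cH_\infty(\varphi)$ amounts to the existence of some level $\lambda>\varphi(x)$ at which $x$ lies between two points of $\{\varphi\geq\lambda\}$. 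For $\lambda\leq\nu$ the right witness can be chosen to be $x_\nu(\varphi)$ itself (since $\varphi(x_\nu(\varphi))\geq\nu\geq\lambda$), reducing the question to the existence of a left witness; for $\lambda>\nu$, no left witness can exist since $\varphi<\nu$ on $[0,x_\nu(\varphi))$. Writing $M(x):=\sup_{[0,x]}\varphi$ for the running maximum, this identifies $[0,x_\nu(\varphi)]\setminus\cH_\infty(\varphi)$ with the closed set $S:=\{x\in[0,x_\nu(\varphi)]:\varphi(x)=M(x)\}$, and the proof reduces to proving $\meas(S)>0$.

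This is where absolute continuity intervenes. First, $M$ is itself absolutely continuous: given $\varepsilon>0$ and the AC modulus $\delta=\delta(\varepsilon)$ of $\varphi$, for any disjoint intervals $(x_i,y_i)$ with $\sum(y_i-x_i)<\delta$, every nonzero increment $M(y_i)-M(x_i)$ equals $\varphi(z_i)-M(x_i)$ for some $z_i\in[x_i,y_i]$ where the supremum is attained, and hence is bounded by $\varphi(z_i)-\varphi(x_i)$; the intervals $(x_i,z_i)$ remain disjoint with smaller total length, so summing and using AC of $\varphi$ yields $\sum(M(y_i)-M(x_i))<\varepsilon$. Since $M$ is AC, non-decreasing, with $M(0)=c$ and $M(x_\nu(\varphi))=\nu$, one has $\int_0^{x_\nu(\varphi)}M'(t)\,\mathrm{d}t=\nu-c>0$. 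On the open complement $[0,x_\nu(\varphi)]\setminus S$, which is a countable disjoint union of intervals, $M$ must be constant on each component (by continuity of $M$, any new running maximum inside a component would be a point of $S$), hence $M'=0$ a.e.\ off $S$. Therefore $\int_S M'\,\mathrm{d}t=\nu-c>0$, which forces $\meas(S)>0$, concludes $y_\nu(\varphi)>0$, and gives $\varphi^\natural(0)\leq c$. The main obstacle is precisely the transfer of absolute continuity from $\varphi$ to $M$: mere continuity of $\varphi$ would allow Cantor-staircase behavior of $M$ and could make $S$ null, invalidating the argument.
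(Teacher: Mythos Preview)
Your proof is correct and reaches the same conclusion as the paper, but the route diverges at the key step. Both arguments reduce to showing that the set $S=\{x\in[0,x_\nu(\varphi)]:\varphi(x)=M(x)\}$ of running-maximum points has positive measure. The paper observes that $\{x_\mu(\varphi):\mu\in[\varphi(0),\nu]\}$ is a subset of $S$ whose image under $\varphi$ is the interval $[\varphi(0),\nu]$, and then invokes the Luzin $N$-property of absolutely continuous functions (null sets map to null sets) to conclude that this preimage cannot be null. You instead transfer absolute continuity from $\varphi$ to the running maximum $M$, note that $M$ is constant on each component of the open complement of $S$, and use the fundamental theorem of calculus $\int_S M'=\nu-\varphi(0)>0$ to force $\meas(S)>0$.

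The paper's argument is shorter once the Luzin property is granted; yours is more self-contained, extracting everything directly from the $\varepsilon$--$\delta$ definition of absolute continuity without appealing to a named theorem. Your identification of $\cH_\infty(\varphi)\cap[0,x_\nu(\varphi))$ with $\{\varphi\neq M\}$ is also, in the continuous setting, exactly the content of Claims~1--2 in the proof of Lemma~\ref{double} (where $M$ is written $\varphi^\dag$), so you have independently recovered that piece of the structure.
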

This fact is proved in section \ref{section_of_proof}.

\subsection{Best non-decreasing upper bound}
In the first section devoted to the presentation of the problem, we emphasized the links that we want between the rearrangement by tamping and the Schwarz non-increasing rearrangement: the Schwarz inequality (Property \ref{Schwarz_propr}) and the Pólya – Szegő inequality (Theorem \ref{Polya_Szego}). In this section we propose another equivalent way to define the tamping that directly involves the Schwarz non-increasing rearrangement (called here after the double Schwarz formula). Although we will not work with this second definition in the rest of this paper, it remains per se interesting because it tells more about the links between the tamping and the Schwarz rearrangement.
\begin{Defi} 
Let $\varphi\in\mfM_+(\RR_+)$. We define the \emph{best non-decreasing upper bound} of $\varphi$ as being
\begin{equation}\label{up_formula}\varphi^\dag(x)\;:=\;\supess\Big(\varphi.\mathbbm{1}_{[0,x]}\Big).\end{equation}
\end{Defi}
\begin{Prop} \label{up_property}
The function $\varphi^\dag$ is the only function of
\begin{equation}\label{upup_arrows}\left\{\psi\in\mfM_+(\RR_+)\;:\;\begin{array}{l}
\varphi\leq\psi\;\;\text{almost everywhere,}\\
\psi\;\;\text{is non-decreasing,}
\end{array}
 \right\}
\end{equation}
that is minimal for the standard comparison of functions. 
\end{Prop}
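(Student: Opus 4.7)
The plan is to verify that $\varphi^\dag$ belongs to the set displayed in \eqref{upup_arrows} and that it lies below every other element of that set almost everywhere; together these make $\varphi^\dag$ the unique minimum, and in particular the unique minimal element. Monotonicity of $\varphi^\dag$ is immediate: if $x\leq x'$, then $\varphi\,\mathbbm{1}_{[0,x]}\leq\varphi\,\mathbbm{1}_{[0,x']}$ pointwise, so the essential suprema inherit the same ordering.

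The minimality step is the easier of the two substantive points. Fix any $\psi\in\mfM_+(\RR_+)$ that is non-decreasing with $\psi\geq\varphi$ a.e., and fix $x\geq 0$. For every $y\in[0,x]$ one has $\psi(x)\geq\psi(y)$ by monotonicity, while for a.e.\ $y\in[0,x]$ one has $\psi(y)\geq\varphi(y)$; hence $\psi(x)\geq\varphi(y)$ for a.e.\ $y\in[0,x]$. Taking the essential supremum in $y$ on $[0,x]$ yields $\psi(x)\geq\supess(\varphi\,\mathbbm{1}_{[0,x]})=\varphi^\dag(x)$, pointwise in $x$.

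The membership step, i.e.\ the pointwise a.e.\ inequality $\varphi\leq\varphi^\dag$, is where one has to be careful, because the value $\varphi(y)$ at a single point is not automatically controlled by the essential supremum on $[0,y]$. The remedy is to scan with a countable family of thresholds. For each $q\in\QQ_+^\ast$, the definition of the essential supremum furnishes a null set $N_q\subset[0,q]$ outside of which $\varphi\leq\varphi^\dag(q)$. Letting $D$ denote the (countable, hence Lebesgue-negligible) set of discontinuities of the non-decreasing function $\varphi^\dag$, set $N:=D\cup\bigcup_{q\in\QQ_+^\ast}N_q$, which is still negligible. For $y\notin N$ and any rational $q>y$ one has $\varphi(y)\leq\varphi^\dag(q)$; letting $q\searrow y$ through the rationals yields $\varphi(y)\leq\varphi^\dag(y^+)=\varphi^\dag(y)$, the last equality holding because $\varphi^\dag$ is continuous at $y\notin D$.

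With the two previous steps combined, $\varphi^\dag$ belongs to the set in \eqref{upup_arrows} and satisfies $\varphi^\dag\leq\psi$ a.e.\ for every other element $\psi$; uniqueness of a minimal (in fact least) element follows formally. The main obstacle is the membership step, where the a.e.\ versus pointwise gap has to be bridged by simultaneously exploiting the countability of $\QQ_+^\ast$ and that of the jump set of the monotone function $\varphi^\dag$.
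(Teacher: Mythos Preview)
Your proof is correct and, in fact, cleaner than the paper's. The two approaches differ in emphasis and technique.

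For the minimality step, you give a direct two-line argument: $\psi(x)$ is an almost-everywhere upper bound for $\varphi$ on $[0,x]$, hence dominates the essential supremum $\varphi^\dag(x)$. The paper instead argues by contradiction, assuming there is a bounded set $A$ of positive measure on which $\supess(\psi\,\mathbbm{1}_A)<\supess(\varphi\,\mathbbm{1}_A)$, and then splits into two cases according to whether the supremum of $\varphi$ on $[0,a]$ (with $a=\supess A$) is ``attained near $a$'' or strictly earlier; each case produces a contradiction with either $\psi\geq\varphi$ or the monotonicity of $\psi$. Your route avoids this case distinction entirely.

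For the membership step $\varphi\leq\varphi^\dag$ a.e., the paper simply asserts it (``By definition we have $\varphi^\dag\in\ldots$''), whereas you supply the actual argument: control a single null set by scanning over rational thresholds and then pass to the limit using continuity of $\varphi^\dag$ off its countable jump set. This is a genuine gap you have filled, since for a fixed $x$ the value $\varphi(x)$ need not be bounded by $\supess(\varphi\,\mathbbm{1}_{[0,x]})$.

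Finally, the paper proves uniqueness separately via the observation that $\min(\psi,\chi)$ still lies in the admissible set, while you simply note that a least element, once exhibited, is automatically the unique minimal one. Both are valid; yours is shorter because you have already shown $\varphi^\dag$ is a global minimum rather than merely minimal.
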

\noindent This proposition is proved in Section \ref{section_of_proof}. 

We define
\begin{equation}\label{def_a}s(\varphi)\;:=\;\lim\limits_{\nu\rightarrow(\supess\varphi)^-}\supess\{\varphi>\nu\}.\end{equation}
The value $s(\varphi)$ can be understood, roughly speaking, as being the supremum of the ``argmax'' of the function $\varphi$. It is not possible to define $s(\varphi)$ directly this way because $\varphi$ is only measurable and defined almost everywhere. We now define
\begin{equation}\label{def_alpha}
\sigma(\varphi)\;:=\;s(\varphi)-\meas\Big(\{\varphi\;\neq\;\varphi^\dag\}\cap[0,s(\varphi)]\Big).
\end{equation}
The purpose of this definition is the fact that we actually have $\sigma(\varphi)=s(\varphi^\natural)$ (this equality is given by the proof of the following lemma).

\begin{Lemm}[Double Schwarz formula]\label{double} 
Let $\varphi\in\mfM_+(\RR_+)$. Define $s(\varphi)$ and $\sigma(\varphi)$ respectively by~\eqref{def_a} and~\eqref{def_alpha}. Suppose that $s(\varphi)<+\infty$. Then, 
\begin{equation}\label{big_formula}
\varphi^\natural(x)\;=\;\left\{\begin{array}{ll}
\Big(\varphi.\mathbbm{1}_{\{\varphi=\varphi^\dag\}}\Big)^{\!\!\ast}\big(\sigma(\varphi)-x\big) &\;\mathrm{if}\quad x\leq \sigma(\varphi)\\\\
\Big(\varphi.\mathbbm{1}_{\{\varphi\neq\varphi^\dag\}}\Big)^{\!\!\ast}\big(x-\sigma(\varphi)\big)&\quad\mathrm{otherwise,}
\end{array} 
\right.
\end{equation}
where the superscript $\ast$ refers to the Schwarz non-increasing rearrangement.
\end{Lemm}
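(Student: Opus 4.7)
The plan is to check that both sides of~\eqref{big_formula} have identical superlevel sets at every level $\nu>0$, invoking the description of $\{\varphi^\natural\geq\nu\}$ provided by Definition~\ref{definition_tamping_2}. Set $\varphi_1:=\varphi\cdot\mathbbm{1}_{\{\varphi=\varphi^\dag\}}$ and $\varphi_2:=\varphi\cdot\mathbbm{1}_{\{\varphi\neq\varphi^\dag\}}$; these have disjoint supports with $\varphi=\varphi_1+\varphi_2$ almost everywhere, hence $\meas\{\varphi_1\geq\nu\}+\meas\{\varphi_2\geq\nu\}=\meas\{\varphi\geq\nu\}$ for each $\nu>0$. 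A direct application of Definition~\ref{Schwarz_rearrangement} to each piece of the right-hand side of~\eqref{big_formula} shows that its superlevel set at level $\nu$ is the interval
\begin{equation*}
\bigl[\,\sigma(\varphi)-\meas\{\varphi_1\geq\nu\},\;\sigma(\varphi)+\meas\{\varphi_2\geq\nu\}\,\bigr],
\end{equation*}
whose length already matches that prescribed for $\{\varphi^\natural\geq\nu\}$. Everything therefore reduces to the endpoint identity $\sigma(\varphi)-\meas\{\varphi_1\geq\nu\}=y_\nu(\varphi)$.

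To evaluate $\meas\{\varphi_1\geq\nu\}$, I would exploit that $\varphi^\dag$ is non-decreasing with $\varphi^\dag(x)=\supess(\varphi\,\mathbbm{1}_{[0,x]})$, so $\{\varphi^\dag\geq\nu\}=[x_\nu(\varphi),+\infty)$ up to endpoints. Moreover, by the definition~\eqref{def_a} of $s(\varphi)$, one has $\varphi(x)<\supess\varphi=\varphi^\dag(x)$ essentially for $x>s(\varphi)$, so the set $\{\varphi=\varphi^\dag\}$ is concentrated on $[0,s(\varphi)]$ modulo a null set. Combining these yields
\begin{equation*}
\meas\{\varphi_1\geq\nu\}=\meas\bigl(\{\varphi=\varphi^\dag\}\cap[x_\nu(\varphi),s(\varphi)]\bigr)=s(\varphi)-x_\nu(\varphi)-\meas\bigl(\{\varphi\neq\varphi^\dag\}\cap[x_\nu(\varphi),s(\varphi)]\bigr),
\end{equation*}
and substituting into $\sigma(\varphi)-\meas\{\varphi_1\geq\nu\}$ via~\eqref{def_alpha} rewrites the target equality as the measure-theoretic identity
\begin{equation*}
\meas\bigl(\{\varphi\neq\varphi^\dag\}\cap[0,x_\nu(\varphi)]\bigr)=\meas\bigl(\cH_\infty(\varphi)\cap[0,x_\nu(\varphi)]\bigr).
\end{equation*}

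The heart of the proof, and what I expect to be the main obstacle, is this last identity: one has to recognise the essential hollows of $\varphi$ as exactly the places where $\varphi$ sits strictly below its running essential supremum $\varphi^\dag$. The plan is to establish the almost-everywhere equality $\cH_\infty(\varphi)\cap[0,s(\varphi)]=\{\varphi<\varphi^\dag\}\cap[0,s(\varphi)]$. For the inclusion $\subseteq$: if $x\in\cH(\{\varphi\geq\lambda\})$ for some $\lambda$, then by Definition~\ref{define_hollows} $x\in\mathrm{conv\;ess\;}\{\varphi\geq\lambda\}$ while $\varphi(x)<\lambda$ essentially; the essential-convex-hull condition delivers an essential point $y\leq x$ with $\varphi(y)\geq\lambda$, hence $\varphi^\dag(x)\geq\lambda>\varphi(x)$. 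For the converse, given $x\in[0,s(\varphi)]$ with $\varphi(x)<\varphi^\dag(x)$, pick any $\lambda\in\bigl(\varphi(x),\varphi^\dag(x)\bigr)$; then $\mathrm{inf\;ess\;}\{\varphi\geq\lambda\}\leq x$ by the definition of $\varphi^\dag$, while $\lambda<\varphi^\dag(x)\leq\supess\varphi$ combined with~\eqref{def_a} yields $\mathrm{sup\;ess\;}\{\varphi\geq\lambda\}\geq s(\varphi)\geq x$, so $x\in\mathrm{conv\;ess\;}\{\varphi\geq\lambda\}$ while $\varphi(x)<\lambda$, placing $x$ in the hollow at level $\lambda$ and therefore in $\cH_\infty(\varphi)$.

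The trickiest point will be the null-set bookkeeping hidden in Definitions~\ref{define_conv_ess} and~\ref{define_hollows}, where a union over all measure-zero modifications $B$ of a given set appears: I would verify that, in dimension one, these modifications do not affect the essential convex hull or the measures of the resulting sets, so that the pointwise equivalences above survive the integration against Lebesgue measure. Once this is settled, the degenerate case $x_\nu(\varphi)=s(\varphi)$ (in which both sides of the target identity vanish) is handled directly, and the superlevel sets of the two sides of~\eqref{big_formula} then coincide for every $\nu>0$, which establishes the formula.
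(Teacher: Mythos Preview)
Your proposal is correct and follows essentially the same route as the paper's proof. The paper organizes the argument into four claims: Claims 1 and 2 establish precisely your ``heart of the proof'' identity $\cH_\infty(\varphi)\cap[0,s(\varphi)]=\{\varphi\neq\varphi^\dag\}\cap[0,s(\varphi)]$ up to a null set (each inclusion separately), Claim 3 carries out your computation of the left endpoint $x_\nu(\widehat{\varphi})=\sigma(\varphi)-\meas\{\varphi_1\geq\nu\}$ and reduces it via~\eqref{def_alpha} to $y_\nu(\varphi)$, and Claim 4 is your length-matching step; the conclusion is then identical to yours.
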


{\includegraphics[width=9cm]{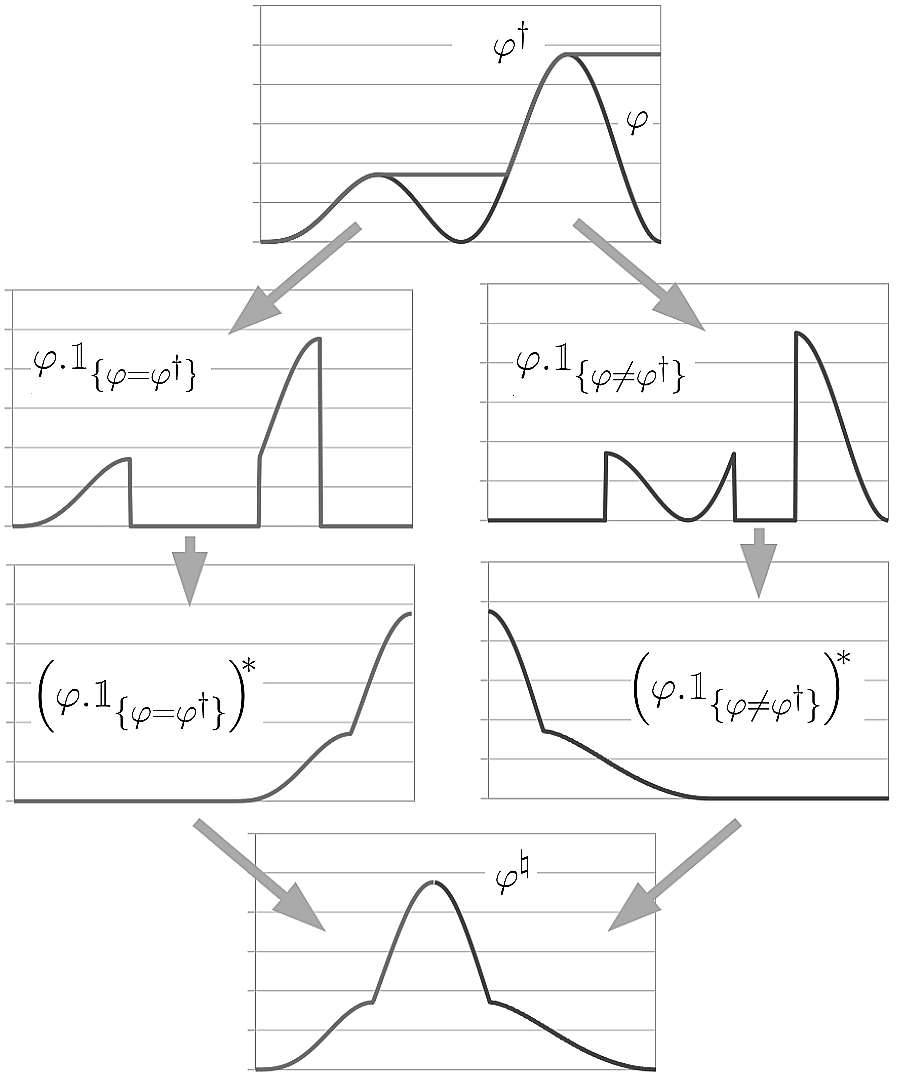}\centering\nopagebreak
\captionof{figure}{Tamping process and Schwarz symmetrization. Tamping of the function $x~\!\!\mapsto~\!\!x.\sin^2(2\pi x).\mathbbm{1}_{[0,1]}(x)$ computed using the double Schwarz formula \eqref{big_formula}.}
\label{uparrow}}~\newline\indent
The proof of formula \eqref{big_formula} is given in Section \ref{section_of_proof}. If we note $\widehat{\varphi}$ the expression on the right hand side of \eqref{big_formula}, the main steps of the proof are the following.
\begin{enumerate}
\item Establish that $\{\varphi\neq\;\varphi^\dag\}\cap[0,s(\varphi)]=\cH_\infty(\varphi)\cap[0,s(\varphi)].$
\item Conclude that $x_\nu(\varphi^\natural)=x_\nu(\widehat{\varphi}).$
\item Establish that $\meas\{\varphi^\natural\geq\nu\}=\meas\{\widehat{\varphi}\geq\nu\}.$
\item Conclude that $\{\varphi^\natural\geq\nu\}=\{\widehat{\varphi}\geq\nu\}$ and then $\varphi^\natural=\widehat{\varphi}$.
\end{enumerate}

One interest of this formula is the fact that the notion of \emph{best non-decreasing upper bound} also manages to seize the notion of \emph{hollows} that we introduced at the previous section. This aspect is hidden in the formula but is more visible in the proof. The main idea behind this formula is the first point of the proof where we precise what we can obtain about the hollows of $\varphi$ when we know $\varphi^\dag$. Yet, the main aspect of Formula \eqref{big_formula} that makes it interesting is the fact that the Schwarz non-increasing rearrangement appears in the computation of the tamping. With such a formula, we can reformulate and interpret the tamping as a double Schwarz non-increasing rearrangement. This idea is illustrated at Figure \ref{uparrow} where the tamping of the function $x\longmapsto x.\sin^2(2\pi x).\mathbbm{1}_{[0,1]}(x)$ is proceeded using the Double Schwarz formula \eqref{big_formula} (this is the process that we used to obtain the plots of Figure \ref{xsin2x} in the section ``Presentation of the problem'').

\section{Main results about the tamping}

\subsection{Functional analysis}
To deal with rearrangements, we provide in this section some tools of functional analysis for the manipulations of the super-level sets of functions in $L^p_+(\RR_+)$. The aim is to obtain results about the links between convergence of the super-level sets and convergence in $L^p$.

\subsubsection{Results on convergence of the super-level sets}

\begin{Lemm}\label{crucial_inequality} Let $\Omega$ be a domain of $\RR^d$ and let $p\in[1,+\infty)$. Let $\varphi,$ $\psi$ in $L^p_+(\Omega)$. We have
\begin{equation}\label{crucru} \big\|\varphi-\psi\big\|_{L^p}^p\;\leq\; p\int_0^\infty\nu^{p-1}\meas\Big(\{\varphi\geq\nu\}\triangle\{\psi\geq\nu\}\Big)\,\mathrm{d}\nu,\end{equation}
where $\triangle$ is the symmetrical difference between two sets: $A\triangle B:=(A\cup B)\setminus(A\cap B)$.
Equality holds if and only if $p=1$ or if the support of the two functions are disjointed.
\end{Lemm}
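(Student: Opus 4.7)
\bigskip
\noindent\textbf{Proof plan.}

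The plan is to reduce the inequality to a pointwise elementary inequality by applying the layer-cake representation to both sides and then using Fubini. First I would rewrite, for each $x\in\Omega$, the pointwise absolute difference using the fact that $\mathbbm{1}_{\{\varphi\geq\nu\}}(x)-\mathbbm{1}_{\{\psi\geq\nu\}}(x)$ is nonzero precisely when $\nu$ lies between $\min(\varphi(x),\psi(x))$ and $\max(\varphi(x),\psi(x))$. From this, and from the weighted layer-cake identity \eqref{layercake2} applied with $f(\nu)=\nu^p$ (so $f'(\nu)=p\nu^{p-1}$), one obtains
\begin{equation*}
p\int_0^\infty \nu^{p-1}\,\mathbbm{1}_{\{\varphi\geq\nu\}\triangle\{\psi\geq\nu\}}(x)\,\mathrm{d}\nu \;=\; \max(\varphi(x),\psi(x))^p - \min(\varphi(x),\psi(x))^p.
\end{equation*}

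The inequality \eqref{crucru} then reduces, by Fubini on the right-hand side, to the pointwise estimate
\begin{equation*}
\bigl|\varphi(x)-\psi(x)\bigr|^{p} \;\leq\; \max(\varphi(x),\psi(x))^{p} - \min(\varphi(x),\psi(x))^{p},
\end{equation*}
which, writing $a=\max(\varphi(x),\psi(x))\geq b=\min(\varphi(x),\psi(x))\geq 0$, is the classical super-additivity inequality $(a-b)^{p}+b^{p}\leq a^{p}$ for $p\geq 1$. I would prove this by expanding $a^{p}=((a-b)+b)^{p}$ and invoking the elementary fact that the map $t\mapsto t^{p}$ is super-additive on $\RR_+$ when $p\geq 1$, or equivalently that $(1+s)^{p}\geq 1+s^{p}$ for $s\geq 0$, which is a one-line consequence of the convexity of $t\mapsto t^{p}$ together with the values at $0$ and $1$.

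For the equality case, notice that when $p=1$ the super-additivity inequality is an equality for all $a\geq b\geq 0$, so the whole chain becomes an equality; this matches the layer-cake identity $\|\varphi-\psi\|_{L^1}=\int_0^\infty\meas(\{\varphi\geq\nu\}\triangle\{\psi\geq\nu\})\,\mathrm{d}\nu$. When $p>1$, the inequality $(a-b)^p\leq a^p-b^p$ is strict unless $b=0$, i.e.\ unless $\min(\varphi(x),\psi(x))=0$. Integrating, equality in \eqref{crucru} holds if and only if $\min(\varphi,\psi)=0$ almost everywhere, which is exactly the condition that the supports of $\varphi$ and $\psi$ are essentially disjoint.

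The main obstacle is essentially the verification of the pointwise inequality $(a-b)^p\leq a^p-b^p$ for $p\geq 1$; everything else (layer-cake, Fubini, identification of the symmetric difference of superlevel sets with the interval $(\min,\max]$) is routine. Care must only be taken with the measure-zero ambiguity on $\{\varphi=\nu\}$ and $\{\psi=\nu\}$, which does not affect the integral in $\nu$.
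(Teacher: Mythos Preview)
Your proof is correct and is essentially the same argument as the paper's, only with the pointwise step stated more directly. The paper obtains the inequality by rewriting $\|\varphi-\psi\|_{L^p}^p$ via the identity $\mathrm{hypo}(|\varphi-\psi|)=(\mathrm{hypo}(\varphi)\triangle\mathrm{hypo}(\psi))^\ast$ (Schwarz rearrangement in the vertical variable) and then invoking the Schwarz rearrangement inequality against the non-decreasing weight $\nu^{p-1}$; unwound at a fixed $x$, this is exactly your super-additivity inequality $(a-b)^p\leq a^p-b^p$ with $a=\max(\varphi(x),\psi(x))$, $b=\min(\varphi(x),\psi(x))$. So the two proofs differ only in packaging: the paper's rearrangement language ties the lemma back to the theme of the article, while your formulation is slightly more elementary and makes the equality analysis transparent. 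One small remark on the equality case: the pointwise inequality is also an equality when $\varphi(x)=\psi(x)$ (both sides vanish), so the precise integrated condition is ``$\min(\varphi,\psi)=0$ a.e.\ on $\{\varphi\neq\psi\}$'' rather than ``$\min(\varphi,\psi)=0$ a.e.''; the paper's own statement has the same looseness, so this does not affect the comparison.
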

Note that the right-hand side of Inequality \eqref{crucru} is finite because we have
\begin{align*}
&p\int_0^\infty\nu^{p-1}\meas\Big(\{\varphi\geq\nu\}\triangle\{\psi\geq\nu\}\Big)\,\mathrm{d}\nu\\
&\leq\;p\int_0^\infty\nu^{p-1}\Big(\meas\{\varphi\geq\nu\}+\meas\{\psi\geq\nu\}\Big)\,\mathrm{d}\nu\\
&=\;\|\varphi\|_{L^p}^p+\|\psi\|_{L^p}^p.
\end{align*}
The proof of Inequality \eqref{crucru} is given in Section \ref{section_of_proof} and it relies on an idea from \cite{CianchiFerone}. 

\begin{Lemm}\label{Cv_equiv} 
Let $\Omega$ be a domain of $\RR^d$ and let $p\in[1,+\infty)$. Let $\varphi$ and $(\varphi_n)_{n\in\NN}$ in $L^p_+(\Omega)$. Then $(\varphi_n)$ converges towards $\varphi$ in $L^p$ if and only if
\begin{equation}
p\int_0^\infty\nu^{p-1}\meas\Big(\{\varphi\geq\nu\}\triangle\{\varphi_n\geq\nu\}\Big)\,\mathrm{d}\nu\;\xrightarrow[n\rightarrow\infty]{}\;0.\label{L^p_superconv}\end{equation}
\end{Lemm}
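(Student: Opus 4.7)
The reverse implication of Lemma \ref{Cv_equiv} is immediate: Lemma \ref{crucial_inequality} already provides that $\|\varphi_n-\varphi\|_{L^p}^p$ is bounded above by the integral appearing in \eqref{L^p_superconv}, so if that integral tends to $0$, so does $\|\varphi_n-\varphi\|_{L^p}$. The substance of the statement is therefore the forward direction, and this is where I would focus the proof.

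The main idea is to rewrite the integral in \eqref{L^p_superconv} as a standard $L^p$-type expression. The key observation is the set identity $\{\varphi\geq\nu\}\triangle\{\varphi_n\geq\nu\}=\{\max(\varphi,\varphi_n)\geq\nu\}\setminus\{\min(\varphi,\varphi_n)\geq\nu\}$, from which the layer-cake formula \eqref{layercake_Lp} applied separately to $\max(\varphi,\varphi_n)$ and $\min(\varphi,\varphi_n)$ yields the identity
$$p\int_0^\infty\nu^{p-1}\meas\Big(\{\varphi\geq\nu\}\triangle\{\varphi_n\geq\nu\}\Big)\,\mathrm{d}\nu\;=\;\int_\Omega\Big(\max(\varphi,\varphi_n)^p-\min(\varphi,\varphi_n)^p\Big)\,\mathrm{d}x.$$
Since $\varphi,\varphi_n\geq 0$ and $p\geq 1$, the integrand on the right simplifies to $|\varphi^p-\varphi_n^p|$, so the task reduces to showing $\|\varphi^p-\varphi_n^p\|_{L^1}\to 0$ whenever $\|\varphi_n-\varphi\|_{L^p}\to 0$.

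For $p=1$ this is already the claim. For $p>1$, I would invoke the elementary inequality $|a^p-b^p|\leq p|a-b|(a^{p-1}+b^{p-1})$ for $a,b\geq 0$ (immediate from the mean value theorem), followed by Hölder's inequality with exponents $p$ and $p/(p-1)$. This gives
$$\|\varphi^p-\varphi_n^p\|_{L^1}\;\leq\;p\,\|\varphi-\varphi_n\|_{L^p}\Big(\|\varphi\|_{L^p}^{p-1}+\|\varphi_n\|_{L^p}^{p-1}\Big).$$
Since $L^p$-convergence of $\varphi_n$ forces $\|\varphi_n\|_{L^p}$ to remain bounded, the right-hand side vanishes as $n\to\infty$. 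The one delicate step is the layer-cake identity above: once the right decomposition of the symmetric difference into max/min superlevel sets is spotted, the rest of the argument is a routine Hölder estimate.
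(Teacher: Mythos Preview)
Your proof is correct and considerably more direct than the paper's. The identity
\[
p\int_0^\infty\nu^{p-1}\meas\Big(\{\varphi\geq\nu\}\triangle\{\varphi_n\geq\nu\}\Big)\,\mathrm{d}\nu=\big\|\varphi^p-\varphi_n^p\big\|_{L^1(\Omega)}
\]
that you obtain via the max/min decomposition of the symmetric difference is the key simplification; once this is in hand, the mean value inequality $|a^p-b^p|\leq p\,|a-b|\,(a^{p-1}+b^{p-1})$ together with H\"older reduces the forward implication to a one-line estimate.

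The paper takes a different and heavier route. It first restricts to bounded, compactly supported $\varphi$ and $\psi$ and proves an inequality of the form
\[
p\int_0^\infty\nu^{p-1}\meas\big(\{\varphi\geq\nu\}\triangle\{\psi\geq\nu\}\big)\,\mathrm{d}\nu\;\leq\;C\|\varphi-\psi\|_{L^p}^p+C\big(\max(\|\varphi\|_{L^\infty},\|\psi\|_{L^\infty})\,\meas(\supp|\varphi-\psi|)^{1/p}\big)^{p-1}\|\varphi-\psi\|_{L^p},
\]
via a ``reverse'' of the Schwarz-type estimate used in Lemma~\ref{crucial_inequality} (shifting the variable $\nu$ by $\max(\|\varphi\|_{L^\infty},\|\psi\|_{L^\infty})$ before applying the layer-cake formula). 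It then removes the boundedness and compact-support assumptions by a truncation argument in both height and space. Your argument bypasses all of this: no $L^\infty$ bounds, no truncation, no $\varepsilon$-$N$ bookkeeping. What the paper's approach does buy is an explicit two-term upper bound in the bounded/compact case, but for the purpose of proving Lemma~\ref{Cv_equiv} your route is both shorter and sharper.
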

The fact that the convergence in $L^p$ follows from \eqref{L^p_superconv} is a consequence of Lemma \ref{crucial_inequality}. The converse is proved in Section 4.

\begin{Coro}\label{super-level_cv} 
Let $\Omega$ be a domain of $\RR^d$ and let $p\in[1,+\infty)$. Let $\varphi\in L^p_+(\Omega)$ and let $\varphi_n\longrightarrow\varphi$ in $L^p$. Then, up to an omitted extraction of this sequence, we have for almost every $\nu\in\RR_+$
\begin{equation}
\meas\Big(\{\varphi\geq\nu\}\triangle\{\varphi_n\geq\nu\}\Big)\;\xrightarrow[n\rightarrow\infty]{}\;0.
\end{equation}
\end{Coro}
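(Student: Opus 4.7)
The plan is to reduce the statement to the classical fact that $L^1$-convergence implies almost-everywhere convergence along a subsequence, after applying Lemma \ref{Cv_equiv} to the hypothesis.

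First I would invoke Lemma \ref{Cv_equiv}: since $\varphi_n \to \varphi$ in $L^p$, we obtain
\begin{equation*}
\int_0^\infty \nu^{p-1}\,\meas\Big(\{\varphi\geq\nu\}\triangle\{\varphi_n\geq\nu\}\Big)\,\mathrm{d}\nu \;\xrightarrow[n\to\infty]{}\; 0.
\end{equation*}
Setting
\begin{equation*}
g_n(\nu) \;:=\; \nu^{p-1}\,\meas\Big(\{\varphi\geq\nu\}\triangle\{\varphi_n\geq\nu\}\Big),
\end{equation*}
this exactly says that the non-negative sequence $(g_n)$ converges to $0$ in $L^1(\RR_+)$.

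Next, I would apply the standard result from integration theory stating that a sequence converging to $0$ in $L^1(\RR_+)$ admits a subsequence $(g_{n_k})$ that converges to $0$ pointwise almost everywhere on $\RR_+$. Since $\nu^{p-1} > 0$ for every $\nu > 0$ (whether $p=1$ or $p>1$), dividing through yields
\begin{equation*}
\meas\Big(\{\varphi\geq\nu\}\triangle\{\varphi_{n_k}\geq\nu\}\Big) \;\xrightarrow[k\to\infty]{}\; 0 \quad \text{for a.e. } \nu\in\RR_+,
\end{equation*}
which is precisely the claimed conclusion after relabeling the extracted subsequence as $(\varphi_n)$.

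There is no real obstacle here: the entire content of the corollary is packaged into Lemma \ref{Cv_equiv}, and the remainder is the textbook extraction argument for $L^1$-convergent sequences. The only point worth a line of care is to note that the weight $\nu^{p-1}$ does not vanish on $(0,+\infty)$, so the almost-everywhere convergence of $g_{n_k}$ really does transfer to the measure of the symmetric difference.
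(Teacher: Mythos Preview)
Your proof is correct and follows the same strategy as the paper: invoke Lemma \ref{Cv_equiv} to obtain $L^1$-convergence and then extract an almost-everywhere convergent subsequence. Your version is in fact slightly cleaner than the paper's, which first strips off the weight $\nu^{p-1}$ on intervals $(a,+\infty)$ and then needs a diagonal extraction over $a_k\to 0^+$; by keeping the weight in $g_n$ and dividing by $\nu^{p-1}>0$ only at the end, you avoid that extra step.
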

\begin{proof} 
Let $a>0$. We have
\begin{align}\label{41}
&p\int_a^\infty\nu^{p-1}\meas\Big(\{\varphi\geq\nu\}\triangle\{\varphi_n\geq\nu\}\Big)\,\mathrm{d}\nu\;\\&\geq\;p\,a^{p-1}\int_a^\infty\meas\Big(\{\varphi\geq\nu\}\triangle\{\varphi_n\geq\nu\}\Big)\,\mathrm{d}\nu,\label{42}
\end{align}
and then $\nu\longmapsto\meas\Big(\{\varphi\geq\nu\}\triangle\{\varphi_n\geq\nu\}\Big)$ belongs to $L^1(a,+\infty)$. But since \eqref{41} vanishes when $n\rightarrow\infty$ by Lemma \ref {Cv_equiv} then so does \eqref{42}. We conclude using a standard result \cite{Brezis} which states that the convergence in $L^1$ implies the convergence almost everywhere up to an extraction. Considering a sequence $a_k\longrightarrow0^+$ and successive extractions completes the proof.\end{proof}
\subsubsection{Results on convergence in $L^p$}

\begin{Lemm}[Weak convergence in $L^p$]\label{before_crucial_lemma}
Let $\Omega$ be a domain of $\RR^d$ and let $p\in(1,+\infty)$.
Let $\varphi\in L^p_+(\Omega)$ and let $\varphi_n\in L^p_+(\Omega)$ such that
$\|\varphi_n\|_{L^p}$ is a bounded sequence and
$$\meas\Big(\{\varphi_n\geq\nu\}\triangle\{\varphi\geq\nu\}\Big)\;\xrightarrow[n\rightarrow+\infty]{}\;0,$$
for almost every $\nu>0$.
Then $(\varphi_n)$ weakly converges towards $\varphi$ in $L^p$.
\end{Lemm}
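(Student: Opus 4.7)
The plan is to reduce weak convergence in $L^p$ to convergence against indicator functions of finite-measure sets, and then extend by density. Since $p>1$, the dual exponent $p'=p/(p-1)$ is finite and simple functions with support of finite measure are dense in $L^{p'}(\Omega)$; this is what lets the density extension work, and the uniform bound $C:=\sup_n\|\varphi_n\|_{L^p}^p<+\infty$ will let us control error terms uniformly in $n$.

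First I would test $\varphi_n$ against $\mathbbm{1}_E$ for an arbitrary measurable $E\subseteq\Omega$ with $\meas(E)<+\infty$. Using the layer-cake representation \eqref{layercake} and Tonelli's theorem one writes
\begin{equation*}
\int_\Omega \varphi_n\,\mathbbm{1}_E\,\mathrm{d}x\;=\;\int_0^{+\infty}\!\meas\bigl(E\cap\{\varphi_n\geq\nu\}\bigr)\,\mathrm{d}\nu,
\end{equation*}
and the same with $\varphi$ in place of $\varphi_n$. The hypothesis combined with the elementary inclusion $(E\cap A)\triangle(E\cap B)\subseteq A\triangle B$ gives pointwise a.e. convergence in $\nu$ of the integrand.

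The crux is producing an $L^1(0,+\infty)$ dominant so that dominated convergence applies. I would combine two bounds: the trivial estimate $\meas\bigl(E\cap\{\varphi_n\geq\nu\}\bigr)\leq\meas(E)$, useful near $\nu=0$, together with Markov's inequality $\meas\{\varphi_n\geq\nu\}\leq\|\varphi_n\|_{L^p}^p\,\nu^{-p}\leq C\nu^{-p}$, useful for large $\nu$. The resulting dominant $\nu\mapsto\min\bigl(\meas(E),\,C\nu^{-p}\bigr)$ is integrable on $(0,+\infty)$ precisely because $p>1$, and this is the one place where the restriction $p>1$ is truly used. Dominated convergence then yields $\int_E\varphi_n\to\int_E\varphi$.

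To finish, by linearity the convergence extends to every simple function $\sigma$ with support of finite measure. For a general $\psi\in L^{p'}(\Omega)$, approximate $\psi$ by such $\sigma_k$ in $L^{p'}$ and apply the triangle inequality together with Hölder's inequality,
\begin{equation*}
\Bigl|\int(\varphi_n-\varphi)\psi\Bigr|\;\leq\;\bigl(C^{1/p}+\|\varphi\|_{L^p}\bigr)\|\psi-\sigma_k\|_{L^{p'}}+\Bigl|\int(\varphi_n-\varphi)\sigma_k\Bigr|,
\end{equation*}
choosing $k$ large first to shrink the first term uniformly in $n$, and then letting $n\to+\infty$ to kill the second. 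The main obstacle is the construction of the $\nu$-dominant above; once the patching of the trivial bound with Markov's bound is in place, everything else is routine functional analysis.
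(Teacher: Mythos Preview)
Your argument is correct, and it takes a genuinely different route from the paper's proof. The paper proceeds by a cascade of four truncation claims: it successively reduces to the case where $\varphi$ and $\varphi_n$ are uniformly compactly supported, then $\varphi\in L^\infty$, then $g\in L^\infty$, then $\sup_n\|\varphi_n\|_{L^\infty}<+\infty$; only after all this does it invoke Lemma~\ref{crucial_inequality} together with dominated convergence on the bounded, compactly supported remainder. Your proof bypasses all of these reductions by testing directly against indicators $\mathbbm{1}_E$ and building the $L^1$ dominant $\nu\mapsto\min\bigl(\meas(E),\,C\nu^{-p}\bigr)$ from the trivial bound and Markov's inequality; the density of simple functions in $L^{p'}$ then handles general test functions in one stroke. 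Your approach is shorter and more elementary---in particular it does not rely on Lemma~\ref{crucial_inequality} at all---while the paper's approach has the advantage of reusing the machinery it has already set up and of making the role of each truncation explicit. Both identify the same obstruction at $p=1$: in your version the Markov tail $C\nu^{-p}$ fails to be integrable near infinity, and in the paper's version the Hölder estimates in the truncation steps degenerate because $q=\infty$.
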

The proof of this lemma is done in Section \ref{section_of_proof}. The two main ingredients of the proof are the inequality given at Lemma \ref{crucial_inequality} and the Lebesgue dominated convergence theorem \cite{LiebLoss}. For the case $p=1$, a counter-example is given by the sequence $\varphi_n:=\frac{1}{n}\mathbbm{1}_{[0,n]}\in L^1$ which satisfies all the hypothesis but which is not weakly converging in $L^1$ (integrate $\varphi_n$ against a constant function for instance). Nevertheless, the case $p=1$ turns out to be true for the strong version of this lemma.

\begin{Lemm}[Strong convergence in $L^p$]\label{crucial_lemma} Let $\Omega$ be a domain of $\RR^d$ and let $p\in[1,+\infty[$. Let $\varphi\in L^p_+(\Omega)$ and let $\varphi_n\in L^p_+(\Omega)$ such that
\begin{equation}\label{one hypothesis}\|\varphi_n\|_{L^p}\longrightarrow\|\varphi\|_{L^p}\end{equation} and
\begin{equation}\label{another hypothesis} \meas\Big(\{\varphi_n\geq\nu\}\triangle\{\varphi\geq\nu\}\Big)\;\xrightarrow[n\rightarrow+\infty]{}\;0,\end{equation}
for almost every $\nu>0$. Then,
$$\big\|\varphi_n-\varphi\big\|_{L^p}\;\xrightarrow[n\rightarrow+\infty]{}\;0.$$
\end{Lemm}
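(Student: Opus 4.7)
The plan is to use Lemma \ref{crucial_inequality} to reduce the statement to showing that
\begin{equation*}
I_n \;:=\; p\int_0^\infty \nu^{p-1}\meas\Big(\{\varphi\geq\nu\}\triangle\{\varphi_n\geq\nu\}\Big)\,\mathrm{d}\nu \;\xrightarrow[n\rightarrow\infty]{}\; 0.
\end{equation*}
The pointwise convergence hypothesis \eqref{another hypothesis} tells us the integrand tends to $0$ for a.e.~$\nu$, so the heart of the matter is to produce enough integrable control to pass to the limit. A direct pointwise bound fails, since we only have the rough majoration $\meas(\{\varphi\geq\nu\}\triangle\{\varphi_n\geq\nu\})\leq \meas\{\varphi\geq\nu\}+\meas\{\varphi_n\geq\nu\}$, where the second term depends on $n$.

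The idea is therefore to rewrite the symmetric difference additively via
\begin{equation*}
\meas\Big(\{\varphi\geq\nu\}\triangle\{\varphi_n\geq\nu\}\Big)\;=\;\meas\{\varphi\geq\nu\}+\meas\{\varphi_n\geq\nu\}-2\,\meas\Big(\{\varphi\geq\nu\}\cap\{\varphi_n\geq\nu\}\Big),
\end{equation*}
and to handle each piece separately. The layer-cake representation \eqref{layercake_Lp} gives
\begin{equation*}
p\int_0^\infty\!\!\nu^{p-1}\meas\{\varphi\geq\nu\}\,\mathrm{d}\nu=\|\varphi\|_{L^p}^p,\qquad p\int_0^\infty\!\!\nu^{p-1}\meas\{\varphi_n\geq\nu\}\,\mathrm{d}\nu=\|\varphi_n\|_{L^p}^p,
\end{equation*}
and the second quantity converges to $\|\varphi\|_{L^p}^p$ by hypothesis \eqref{one hypothesis}.

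The main step, and the only delicate point, is the intersection term. Observe that
\begin{equation*}
\{\varphi\geq\nu\}\cap\{\varphi_n\geq\nu\}\;=\;\{\min(\varphi,\varphi_n)\geq\nu\},
\end{equation*}
which is contained in $\{\varphi\geq\nu\}$, so the integrand is dominated by the fixed function $\nu^{p-1}\meas\{\varphi\geq\nu\}\in L^1(\RR_+,\mathrm{d}\nu)$. Moreover \eqref{another hypothesis} forces $\meas(\{\varphi\geq\nu\}\cap\{\varphi_n\geq\nu\})\to\meas\{\varphi\geq\nu\}$ for a.e.~$\nu$, since for sets of finite measure $\meas(A\triangle A_n)\to 0$ implies $\meas(A\cap A_n)\to \meas(A)$. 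Lebesgue's dominated convergence theorem then yields
\begin{equation*}
2p\int_0^\infty\!\!\nu^{p-1}\meas\Big(\{\varphi\geq\nu\}\cap\{\varphi_n\geq\nu\}\Big)\,\mathrm{d}\nu\;\xrightarrow[n\rightarrow\infty]{}\;2\|\varphi\|_{L^p}^p.
\end{equation*}

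Combining the three limits, $I_n\to\|\varphi\|_{L^p}^p+\|\varphi\|_{L^p}^p-2\|\varphi\|_{L^p}^p=0$, and Lemma \ref{crucial_inequality} concludes that $\|\varphi_n-\varphi\|_{L^p}\to 0$. The one point to verify carefully is that the set $\{\varphi\geq\nu\}$ has finite measure for a.e.~$\nu>0$ (which follows from $\varphi\in L^p$), so that the deduction $\meas(A\cap A_n)\to\meas(A)$ from $\meas(A\triangle A_n)\to 0$ is legitimate; this is the subtle point where the $L^p$ assumption is genuinely used, and it is precisely what allows the $\min$-trick to furnish an $n$-independent dominant.
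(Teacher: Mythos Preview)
Your argument is correct, and it is genuinely different from the paper's proof. The paper splits into two cases: for $p\in(1,\infty)$ it first establishes weak convergence (Lemma~\ref{before_crucial_lemma}) through a sequence of reductions to bounded, compactly supported data, and then upgrades to strong convergence via the Radon--Riesz property; the case $p=1$ is treated separately by a truncation argument. Your proof, by contrast, handles all $p\in[1,\infty)$ at once with a single application of dominated convergence, avoiding both Radon--Riesz and the case distinction.

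The key observation you exploit --- that $\{\varphi\geq\nu\}\cap\{\varphi_n\geq\nu\}\subseteq\{\varphi\geq\nu\}$ furnishes an $n$-independent, $\nu^{p-1}$-integrable dominant for the intersection term --- is exactly what makes the decomposition
\[
\meas(A\triangle A_n)=\meas A+\meas A_n-2\meas(A\cap A_n)
\]
effective here, whereas the na\"ive bound $\meas(A\triangle A_n)\leq\meas A+\meas A_n$ does not. This is a cleaner route than the paper's; the paper's approach has the side benefit of proving the intermediate weak-convergence Lemma~\ref{before_crucial_lemma} (used independently for Corollary~\ref{compactness_weak}), but for Lemma~\ref{crucial_lemma} itself your argument is shorter and more transparent.
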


By the Radon-Riesz lemma \cite{Brezis}, a sequence $(\varphi_n)$ weakly converging towards $\varphi$ in $L^p$ for $p\in(1,+\infty)$ is strongly converging if and only if the $L^p$ norm of $\varphi_n$ converges towards the $L^p$ norm of $\varphi$. The proof in the case $p\neq1$ therefore follows from Lemma \ref{before_crucial_lemma} with the Radon-Riesz lemma. The case $p=1$ is proved in Section \ref{section_of_proof}. Note that Hypothesis \eqref{another hypothesis} directly involves the super-level sets of the manipulated functions, while Hypothesis \eqref{one hypothesis} is easy to verify using the layer-cake representation \eqref{layercake_Lp}. This makes Lemma 3.5 be a good tool in the context of rearrangements.

This lemma is false in the case $p=+\infty$. Consider for instance the sequence  $\varphi_n:=\mathbbm{1}_{[0,1+1/n]}$ as a counter-example.

\subsection{Topological results in $L^p_+(\RR_+)$.}

\subsubsection{Compactness result for the tamping in $L^p_+(\RR_+)$}
Before starting our result of convergence for the tamping, we provide some compactness results for the tamping in $L^p_+(\RR_+)$.

\begin{Lemm}[Compactness of the super-level sets]\label{compactness_result} Let $p\in[1,+\infty)$ and let $(\varphi_n)\in L^p_+(\RR_+)$ be a bounded sequence in $L^p$ such that
\begin{equation}
\exists\;\mu>0,\qquad\limsup\limits_{n\to+\infty}\;x_\mu(\varphi_n)\;<\;+\infty.
\end{equation}
Then there exists a function $\psi$ such that, up to an omitted extraction and for almost every $\nu>0$,
\begin{equation}
\meas\Big(\{\varphi_n^\natural\geq\nu\}\triangle\{\psi\geq\nu\}\Big)\;\xrightarrow[n\to+\infty]{}\;0.
\end{equation}
\end{Lemm}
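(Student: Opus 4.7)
The plan is to exploit the unimodality of each tamped function. By Definition~\ref{definition_tamping_2}, every super-level set of $\varphi_n^\natural$ is a closed interval
$$\{\varphi_n^\natural\geq\nu\}\;=\;[a_n(\nu),b_n(\nu)],\qquad a_n(\nu):=y_\nu(\varphi_n),\quad b_n(\nu):=y_\nu(\varphi_n)+\meas\{\varphi_n\geq\nu\},$$
and the nested inclusion \eqref{the_inclusion} forces $a_n$ to be non-decreasing and $b_n$ non-increasing on $(0,+\infty)$. The question therefore reduces to a pointwise-convergence-up-to-extraction statement for two sequences of monotone functions, and the natural tool is Helly's selection theorem.

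First I would establish pointwise uniform bounds on $a_n$ and $b_n$. For $\nu\leq\mu$, the inequality $a_n(\nu)=y_\nu(\varphi_n)\leq x_\nu(\varphi_n)\leq x_\mu(\varphi_n)$ together with the hypothesis $\limsup x_\mu(\varphi_n)<+\infty$ yields a uniform bound. For $\nu>\mu$, the monotonicity of $a_n$ together with $a_n(\nu)\leq b_n(\mu)\leq x_\mu(\varphi_n)+\meas\{\varphi_n\geq\mu\}$ and the Chebyshev bound $\meas\{\varphi_n\geq\mu\}\leq M^p/\mu^p$ (coming from $\|\varphi_n\|_{L^p}\leq M$) again yields a uniform bound. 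The same argument controls $b_n(\nu)=a_n(\nu)+\meas\{\varphi_n\geq\nu\}\leq a_n(\nu)+M^p/\nu^p$ pointwise for every $\nu>0$. A diagonal extraction based on Helly's selection theorem (applied on the compact intervals $[1/k,k]$) then produces a subsequence along which $a_n(\nu)\to a(\nu)$ and $b_n(\nu)\to b(\nu)$ for every $\nu\in(0,+\infty)$ outside the at most countable set of discontinuities of the monotone limits $a,b$.

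The candidate function is
$$\psi(x)\;:=\;\sup\bigl\{\nu>0\;:\;a(\nu)\leq x\leq b(\nu)\bigr\},\qquad\sup\emptyset=0.$$
The monotonicity of $a$ and $b$ gives the nested inclusions $\{\psi>\nu\}\subseteq[a(\nu),b(\nu)]\subseteq\{\psi\geq\nu\}$, so that $\{\psi\geq\nu\}=[a(\nu),b(\nu)]$ up to a Lebesgue-null set for every $\nu$ such that $\meas\{\psi=\nu\}=0$, which holds for almost every $\nu$. For each such $\nu$ at which moreover $a_n(\nu)\to a(\nu)$ and $b_n(\nu)\to b(\nu)$, the elementary bound on the symmetric difference of two intervals gives
$$\meas\Bigl([a_n(\nu),b_n(\nu)]\triangle[a(\nu),b(\nu)]\Bigr)\;\leq\;|a_n(\nu)-a(\nu)|+|b_n(\nu)-b(\nu)|\;\xrightarrow[n\to\infty]{}\;0,$$
which is exactly the conclusion of the lemma.

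The main subtlety I expect is purely set-theoretic: ensuring that $\psi$ is genuinely measurable and that its super-level sets really agree with $[a(\nu),b(\nu)]$ modulo null sets. This is where the almost-everywhere continuity of the monotone limits $a$ and $b$ is crucial, and it can be made rigorous by passing to right-continuous representatives of $a,b$ and computing the sup in the definition of $\psi$ along a countable dense set of levels.
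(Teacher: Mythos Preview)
Your proof is correct and follows essentially the same route as the paper's. The paper carries out by hand what you invoke as Helly's selection theorem: it uses Bolzano--Weierstrass plus a diagonal extraction over a countable dense set of levels $A\subset\RR_+^\ast$ to make the two endpoints of the intervals $\{\varphi_n^\natural\geq\nu\}$ converge, defines $\psi(x):=\sup\{\nu\in A:x\in T_\nu\}$, and then extends the convergence from $A$ to almost every $\nu$ via the same monotonicity-and-squeezing argument you sketch (excluding the countably many levels where $\meas\{\psi=\nu\}\neq0$). One minor slip: in your bound for $a_n(\nu)$ with $\nu>\mu$, it is the monotonicity of $b_n$ (or the nested inclusion $[a_n(\nu),b_n(\nu)]\subseteq[a_n(\mu),b_n(\mu)]$) that gives $a_n(\nu)\leq b_n(\mu)$, not the monotonicity of $a_n$.
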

The proof of this lemma is provided in Section \ref{section_of_proof}. Now, combining Lemma \ref{before_crucial_lemma} and Lemma \ref{compactness_result} we get the following corollary.
\begin{Coro}[Weak compactness for the tamping]\label{compactness_weak}
Let $p\in(1,+\infty)$ and let $(\varphi_n)\in L^p_+(\RR_+)$ be a bounded sequence such that
\begin{equation}\label{the_compactness_hypothesis}
\exists\;\mu>0,\qquad\limsup\limits_{n\to+\infty}\;x_\nu(\varphi_n)\;<\;+\infty.
\end{equation}
Then the sequence $(\varphi_n^\natural)$ is weakly compact in $L^p$.
\end{Coro}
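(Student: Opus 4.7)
The plan is to combine Lemma \ref{compactness_result} with Lemma \ref{before_crucial_lemma}: the former supplies a subsequence of $(\varphi_n^\natural)$ whose super-level sets converge in symmetric difference to those of some $\psi$, and the latter upgrades this into weak $L^p$ convergence.

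First I will note that tamping is by construction a rearrangement in the sense of Definition \ref{define_rearrangement} (every super-level set is merely moved, never reshaped), so Proposition \ref{semilinearities0} (equation \eqref{semilinearities2}) gives $\|\varphi_n^\natural\|_{L^p} = \|\varphi_n\|_{L^p}$, which is uniformly bounded by hypothesis. The compactness assumption \eqref{the_compactness_hypothesis} together with this boundedness in $L^p$ is exactly what Lemma \ref{compactness_result} requires, so up to extraction there exists a non-negative measurable function $\psi$ with
$$\meas\Big(\{\varphi_n^\natural \geq \nu\} \triangle \{\psi \geq \nu\}\Big)\;\xrightarrow[n\to\infty]{}\;0 \qquad \text{for a.e. } \nu > 0.$$

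The main technical obstacle, and the step I expect to require the most care, is to verify that $\psi$ lies in $L^p_+(\RR_+)$ before invoking Lemma \ref{before_crucial_lemma}: that lemma takes $\psi \in L^p_+$ as a standing assumption. For almost every $\nu > 0$, Chebyshev's inequality applied to $\|\varphi_n^\natural\|_{L^p}$ forces $\meas\{\varphi_n^\natural \geq \nu\} < +\infty$ uniformly in $n$; combined with the symmetric-difference convergence above, this implies that $\meas\{\psi \geq \nu\} < +\infty$ and $\meas\{\varphi_n^\natural \geq \nu\} \to \meas\{\psi \geq \nu\}$ for a.e. $\nu$. Fatou's lemma applied inside the layer-cake formula \eqref{layercake_Lp} then yields $\|\psi\|_{L^p}^p \leq \liminf_n \|\varphi_n^\natural\|_{L^p}^p < +\infty$, so indeed $\psi\in L^p_+(\RR_+)$.

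With $\psi \in L^p_+$ in hand, the boundedness of $(\varphi_n^\natural)$ in $L^p$, and the symmetric-difference convergence of its super-level sets, Lemma \ref{before_crucial_lemma} applies (this is where the hypothesis $p > 1$ is essential, as the counter-example $\frac{1}{n}\mathbbm{1}_{[0,n]}$ noted after that lemma shows) and directly yields $\varphi_n^\natural \rightharpoonup \psi$ weakly in $L^p$ along the extracted subsequence. Since both the compactness hypothesis \eqref{the_compactness_hypothesis} and the $L^p$-boundedness are stable under passage to subsequences, the same argument applied to any subsequence of $(\varphi_n^\natural)$ extracts a further weakly convergent sub-subsequence. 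This is exactly weak sequential compactness of $(\varphi_n^\natural)$ in $L^p(\RR_+)$.
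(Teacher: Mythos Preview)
Your proof is correct and follows exactly the approach the paper sketches in one line (``combining Lemma \ref{before_crucial_lemma} and Lemma \ref{compactness_result}''); you have simply filled in the details the paper omits, in particular the verification that the limit function $\psi$ lies in $L^p_+(\RR_+)$, which is indeed a hypothesis of Lemma \ref{before_crucial_lemma} that the paper leaves implicit.
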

Hypothesis \eqref{the_compactness_hypothesis} may look quite technical and little intuitive. Nevertheless, in the case of a sequence of functions $(\varphi_n)$ whose support remains contained in a given compact $K$, this hypothesis is automatically verified if and only if $(\varphi_n)$ does not converges to the null function. Since the case of the convergence towards the null function is easy to handle with, we can conclude that the following corollary holds.
\begin{Coro}[Weak compactness for the tamping on compact support]\label{compactness_weak_compact}
Let $p\in[1,+\infty)$ and let $(\varphi_n)\in L^p_+(\RR_+)$ be a bounded sequence in $L^p$. We suppose that there exists $K\subseteq\RR_+$ a compact such that
\begin{equation}
\forall n\in\NN,\quad\supp\varphi_n\;\subseteq\;K.
\end{equation}
Then the sequence $(\varphi_n^\natural)$ is weakly compact in $L^p$.
\end{Coro}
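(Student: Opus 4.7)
The plan is to run a dichotomy on the $L^p$-norm of the initial sequence, using the compact-support hypothesis to trigger Corollary \ref{compactness_weak} in the non-trivial case. Since the tamping is a rearrangement, Proposition \ref{semilinearities0} gives $\|\varphi_n^\natural\|_{L^p}=\|\varphi_n\|_{L^p}$, so $(\varphi_n^\natural)$ is bounded in $L^p$. Moreover, the support hypothesis implies $\meas\{\varphi_n\geq\nu\}\leq \meas K$ for every $\nu>0$, and Definition \ref{definition_tamping_2} then forces $\supp\varphi_n^\natural\subseteq[0,\meas K]$.

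First I would dispose of the trivial case. If $\liminf_n\|\varphi_n\|_{L^p}=0$, then along a subsequence $\|\varphi_n^\natural\|_{L^p}=\|\varphi_n\|_{L^p}\to 0$, so $\varphi_n^\natural\to 0$ strongly, hence weakly, in $L^p$; this yields a weakly convergent subsequence, as required.

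Otherwise, extract a subsequence along which $\|\varphi_n\|_{L^p}\geq c>0$ for some constant $c$. The layer-cake representation \eqref{layercake_Lp} combined with the bound $\meas\{\varphi_n\geq\nu\}\leq\meas K$ gives
\begin{equation*}
c^p\;\leq\;\|\varphi_n\|_{L^p}^p\;=\;p\int_0^\infty\nu^{p-1}\,\meas\{\varphi_n\geq\nu\}\,\mathrm{d}\nu.
\end{equation*}
Splitting the integral at a level $M$ large enough that $p\int_M^\infty\nu^{p-1}\meas K\,\mathrm{d}\nu$ is swallowed (one instead uses the crude tail estimate $\meas\{\varphi_n\geq\nu\}\leq \|\varphi_n\|_{L^p}^p/\nu^p$) and then bounding $\meas\{\varphi_n\geq\nu\}\leq\meas\{\varphi_n\geq\mu\}$ for $\nu\leq\mu\leq M$, one exhibits $\mu>0$ (independent of $n$) with $\meas\{\varphi_n\geq\mu\}\geq\delta>0$ uniformly. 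Since $\supp\varphi_n\subseteq K$, this forces $x_\mu(\varphi_n)\leq\sup K<+\infty$, so hypothesis \eqref{the_compactness_hypothesis} is met, and Corollary \ref{compactness_weak} delivers the weak compactness of $(\varphi_n^\natural)$ when $p\in(1,+\infty)$.

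The main obstacle is the borderline case $p=1$, for which reflexivity fails and Corollary \ref{compactness_weak} cannot be invoked directly. There I would instead combine Lemma \ref{compactness_result} with Lemma \ref{crucial_lemma}: the former provides, up to extraction, a function $\psi$ such that $\meas(\{\varphi_n^\natural\geq\nu\}\triangle\{\psi\geq\nu\})\to 0$ for almost every $\nu$, and since $\meas\{\varphi_n^\natural\geq\nu\}\leq\meas K$ uniformly with the integrand concentrated on $\nu\lesssim\sup_n\|\varphi_n\|_{L^\infty}$-type tails controlled via the $L^1$ boundedness, the dominated convergence theorem applied to \eqref{layercake_Lp} would give $\|\varphi_n^\natural\|_{L^1}\to\|\psi\|_{L^1}$; Lemma \ref{crucial_lemma} then upgrades the super-level convergence to $L^1$-strong convergence, hence weak compactness. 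The non-trivial technical point here is justifying the domination, which ultimately rests on the uniform tightness afforded by $\supp\varphi_n^\natural\subseteq[0,\meas K]$ together with the uniform $L^1$-bound.
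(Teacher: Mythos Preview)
For $p\in(1,+\infty)$ your approach is exactly the one the paper sketches: split off the trivial case where (a subsequence of) $\|\varphi_n\|_{L^p}$ tends to zero, and in the remaining case use the compact support to verify hypothesis \eqref{the_compactness_hypothesis} and invoke Corollary \ref{compactness_weak}. Your extraction of a uniform level $\mu$ with $\meas\{\varphi_n\geq\mu\}\geq\delta>0$ is more explicit than the paper, but the content is the same; note that the much shorter observation ``$\supp\varphi_n\subseteq K$ forces $x_\mu(\varphi_n)\leq\sup K$ whenever $\{\varphi_n\geq\mu\}\neq\emptyset$'' already suffices once one knows the level set is nonempty.

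The case $p=1$, however, is a genuine problem, and your proposed fix does not close it. Your plan is to apply dominated convergence to the layer-cake integrals $\int_0^\infty\meas\{\varphi_n^\natural\geq\nu\}\,\mathrm{d}\nu$ in order to obtain $\|\varphi_n^\natural\|_{L^1}\to\|\psi\|_{L^1}$, and you correctly flag the domination as ``the non-trivial technical point.'' In fact no such domination is available: the only uniform bounds are $\meas\{\varphi_n^\natural\geq\nu\}\leq\meas K$ and the Chebyshev tail $\meas\{\varphi_n^\natural\geq\nu\}\leq C/\nu$, and neither is integrable on $(0,\infty)$. More to the point, the conclusion itself fails for $p=1$: take $\varphi_n=n\,\mathbbm{1}_{[0,1/n]}$, which is bounded in $L^1$, supported in $[0,1]$, and already tamped (its super-level sets are intervals containing $0$, so $\cH_\infty(\varphi_n)=\emptyset$ and $\varphi_n^\natural=\varphi_n$). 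Testing against $g=\mathbbm{1}_{[0,1]}$ gives $\int g\,\varphi_n^\natural=1$, while testing against $g=\mathbbm{1}_{[\varepsilon,1]}$ gives $\int g\,\varphi_n^\natural=0$ for $n>1/\varepsilon$; hence no subsequence converges weakly in $L^1$. The range in the statement should therefore be $p\in(1,+\infty)$, matching Corollary \ref{compactness_weak}, and the $p=1$ endpoint is simply outside the scope of this corollary.
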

If we now go back to Lemma~\ref{compactness_result} and combine it with  Lemma \ref{crucial_lemma} and the fact that $\|\varphi_n\|_{L^p}\!\!~=~\!\|\varphi_n^\natural\|_{L^p}$, we get the following result for strong compactness.
\begin{Coro}[Strong compactness for the tamping]\label{compactness_strong} Let $p\in[1,+\infty)$. Let $(\varphi_n)\in L^p_+(\RR_+)$ such that $\varphi_n$ converges in $L^p$ towards a function $\varphi$.
Then, up to an extraction, the sequence $\varphi_n^\natural$ converges strongly in $L^p$ towards a function $\psi$ that verifies 
\begin{equation}
\meas\{\psi\geq\nu\}\;=\;\meas\{\varphi\geq\nu\}.
\end{equation}
\end{Coro}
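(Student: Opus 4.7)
The plan is to combine Lemma \ref{compactness_result} with Lemma \ref{crucial_lemma}, as indicated by the preamble of the statement. The only nontrivial work is to verify that the compactness hypothesis of Lemma \ref{compactness_result} is met. If $\varphi \equiv 0$, then by equimeasurability of the tamping (Proposition \ref{semilinearities0}), $\|\varphi_n^\natural\|_{L^p} = \|\varphi_n\|_{L^p} \to 0$, so $\varphi_n^\natural \to 0$ strongly and $\psi = 0$ works. Assume now $\varphi \not\equiv 0$. By Corollary \ref{super-level_cv}, up to extraction, $\meas(\{\varphi_n \geq \nu\} \triangle \{\varphi \geq \nu\}) \to 0$ for a.e. $\nu > 0$. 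Pick $\mu_0$ in this full-measure set of levels with $\meas\{\varphi \geq \mu_0\} > 0$, and choose $M > 0$ with $\meas(\{\varphi \geq \mu_0\} \cap [0,M]) > 0$. Since
\[
\Big|\meas\big(\{\varphi_n \geq \mu_0\} \cap [0,M]\big) - \meas\big(\{\varphi \geq \mu_0\} \cap [0,M]\big)\Big| \leq \meas\Big(\{\varphi_n \geq \mu_0\} \triangle \{\varphi \geq \mu_0\}\Big),
\]
the right-hand side going to zero forces $\meas(\{\varphi_n \geq \mu_0\} \cap [0,M]) > 0$ for all $n$ large, hence $x_{\mu_0}(\varphi_n) \leq M$ eventually. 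The hypothesis of Lemma \ref{compactness_result} is therefore satisfied.

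Applying Lemma \ref{compactness_result}, a further extraction produces $\psi \in L^p_+(\RR_+)$ with $\meas(\{\varphi_n^\natural \geq \nu\} \triangle \{\psi \geq \nu\}) \to 0$ for a.e. $\nu > 0$. Since the tamping preserves the measure of super-level sets, $\meas\{\varphi_n^\natural \geq \nu\} = \meas\{\varphi_n \geq \nu\}$, and the latter converges to $\meas\{\varphi \geq \nu\}$ for a.e. $\nu$ (again via the symmetric-difference estimate from Corollary \ref{super-level_cv}, using that all these measures are finite by Markov's inequality). Passing to the limit in the triangle-type inequality above then yields the equimeasurability
\[
\meas\{\psi \geq \nu\} \;=\; \meas\{\varphi \geq \nu\} \qquad \text{for a.e. } \nu > 0,
\]
which is precisely the conclusion announced in the statement.

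The strong convergence now follows from Lemma \ref{crucial_lemma}. The layer-cake identity \eqref{layercake_Lp} combined with the equimeasurability just proved gives $\|\psi\|_{L^p} = \|\varphi\|_{L^p}$. Together with $\|\varphi_n^\natural\|_{L^p} = \|\varphi_n\|_{L^p} \to \|\varphi\|_{L^p} = \|\psi\|_{L^p}$ and the convergence $\meas(\{\varphi_n^\natural \geq \nu\} \triangle \{\psi \geq \nu\}) \to 0$ a.e., Lemma \ref{crucial_lemma} (valid throughout $p \in [1,+\infty)$) concludes that $\varphi_n^\natural \to \psi$ strongly in $L^p$.

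The main obstacle is the verification of the compactness hypothesis of Lemma \ref{compactness_result}: pure $L^p$ convergence does not control $x_\mu(\varphi_n)$, and the degenerate case $\varphi \equiv 0$ must be treated separately. Once the nontriviality of a fixed super-level set $\{\varphi_n \geq \mu_0\} \cap [0,M]$ is secured through Corollary \ref{super-level_cv}, the rest of the argument is a routine chaining of the already-established lemmas.
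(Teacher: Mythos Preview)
Your proof is correct and follows exactly the route the paper indicates: combine Lemma~\ref{compactness_result} with Lemma~\ref{crucial_lemma} and the norm preservation $\|\varphi_n^\natural\|_{L^p}=\|\varphi_n\|_{L^p}$. The paper's own argument is a one-line sketch, and you have correctly supplied the details it omits, namely the separate treatment of $\varphi\equiv 0$ and the verification of the hypothesis $\limsup_n x_{\mu_0}(\varphi_n)<+\infty$ via Corollary~\ref{super-level_cv}.
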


Without more hypothesis on the sequence $(\varphi_n)$, we cannot obtain convergence of the sequence $(\varphi_n^\natural)$ towards the function $\varphi^\natural$. We have for instance the counter-example (see Figure \ref{contre_exemple_1}),
$$\left\{\begin{array}{l}\varphi(x):=\mathbbm{1}_{[1,2]}(x),\\\\
\varphi_n(x):=\mathbbm{1}_{[1,2]}(x)+\mathbbm{1}_{\left[0,\,\frac{1}{n}\right]}(x).
\end{array}\right. $$
\begin{figure}[h]\centering
\includegraphics[width=11cm]{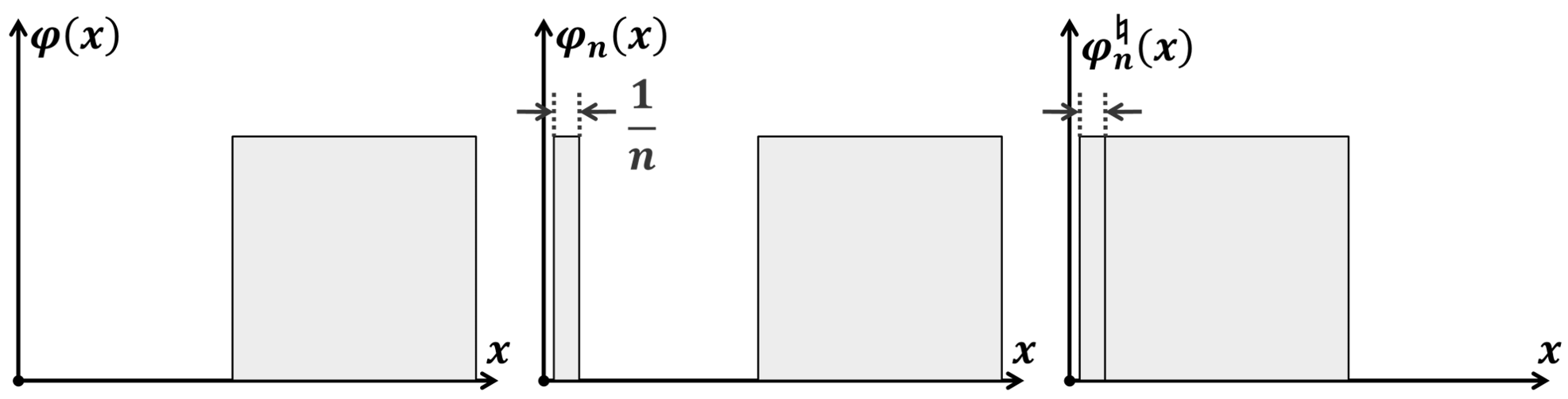}
\caption{\label{contre_exemple_1} Illustration of the first counter-example for the convergence.}
\end{figure}\newline
The sequence $(\varphi_n)$ converges towards $\varphi$ in $L^p$ whereas this is not the case for $(\varphi_n^\natural)$ which converges towards a different function in $L^p_+(\RR_+)$.

Another interesting counter-example is the following (see Figure \ref{contre_exemple_2}), 
$$\left\{\begin{array}{l}\varphi(x):=\mathbbm{1}_{[0,2]}(x)+2.\mathbbm{1}_{[2,3]}(x),\\\\
\varphi_n(x):=\mathbbm{1}_{[0,1]}(x)+\left(1-\frac{1}{n}\right)\mathbbm{1}_{[1,2]}(x)+2.\mathbbm{1}_{[2,3]}(x).
\end{array}\right. $$
\begin{figure}[h]\centering
\includegraphics[width=11cm]{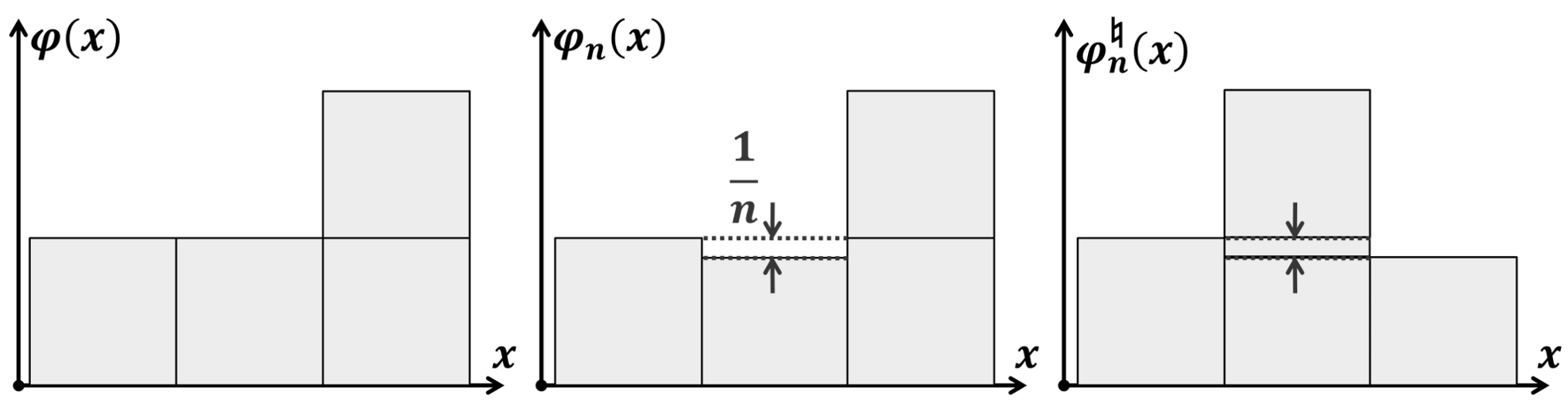}
\caption{\label{contre_exemple_2} Illustration of the second counter-example for the convergence.}
\end{figure}\newline
Once again we choose a function $\varphi$ that is already tamped and we exhibit a sequence $\varphi_n$ that is converging towards the function $\varphi$ but such that the sequence of tamped functions $\varphi_n^\natural$ is not converging towards the tamped function $\varphi^\natural$ ($=\varphi$ here). 

The main common point between these two counter-examples that may seem quite different at first sight is that in both cases the approximating functions are in such a way that, roughly speaking, ``\emph{there is a new hollow that appears in front of the big bump}'' and this hollow never vanishes as $n$ grows. Although rough this observation may seem, this is the key idea to obtain a sufficient condition for convergence towards $\varphi^\natural$.

\subsubsection{A convergence result for the tamping}
\begin{Theo}[Convergence result] \label{continuity_theorem_1D} Let $p\in[1,+\infty)$ and let $(\varphi_n)_{n\in\NN}$ in  $L^p_+(\RR_+)$ converging towards $\varphi$. We suppose that we have the \emph{local hollows convergence condition}, that is
\begin{equation}\label{44}
\forall a>0,\qquad\meas\Big((\cH_\infty(\varphi_n)\triangle\cH_\infty(\varphi))\cap[0,a]\Big)\;\xrightarrow[n\rightarrow+\infty]{}\;0.
\end{equation}
Then, up to an extraction we have
\begin{equation}\label{45}\|\varphi_n^\natural-\varphi^\natural\|_{L^p}\;\xrightarrow[n\rightarrow+\infty]{}\;0.\end{equation}
Moreover, if for almost every $\nu\geq0$
\begin{equation}
\meas\Big(\{\varphi_n\geq\nu\}\triangle\{\varphi\geq\nu\}\Big)\;\xrightarrow[n\rightarrow+\infty]{}\;0,\label{46}
\end{equation}
then it it is also the case for $(\varphi_n^\natural)$ and the full sequence converges in $L^p$ towards $\varphi^\natural$.
\end{Theo}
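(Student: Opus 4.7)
The plan is to establish the super-level set convergence
\[ \meas\Big(\{\varphi_n^\natural\geq\nu\}\triangle\{\varphi^\natural\geq\nu\}\Big)\;\xrightarrow[n\to\infty]{}\;0 \]
for almost every $\nu>0$ and then conclude by Lemma \ref{crucial_lemma}. The norm hypothesis of that lemma comes for free: the tamping is a rearrangement, so Proposition \ref{semilinearities0} gives $\|\varphi_n^\natural\|_{L^p}=\|\varphi_n\|_{L^p}\to\|\varphi\|_{L^p}=\|\varphi^\natural\|_{L^p}$. All the work therefore lies in the super-level set convergence.

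First I invoke Corollary \ref{super-level_cv} to extract a subsequence along which $\meas(\{\varphi_n\geq\nu\}\triangle\{\varphi\geq\nu\})\to 0$ for almost every $\nu$; in particular $\meas\{\varphi_n\geq\nu\}\to\meas\{\varphi\geq\nu\}$. Since $\{\psi^\natural\geq\nu\}$ is by Definition \ref{definition_tamping_2} the segment $[y_\nu(\psi),\;y_\nu(\psi)+\meas\{\psi\geq\nu\}]$, the target convergence reduces to $y_\nu(\varphi_n)\to y_\nu(\varphi)$ for a.e.\ $\nu$. Decomposing $y_\nu(\psi)=x_\nu(\psi)-\meas(\cH_\infty(\psi)\cap[0,x_\nu(\psi)])$, this amounts to proving (a) $x_\nu(\varphi_n)\to x_\nu(\varphi)$, and (b) $\meas(\cH_\infty(\varphi_n)\cap[0,x_\nu(\varphi_n)])\to\meas(\cH_\infty(\varphi)\cap[0,x_\nu(\varphi)])$. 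Granted (a), statement (b) follows from a standard splitting combining the Lipschitz continuity of $a\mapsto\meas(\cH_\infty(\varphi_n)\cap[0,a])$ with the local hollows convergence hypothesis \eqref{44} applied for any $a$ exceeding $\sup_n x_\nu(\varphi_n)$.

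The heart of the proof is thus claim (a). The easier direction $\limsup x_\nu(\varphi_n)>x_\nu(\varphi)$: along any subsequence with $x_\nu(\varphi_n)\to x^*>x_\nu(\varphi)$, one has $\meas(\{\varphi_n\geq\nu\}\cap[0,x^*-\varepsilon])=0$ eventually, and the super-level set convergence of Step 1 then contradicts the fact that $x_\nu(\varphi)$ is an essential infimum strictly below $x^*-\varepsilon$. The delicate direction is $\liminf x_\nu(\varphi_n)<x_\nu(\varphi)$: the $L^p$-convergence concentrates the bulk of $\{\varphi_n\geq\nu\}$ near $\{\varphi\geq\nu\}$, which by definition of $x_\nu(\varphi)$ essentially lives to the right of $x_\nu(\varphi)$, while $\{\varphi_n\geq\nu\}$ still has some mass near $x_\nu(\varphi_n)$. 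The gap between these two clusters is an interval of length uniformly bounded below which constitutes a hollow of $\varphi_n$ located in $[0,x_\nu(\varphi)+1]$; no hollow of comparable location and size sits in $\cH_\infty(\varphi)$ at the same place, contradicting \eqref{44}. This is precisely the failure mechanism displayed by Figures~\ref{contre_exemple_1} and~\ref{contre_exemple_2}, which hypothesis \eqref{44} is designed to exclude.

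For the moreover part, the stronger hypothesis \eqref{46} removes the need for the extraction in Corollary \ref{super-level_cv}; the entire argument above applies directly to the full sequence, \eqref{46} propagates to $(\varphi_n^\natural)$, and Lemma \ref{crucial_lemma} then yields strong $L^p$-convergence of the full sequence to $\varphi^\natural$.
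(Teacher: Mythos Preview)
Your overall strategy --- reduce to $y_\nu(\varphi_n)\to y_\nu(\varphi)$ for a.e.\ $\nu$ and then invoke Lemma~\ref{crucial_lemma} --- is correct and matches the paper. Your derivation of (b) from (a) is fine. The gap is in your argument for (a) in the ``delicate direction''.

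You assert that when $\liminf_n x_\nu(\varphi_n)<x_\nu(\varphi)$, the gap between the small piece of $\{\varphi_n\geq\nu\}$ near $x_\nu(\varphi_n)$ and the bulk near $x_\nu(\varphi)$ is a hollow of $\varphi_n$ for which ``no hollow of comparable location and size sits in $\cH_\infty(\varphi)$''. This last claim is unjustified and false in general. Take the sequence from the Remark following the paper's proof,
\[
\varphi_n^1(x)=\Big(1+\tfrac{1}{n}\Big)\sin(\pi x)\,\mathbbm{1}_{[0,1]}(x)+2\,\mathbbm{1}_{[1,2]}(x),\qquad
\varphi(x)=\sin(\pi x)\,\mathbbm{1}_{[0,1]}(x)+2\,\mathbbm{1}_{[1,2]}(x).
\]
Here $\cH_\infty(\varphi_n^1)=\cH_\infty(\varphi)=(\tfrac12,1)$ for every $n$, so \eqref{44} holds trivially, yet at the level $\nu=1$ one has $x_1(\varphi_n^1)\to\tfrac12$ while $x_1(\varphi)=1$. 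The gap you produce is exactly $(\tfrac12,1)$, which \emph{is} already a hollow of $\varphi$; no contradiction with \eqref{44} arises. Thus your mechanism does not force (a), and you give no reason why the set of $\nu$ where (a) fails should be null.

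The paper does not try to prove (a) at all. Its key observation is that the possible failure of $x_\nu(\varphi_n)\to x_\nu(\varphi)$ is \emph{exactly} compensated by the hollows subtraction in $y_\nu=x_\nu-\meas(\cH_\infty\cap[0,x_\nu])$. Concretely (Claims~2--4 of the proof), for every $\nu$ with $\meas\{\varphi=\nu\}=0$ the defect interval $X_\nu:=[\liminf_n x_\nu(\varphi_n),\,x_\nu(\varphi)]$ is shown to lie in $\cH_\infty(\varphi)$ up to a null set, using \eqref{44} together with the $L^p$ convergence. This inclusion is what makes the decomposition in Claim~5 collapse to $y_\nu(\varphi_n)\to y_\nu(\varphi)$ even when $x_\nu(\varphi_n)\not\to x_\nu(\varphi)$. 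Your proposal is missing precisely this step.
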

Since it is possible to obtain Condition \eqref{46} from Corollary \ref{super-level_cv}, it is enough to prove the last assertion. The proof is provided in Section 4. This condition \eqref{44} is actually not very surprising if we consider that the tamping is defined through the hollows of the function. If we want to converge towards the tamping of $\varphi$ then it is natural to impose that the hollows are matching asymptotically. 

We can now use this convergence theorem to obtain a convergence result on voxel functions.
\begin{Coro}\label{continuity_result_1D_voxels} Let $p\in[1,+\infty)$ and let $\varphi\in L^p_+(\RR_+)$. There exist $\varphi_n\in\widetilde{\cE_n}(\RR_+)$ such that
$$\big\|\varphi_n-\varphi\big\|_{L^p}\xrightarrow[n\rightarrow\infty]{}0,\qquad\quad
\text{and} \quad\qquad\big\|\varphi_n^\natural-\varphi^\natural\big\|_{L^p}\xrightarrow[n\rightarrow\infty]{}0.$$
This sequence also verifies the convergence of the superlevel sets
\begin{align}
\meas\Big(\{\varphi_n\geq\nu\}\triangle\{\varphi\geq\nu\}\Big)\;\xrightarrow[n\rightarrow+\infty]{}\;0,\\
\meas\Big(\{\varphi_n^\natural\geq\nu\}\triangle\{\varphi^\natural\geq\nu\}\Big)\;\xrightarrow[n\rightarrow+\infty]{}\;0,
\end{align}
for almost every $\nu>0$.
\end{Coro}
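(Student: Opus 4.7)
The plan is to produce an explicit voxel approximation $(\varphi_n)$ of $\varphi$ satisfying the local hollows convergence condition \eqref{44}, so that Theorem \ref{continuity_theorem_1D} directly yields the $L^p$ convergence of the tampings, while the super-level set convergence for $(\varphi_n^\natural)$ then follows from Corollary \ref{super-level_cv}. By a standard truncation and diagonal extraction argument, one may assume that $\varphi$ is bounded and compactly supported in $[0, L]$ with $\|\varphi\|_\infty \leq M$.

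For the approximation itself, I would discretize both axes on a grid adapted to $n$: set $\mu_n := L/n$ and $\lambda_n := M/n$, and define
$$\varphi_n(x) \;:=\; \lambda_n \Big\lfloor \tfrac{1}{\lambda_n}\, \infess_{y \in I_k^n}\varphi(y) \Big\rfloor, \qquad x \in I_k^n := [(k-1)\mu_n,\,k\mu_n),\quad 1 \leq k \leq n,$$
and $\varphi_n \equiv 0$ outside $[0, L]$. This yields $\varphi_n \in \widetilde{\cE_n}(\RR_+)$ with $0 \leq \varphi_n \leq \varphi$ pointwise. The Lebesgue differentiation theorem gives $\varphi_n \to \varphi$ almost everywhere, and dominated convergence yields $\|\varphi_n - \varphi\|_{L^p} \to 0$. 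Corollary \ref{super-level_cv}, applied after an initial extraction, then provides the super-level set convergence \eqref{46}.

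The main obstacle is verifying the hollows condition \eqref{44}, since the hollows involve essential convex hulls and are not automatically stable under $L^p$ approximation. The decisive feature of the infimum-based construction is the pointwise inclusion $\{\varphi_n \geq \nu\} \subseteq \{\varphi \geq \nu\}$ for every $\nu$. In dimension one this forces an inclusion of the essential convex hulls, and a direct computation of $\mathrm{conv\,ess}$ as an interval yields
$$\meas\Big((\cH(\{\varphi_n \geq \nu\}) \triangle \cH(\{\varphi \geq \nu\})) \cap [0, a]\Big) \;\leq\; \meas\big(\{\varphi \geq \nu\} \setminus \{\varphi_n \geq \nu\}\big) + 2\mu_n,$$
where the extra term accounts for boundary cells where $\varphi_n$ rounds down. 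The right-hand side tends to zero for almost every $\nu$ by \eqref{46} and is bounded by the integrable majorant $2\meas\{\varphi \geq \nu\} + 2\mu_n$. The passage from the slice-wise convergence $\cH(\{\varphi_n \geq \nu\}) \to \cH(\{\varphi \geq \nu\})$ to the total hollows $\cH_\infty(\cdot) = \bigcup_\nu \cH(\{\cdot \geq \nu\})$ is then carried out by a monotone convergence argument in $\nu$, exploiting that the union is monotone increasing in the level. This delivers \eqref{44}.

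Finally, Theorem \ref{continuity_theorem_1D} applied with the two hypotheses \eqref{44} and \eqref{46} gives $\|\varphi_n^\natural - \varphi^\natural\|_{L^p} \to 0$ for the full sequence, and a further application of Corollary \ref{super-level_cv} to $(\varphi_n^\natural)$ delivers the super-level set convergence for the tamped sequence. Lifting the truncation assumption by a final diagonal extraction completes the proof.
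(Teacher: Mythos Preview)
Your construction has a genuine gap: the essential-infimum approximation does \emph{not} converge to $\varphi$ almost everywhere, and the Lebesgue differentiation theorem does not help here. Lebesgue differentiation controls local \emph{averages}, not local essential infima. For a concrete counterexample, take $\varphi=\mathbbm{1}_A$ where $A\subset[0,1]$ is a measurable set such that every subinterval $I\subset[0,1]$ satisfies $0<\meas(A\cap I)<\meas(I)$ (such sets exist). Then $\infess_{I_k^n}\varphi=0$ for every cell $I_k^n$, so your $\varphi_n\equiv 0$ for all $n$ and $\|\varphi_n-\varphi\|_{L^p}\not\to 0$. The pointwise inequality $\varphi_n\leq\varphi$ on which your hollows argument relies is precisely what forces this failure: you cannot have both $\varphi_n\leq\varphi$ pointwise and $\varphi_n\to\varphi$ in $L^p$ for an arbitrary bounded compactly supported $\varphi$ when the approximation is built from cell-wise essential infima.

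The paper avoids this by taking the opposite route: it sets $\Gamma(i,j)=1$ whenever the voxel $a(i,j)$ meets the hypograph of $\varphi$ in positive measure, which amounts to a cell-wise \emph{essential supremum} (ceiling) construction. This makes $\{\varphi_n\geq\nu\}\cap[0,a]$ eventually monotone \emph{decreasing} in $n$ and $\cH(\{\varphi_n\geq\nu\})\cap[0,a]$ eventually monotone \emph{increasing}, and forces $x_\nu(\varphi_n)\to x_\nu(\varphi)$; from these monotonicities the hollows condition \eqref{44} and condition \eqref{46} follow directly, without the slice-by-slice estimate and the ``monotone convergence in $\nu$'' step you sketch (which is itself delicate, since $\lambda\mapsto\cH(\{\varphi\geq\lambda\})$ is not monotone in general). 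Finally, note that the super-level convergence for the tamped sequence is already part of the conclusion of Theorem~\ref{continuity_theorem_1D} once \eqref{46} is in place; you do not need a second appeal to Corollary~\ref{super-level_cv}.
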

\begin{proof} 
We define the voxel function $\varphi_n\in\widetilde{\cE_n}(\RR)$, following the construction (and the notations) of Section \ref{voxel_things}.
We consider cubes $a(i,j)$ of vanishing size $\lambda_n\!\times\mu_n$. Since by definition of $\widetilde{\cE_n}(\RR)$, the number of cubes is at most $n\!\times\! n$, to obtain the convergence in $L^p$ we have to impose the following decay rates conditions,
\begin{equation}
n.\lambda_n\xrightarrow[n\rightarrow\infty]{}\;+\infty\qquad\mathrm{and}\qquad
n.\mu_n\xrightarrow[n\rightarrow\infty]{}\;+\infty.
\end{equation}
Then we define the boolean function $\Gamma$ by
\begin{equation}
\Gamma(i,j)\;:=\;\left\{\begin{array}{cl}
0&\mathrm{if\;}\meas(a(i,j)\cap\mathrm{hypo}(\varphi))=0,\\
1&\mathrm{otherwise},
\end{array} 
\right.\end{equation}
where the \emph{hypograph} of a function $\varphi \in \mfM_+(\RR_+)$ is given by
$$
\mathrm{hypo}(\varphi):=\big\{(x,y)\in\RR_+\times\RR:\varphi(x)\leq y\}.
$$
 
For every $a>0$, by construction we have that the sequence $\{\varphi_n\geq\nu\}\cap[0,\,a]$ is non-increasing (for the inclusion) and $\cH(\{\varphi_n\geq\nu\})\cap[0,\,a]$ is non-decreasing when $n$ is large enough (depending on $\nu$ and $a$). We also have $x_\nu(\varphi_n)$ converging towards $x_\nu(\varphi)$. The hypothesis of Theorem \ref{continuity_theorem_1D}, including \eqref{46}, are then verified and the conclusion follows.
\end{proof}

\subsection{Pólya–Szegő inequality for the tamping}\label{subsection24}
We show in this section that it is possible to decrease the $L^p$ norm of the derivative, also called the $W^{1,p}$ half-norm, using the rearrangement by tamping. In other words, we prove that the rearrangement by tamping satisfies a Pólya–Szegő inequality.
\subsubsection{Piece-wise linear approximation}
Since the tamping is constructed with functions in $\cE_n(\RR)$, whose derivatives are not in $L^p$, these half-norms are computed by approximation. For a function $\varphi_n\in\cE_n(\RR)$, we define its piece-wise linear and continuous approximation by: 
\begin{equation}\label{define_piece_wise_linear} (\Lambda\varphi_n)(x):=\sum_{i=0}^{n}\Big[\varphi(i+1).\Big(x-i\Big)+\varphi(i).\Big((i+1)-x\Big)\Big]\mathbbm{1}_{\left[i,i+1\right]}(x).
\end{equation}
The operator $\Lambda$ is a bijection between $\cE_n(\RR)$ and $\Lambda\cE_n(\RR)$. The operator $\Lambda$ extends to $\widetilde{\cE_n}(\RR)$ with a natural definition. The functions $\varphi_n\in\cE_n(\RR)$ and $\Lambda\varphi_n$ coincide on the set $\displaystyle\ZZ$. When $n$ goes to $\infty$, the sequences $(\varphi_n)$ and $(\Lambda\varphi_n)$ are such that if one converges then the other converges towards the same limit in $L^p$. Moreover, if we choose $(\varphi_n)$ approximating a given function $\varphi\in W^{1,p}$, we can expect, using standard theory on piece-wise linear continuous approximation \cite{CianchiFerone}, that the sequence $(\Lambda\varphi_n)$ is converging $W^{1,p}$ towards $\varphi$.

\subsubsection{Pólya–Szegő inequality for the tamping}
We use the notation $\nabla$ for the derivative of the functions, although we work in dimension 1, in order to make the formulas easier to read.
\begin{Theo}\label{decrease_W_1_p} \indent
Let $\varphi\in W^{1,p}_+(\RR_+)$. We have
\begin{equation}\label{decrease_W_1_p_1}\int_{\cH_\infty(\varphi)}|\nabla\varphi|^p\;\leq\;\int_{\RR_+}|\nabla\varphi|^p-|\nabla\varphi^\natural|^p.\end{equation}
Or equivalently, 
\begin{equation}\label{decrease_W_1_p_2}\int_{\RR_+}|\nabla\varphi^\natural|^p\;\leq\;\int_{\RR_+\setminus\cH_\infty(\varphi)}|\nabla\varphi|^p.\end{equation}
\end{Theo}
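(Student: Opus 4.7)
\smallskip
\noindent\emph{Proof plan.}
The plan is to reduce to piecewise linear functions via the approximation result, then use the double Schwarz decomposition of Lemma \ref{double} to split $\varphi^\natural$ into its two monotone pieces and bound each separately through a distribution-function identity. The subtle point is that the decreasing piece of $\varphi^\natural$ is built from values of $\varphi$ taken on \emph{both} the hollows and the past-maximum region $(s(\varphi),+\infty)$, yet only the past-maximum part is allowed to contribute to the right-hand side of \eqref{decrease_W_1_p_2}.

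First, by Corollary \ref{continuity_result_1D_voxels} choose voxel approximations $\varphi_n$ whose piecewise-linear interpolants satisfy $\Lambda\varphi_n\to\varphi$ in $W^{1,p}$ (via standard piecewise-linear interpolation theory) and $(\Lambda\varphi_n)^\natural\to\varphi^\natural$ in $L^p$. Granted \eqref{decrease_W_1_p_2} for each $\Lambda\varphi_n$, weak lower semi-continuity of $\int|\nabla\cdot|^p$ together with the local hollows convergence \eqref{44} passes the inequality to $\varphi$, so it suffices to treat a piecewise $C^1$ function. Partition $\RR_+$ (modulo a null set) into the envelope set $E:=\{\varphi=\varphi^\dag\}$, the hollows $H:=\cH_\infty(\varphi)$, and the past-maximum set $P:=(s(\varphi),+\infty)\setminus\cH_\infty(\varphi)$; combining Step 1 of the proof of Lemma \ref{double} with the fact that $\varphi\neq\varphi^\dag$ almost everywhere on $(s(\varphi),+\infty)$ gives that these three sets are pairwise disjoint and
$\RR_+\setminus\cH_\infty(\varphi)=E\sqcup P$.
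By Lemma \ref{double} the tamped function decomposes as
\[
\int_{\RR_+}|\nabla\varphi^\natural|^p \;=\; \int_{\RR_+}|\nabla(\varphi\mathbbm{1}_E)^*|^p \;+\; \int_{\RR_+}|\nabla(\varphi\mathbbm{1}_{H\cup P})^*|^p,
\]
and the first term is bounded by $\int_E|\nabla\varphi|^p$ through the classical Pólya–Szegő inequality applied to the restriction of $\varphi$ to $E$ (valid via the coarea formulation even though $E$ may be disconnected).

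The key step concerns the second term. Writing $\mu_Z(\nu):=\meas(\{\varphi\geq\nu\}\cap Z)$ for $Z\in\{H,P\}$, both $\mu_H$ and $\mu_P$ are non-increasing in $\nu$, and the distribution function of $\varphi\mathbbm{1}_{H\cup P}$ is exactly $\mu_H+\mu_P$. Consequently $-(\mu_H+\mu_P)'(\nu)\geq -\mu_P'(\nu)\geq 0$ at almost every $\nu$, and using the layer-cake identity $\int|\nabla f^*|^p=\int_0^{+\infty}(-\mu_f'(\nu))^{1-p}\,\mathrm{d}\nu$ together with the monotonicity of $x\mapsto x^{1-p}$ on $(0,+\infty)$ for $p\geq 1$,
\[
\int_{\RR_+}|\nabla(\varphi\mathbbm{1}_{H\cup P})^*|^p \;\leq\; \int_{\RR_+}|\nabla(\varphi\mathbbm{1}_P)^*|^p \;\leq\; \int_P|\nabla\varphi|^p,
\]
the last inequality being Pólya–Szegő on the restriction to $P$. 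Adding the two bounds yields exactly \eqref{decrease_W_1_p_2}.

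The main obstacle is the first inequality in the display above: the classical Pólya–Szegő inequality would only give $\int_{H\cup P}|\nabla\varphi|^p$ on the right-hand side, and sharpening it to $\int_P|\nabla\varphi|^p$ requires recognizing that enlarging the distribution by $\mu_H$ strictly lengthens and flattens the Schwarz rearrangement, thereby \emph{decreasing} its gradient energy. A secondary technical point is to justify the classical Pólya–Szegő inequality for the restricted functions $\varphi|_E$ and $\varphi|_P$ whose supports may be disconnected; this follows because the coarea proof depends only on the distribution function and handles disconnected supports with no modification.
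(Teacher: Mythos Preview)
Your approach is genuinely different from the paper's and, as far as I can see, correct. The paper proves the inequality by working directly with the \emph{algorithm}: it tracks how the discrete $W^{1,p}$ seminorm of $\Lambda\varphi_n$ changes under a single elementary tamping move (estimate \eqref{estim_decrease_1}), then iterates over all moves, carefully accounting for the telescoping cancellations to produce the residual terms \eqref{expected_term_small_o_of_n}--\eqref{small_o_of_n}, and finally engineers a double voxel approximation $\psi_{n,k_n}$ to kill the negative error term \eqref{small_o_of_n2}. No use is made of the double Schwarz formula.

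You instead exploit Lemma~\ref{double} structurally: once $\varphi^\natural$ is written as two back-to-back Schwarz rearrangements, the increasing piece $(\varphi\mathbbm{1}_E)^*$ gives exactly $\int_E|\nabla\varphi|^p$ by a one-dimensional coarea computation (here you even get equality, since $\varphi|_E=\varphi^\dag|_E$ is monotone and hits each level once), and the decreasing piece is handled by your key monotonicity observation $(-\mu'_{H\cup P})^{1-p}\le(-\mu'_P)^{1-p}$. The one point that deserves a little more care than you indicate is the inequality $\int(-\mu'_P)^{1-p}\le\int_P|\nabla\varphi|^p$: this is \emph{not} the classical P\'olya--Szeg\H{o} inequality applied to $\varphi\mathbbm{1}_P$ (that function has jumps at $\partial P$), but rather the pointwise inequality $\big(\sum_{x\in P,\,\varphi(x)=\nu}|\nabla\varphi(x)|^{-1}\big)^{1-p}\le\sum_{x\in P,\,\varphi(x)=\nu}|\nabla\varphi(x)|^{p-1}$ integrated in $\nu$, which is valid precisely because $-\mu'_P(\nu)=\sum_{x\in P,\,\varphi(x)=\nu}|\nabla\varphi(x)|^{-1}$ via the coarea formula for $\varphi$ restricted to $P$. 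This works, but note it requires knowing that the level set $\{\varphi=\nu\}\cap P$ is nonempty for every $\nu\in(0,\sup\varphi)$; this is true (the rightmost solution of $\varphi=\nu$ always lies in $P$, since a point in $\cH_\infty(\varphi)$ must have a higher value of $\varphi$ somewhere to its right), and you should state it.

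What each approach buys: yours is conceptually cleaner and explains \emph{why} the hollows disappear from the right-hand side --- adding $\mu_H$ to the distribution flattens the rearrangement. The paper's approach is more self-contained (it does not rely on Lemma~\ref{double}) and produces along the way the explicit algorithmic residual, which is what justifies the equality case in the corollary. Both routes need essentially the same limiting argument to pass from piecewise linear to general $W^{1,p}$, and your sketch of that step is no more incomplete than the paper's own.
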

This theorem emphasizes the fact that we improve the $L^p$ norm of the derivative by ``\emph{filling the hollows}'' because the error term only involves the derivative of $\varphi$ inside the hollows of $\varphi$. 
The proof of this theorem is provided in Section \ref{section_of_proof}. It relies on the Riemannian point of view on the tamping on voxel functions $\varphi_n$ that we defined in the proof of Lemma \ref{defines_a_new_function}. Using this point of view, we can first estimate how does vary the $L^p$ norm of the derivative of $\Lambda\varphi_n$ during one step of the elementary tamping algorithm. Then, we iterate the estimate given by one step of the tamping algorithm to obtain, for a well-chosen sequence $(\varphi_n)\in\widetilde{\cE_n}(\RR)$, the inequality
\begin{equation}\int_{\cH_\infty(\varphi_n)}|\nabla\Lambda\varphi_n|^p\;\leq\;\int_{\RR_+}|\nabla\Lambda\varphi_n|^p-|\nabla\Lambda\varphi_n^\natural|^p\;+\;\underset{n\rightarrow\infty}{o}\!\!(1),\end{equation}
and we can conclude by passing to the limit $n\rightarrow\infty$.

\begin{Coro}[Pólya–Szegő inequality for the tamping] \indent
Let $\varphi\in W^{1,p}_+(\RR_+)$. We have
\begin{equation}
\int_{\RR_+}|\nabla\varphi^\natural|^p\;\leq\;\int_{\RR_+}|\nabla\varphi|^p,
\end{equation}
with equality if and only if $\varphi=\varphi^\natural$ almost everywhere.
\end{Coro}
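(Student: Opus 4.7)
My plan is to derive this corollary directly from Theorem \ref{decrease_W_1_p}; the inequality requires essentially no work, and the substance is the equality case. From \eqref{decrease_W_1_p_2} one immediately obtains $\int_{\RR_+}|\nabla\varphi^\natural|^p \leq \int_{\RR_+\setminus\cH_\infty(\varphi)}|\nabla\varphi|^p \leq \int_{\RR_+}|\nabla\varphi|^p$, so the Pólya–Szegő inequality follows at once.

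For the equality case, the implication ``$\varphi=\varphi^\natural$ a.e. $\Rightarrow$ equality'' is obvious. For the converse, assume equality holds. Combining this with \eqref{decrease_W_1_p_1} forces $\int_{\cH_\infty(\varphi)}|\nabla\varphi|^p = 0$, hence $\nabla\varphi = 0$ almost everywhere on $\cH_\infty(\varphi)$. My plan is first to upgrade this to $\meas(\cH_\infty(\varphi))=0$, and then to recover $\varphi=\varphi^\natural$ a.e.\ from the definitions.

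The first step I would do by contradiction: if $\meas(\cH_\infty(\varphi))>0$, there exists $\nu>0$ for which $\cH(\{\varphi\geq\nu\})$ has positive measure, and this set contains some maximal open interval $(c_1,c_2)$. On $(c_1,c_2)$ one has $\varphi<\nu$, whereas the continuous representative of $\varphi$ (available since $\varphi\in W^{1,p}(\RR_+)$ in dimension one) must satisfy $\varphi(c_1),\varphi(c_2)\geq\nu$, because $c_1$ and $c_2$ are essential accumulation points of $\{\varphi\geq\nu\}$ by maximality of the interval. But absolute continuity of $\varphi$ combined with $\nabla\varphi=0$ almost everywhere on $(c_1,c_2)$ forces $\varphi$ to be constant on $[c_1,c_2]$, which is incompatible with $\varphi<\nu$ inside and $\varphi\geq\nu$ at the endpoints.

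Once $\meas(\cH_\infty(\varphi))=0$ is established, formula \eqref{define_y_nu} yields $y_\nu(\varphi)=x_\nu(\varphi)$ for every $\nu>0$, so by Definition \ref{definition_tamping_2} the superlevel set $\{\varphi^\natural\geq\nu\}$ equals the interval $[x_\nu(\varphi),\,x_\nu(\varphi)+\meas\{\varphi\geq\nu\}]$. This interval is precisely the essential convex hull of $\{\varphi\geq\nu\}$, and it differs from $\{\varphi\geq\nu\}$ itself only by a subset of $\cH(\{\varphi\geq\nu\})\subseteq\cH_\infty(\varphi)$, which is a null set. The superlevel sets of $\varphi$ and $\varphi^\natural$ therefore coincide up to measure zero for almost every $\nu$, and the layer-cake representation \eqref{layercake} concludes that $\varphi=\varphi^\natural$ almost everywhere. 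I expect the delicate point to be the contradiction argument above, specifically justifying that the endpoints $c_1,c_2$ of a maximal hollow component carry values $\geq\nu$ for the continuous representative; the rest is bookkeeping with the definitions of $x_\nu$, $y_\nu$ and the essential convex hull.
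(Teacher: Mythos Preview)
Your proposal is correct and follows essentially the same approach as the paper: deduce the inequality directly from Theorem~\ref{decrease_W_1_p}, and for the equality case use that $\nabla\varphi$ must vanish on $\cH_\infty(\varphi)$, which is impossible on a nontrivial hollow because a $W^{1,p}$ function in one dimension has a (H\"older) continuous representative. The paper's own argument is a one-line sketch invoking exactly these two facts (``$\varphi=\varphi^\natural$ is equivalent to $\cH_\infty(\varphi)=\emptyset$'' and ``$|\nabla\varphi|$ cannot vanish identically inside the hollows''); your version simply spells out the contradiction on a maximal hollow interval and the bookkeeping from $\meas(\cH_\infty(\varphi))=0$ to $\varphi=\varphi^\natural$ a.e., which the paper leaves implicit.
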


This corollary is obtained directly from Theorem \ref{decrease_W_1_p}. The estimate \eqref{decrease_W_1_p_2} also gives the equality case above using the fact that $\varphi=\varphi^\natural$ is equivalent to $\cH_\infty(\varphi)=\emptyset$ and the fact that $|\nabla\varphi|$ cannot worth identically $0$ inside the hollows (in dimension $1$ a $W^{1,p}$ function is Hölder continuous).

\subsection{About a Riesz inequality for the tamping}\label{section4}
Now that we have a Pólya–Szegő inequality for the tamping, one natural thing to expect is that we get a Riesz rearrangement inequality for the tamping similar to Theorem \ref{Riesz} because we can expect that the tamping also decreases the $W^{s,p}$ half-norms. Indeed, as we evoked in Section \ref{section1}, this inequality gives that a rearrangement does not increase the $H^s$ norms. We explain why this inequality is actually false in the case of the rearrangement by tamping and we discuss in this section some aspects about the tamping and $H^s$ norms.

\subsubsection{Riesz rearrangement inequality: A counter-example}

Theorem \ref{Riesz} is false in the case of the tamping and we provide hereafter a counter-example. Such a result is not really surprising because we already know that the tamping does not verify the Hardy-Littlewood inequality (the cases presented at Figures \ref{contre_exemple_1} and \ref{contre_exemple_2} work as counter-examples since this inequality implies the continuity in $L^2$). As we explained at Subsection \ref{rearrangement inequalities}, the Riesz rearrangement inequality can be interpreted as a generalization of the Hardy-Littlewood inequality.

Here is a counter-example for the Riesz rearrangement inequality in the case of the tamping. Let $0<a<b<c<d<e$ and let $t\geq0$. If we make the supposition that $d\leq t\leq e-b$, then we have
\begin{align}
\int_{\RR_+}&\int_{\RR_+}\mathbbm{1}_{[0,e]}(x)\;.\;\Big(\mathbbm{1}_{[a,b]}+\mathbbm{1}_{[c,d]}\Big)(y)\;.\;\mathbbm{1}_{[-t,t]}(x-y)\;\mathrm{d}x\;\mathrm{d}y\\
&>\;\int_{\RR_+}\int_{\RR_+}\mathbbm{1}_{[0,e]}(x)\;.\;\mathbbm{1}_{[a,\,b-c+d]}(y)\;.\;\mathbbm{1}_{[-t,t]}(x-y)\;\mathrm{d}x\;\mathrm{d}y.
\end{align}
This is a counter-example for the Riesz rearrangement inequality for the tamping because we have 
\begin{equation}\label{counter_example}\Big(\mathbbm{1}_{[a,b]}+\mathbbm{1}_{[c,d]}\Big)^\natural=\mathbbm{1}_{[a,\,b-c+d]}.\end{equation}
The computation of the above inequality is provided in Section \ref{section_of_proof}.

\subsubsection{Decreasing the $H^s$ half-norm: A counter-example}
Working again on this counter-example, we can prove that the tamping sometimes fails to decrease the $H^s$ half-norms defined by
\begin{equation}
|\varphi|_{H^s}^2\;:=\;\int_\RR\int_\RR\frac{|\varphi(x)-\varphi(y)|^2}{|x-y|^{1+2s}}\,\mathrm{d}x\,\mathrm{d}y.
\end{equation}
If we define $\psi:=\mathbbm{1}_{[a,b]}+\mathbbm{1}_{[c,d]}+\mathbbm{1}_{[0,e]},$ then a direct computation gives that, for $s\in]0,\frac{1}{2}[$,
\begin{align*}
\|\psi\|^2_{H^s}\;&=\;\frac{1}{s\,\Big(\frac{1}{2}-s\Big)}\bigg[b^{1-2s}-a^{1-2s}+d^{1-2s}-c^{1-2s}+e^{1-2s}+(b-a)^{1-2s}\\&-(c-a)^{1-2s}+(c-b)^{1-2s}+(d-a)^{1-2s}
-(d-b)^{1-2s}+(d-c)^{1-2s}\\&+(e-a)^{1-2s}-(e-b)^{1-2s}+(e-c)^{1-2s}-(e-d)^{1-2s}\bigg],
\end{align*}
whereas
\begin{align*}
\|\psi^\natural\|^2_{H^s}\;=\;\frac{1}{s\,\Big(\frac{1}{2}-s\Big)}&\bigg[(b+d-c)^{1-2s}-a^{1-2s}+e^{1-2s}+(b+d-c-a)^{1-2s}\\&+(e-a)^{1-2s}-(e+c-d-b)^{1-2s}\bigg].
\end{align*}
Using a computer, if we take $s=\frac{1}{4}$ and $a=1$, $b=2$, $c=17$, $d=32$, $e=52$, we obtain that
$$\|\psi\|^2_{H^s}\;\approx\; 124.07\qquad\mathrm{and}\qquad\|\psi^\natural\|^2_{H^s}\;\approx\; 124.48,$$
which means that the rearrangement by tamping does not decrease the $H^\frac{1}{4}$ half norm in this case. We also find counter-examples for $s=0.2,$ $0.3,$  $0.35,$ $0.4,$ and $0.45$. 
This invites us to think that the tamping fails to decrease the $H^s$ half-norms for all $s<\frac{1}{2}$ (but we have no systematic counter-example yet). Nevertheless, whether the tamping decreases the $H^s$ half-norms or not when $s=\frac{1}{2}$ (or higher) remains unclear. The links between the problems of decreasing the $H^\frac{1}{2}$ half-norms in dimension $1$ and decreasing the $H^1$ half-norms in dimension $2$ are well known \cite{CaffarelliSilvestre} and then such a result on the tamping for the $H^\frac{1}{2}$ half-norms would be an important step on the question whether it is possible to extend this work to dimension $2$ or not.

\section{Proofs of the main results}\label{section_of_proof}
\subsection{Proof of Property \ref{prop:dirichlet}}
Let $\varphi\in\cC^0(\RR_+,\RR_+)$ be absolutely continuous. First, since $\forall\;\nu\geq 0,~x_\nu(\varphi^\natural)\leq x_\nu(\varphi)$, we deduce that
\begin{equation*}
\{\nu\in\RR_+/x_\nu(\varphi^\natural)=0\}\subseteq\{\nu\in\RR_+/x_\nu(\varphi)=0\}.
\end{equation*}
Thus, $\varphi^\natural(0)\geq\varphi(0).$ Therefore, by definition of the tamping (Definition \ref{definition_tamping_2}), we can conclude the proof if we obtain that
\begin{equation}
\label{viconte de valmont}
\forall\;\nu>\varphi(0),\qquad\meas\Big(\cH_\nu(\varphi)\cap[0,x_\nu(\varphi)]\Big)<x_\nu(\varphi).
\end{equation}
Indeed, in this case we have $x_\nu(\varphi^\natural)=x_\nu(\varphi)-\meas\Big(\cH_\nu(\varphi)\cap[0,x_\nu(\varphi)]\Big)>0$ and then $\varphi^\natural(0)\leq\varphi(0).$ Let $\nu>\varphi(0)$. We first recall the Luzin property which states that the image of a set of Lebesgue measure zero by an absolutely continuous function is of Lebesgue measure zero. Here $\varphi$ is absolutely continuous by hypothesis and 
\begin{equation*}
\varphi\bigg(\bigcup_{\mu\in[\varphi(0),\nu]}x_\mu(\varphi)\bigg)=[\varphi(0),\nu].
\end{equation*}
We infer that $\bigcup_{\mu\in[\varphi(0),\nu]}x_\mu(\varphi)$ has a positive measure. On the other hand, by definition of $\cH_\lambda(\varphi)$ (Definition \ref{define_L^p_hollows}) we have,
\begin{equation*}
\forall\;\mu<\nu,\quad x_\mu(\varphi)\notin\cH_\nu(\varphi).
\end{equation*}
Finally, we observe that $\mu\mapsto x_\mu(\varphi)$ is nondecreasing. All these facts together give
\begin{equation}
\meas\Big(\cH_\nu(\varphi)\cap[0,x_\nu(\varphi)]\Big)\leq\meas([0,x_\nu(\varphi)])-\meas\bigg(\bigcup_{\mu\in[\varphi(0),\nu]}x_\mu(\varphi)\bigg)<x_\nu(\varphi).
\end{equation}
and hence \eqref{viconte de valmont} is proved.\qed

\subsection{Proof of Property \ref{up_property}}
Let $\varphi^\dag(x):=\supess(\varphi\mathbbm{1}_{[0,x]})$. By definition we have
\begin{equation}
\varphi^\dag\in\;\left\{\psi\in\mfM_+(\RR_+)\;:\;\begin{array}{l}
\varphi\leq\psi\;\;\text{almost everywhere,}\\
\psi\;\;\text{is non-decreasing.}
\end{array}
 \right\}.\label{set64}
\end{equation}

We want to explain why $\varphi^\dag$ is a minimizer of this set (for the usual comparison of functions) and is the unique one. Considering the uniqueness, suppose that $\psi$ and $\chi$ verify the constraints. Then the function $x\mapsto\min\{\psi(x),\chi(x)\}$ also verifies the constraints and is strictly lower than at least one of the two unless $\psi=\chi$. For the existence, we have to explain why all the functions inside the considered set are higher than $\varphi^\dag$. Suppose that we have a function $\psi$ in the set \eqref{set64} such that on some bounded set $A$ of measure non zero \begin{equation}\label{supess_strict}\supess(\psi.\mathbbm{1}_{A})<\supess(\varphi.\mathbbm{1}_{A}).\end{equation} We note $a:=\supess A$.
\begin{itemize}
\item Case 1: Suppose that for every $\varepsilon>0$ holds
\begin{equation}\label{uparrow_case1}
\supess\Big(\varphi.\mathbbm{1}_{[0,a]}\Big)\;=\;\supess\Big(\varphi.\mathbbm{1}_{[a-\varepsilon,a]}\Big).\end{equation}
In this case, combining \eqref{supess_strict} and \eqref{uparrow_case1}, we obtain that there exists an $\varepsilon>0$ small enough so that
\begin{equation}\label{plouf}
\psi(x)\;<\;\varphi(x)\qquad\mathrm{for\;almost\;every\;}x\mathrm{\;in\;}A\cap[a-\varepsilon,a].
\end{equation}
Since $a=sup\;ess\; A$, the set $A\cap[a-\varepsilon,a]$ is not of measure $0$ and then equation \eqref{plouf} is in contradiction with the fact that $\psi\geq\varphi$.
\item Case 2: In the other case, there exists an $a_0<a$ such that,
\begin{equation}\label{ouaf}
\forall\;\varepsilon\in\left]0,\,\frac{a-a_0}{2}\right[,\quad\supess\Big(\varphi.\mathbbm{1}_{[0,a]}\Big)\;=\;\supess\Big(\varphi.\mathbbm{1}_{[a_0-\varepsilon,a_0+\varepsilon]}\Big).\end{equation}
We use the fact that $A\subseteq[0,a]$ and \eqref{ouaf} becomes
\begin{equation}
\supess\left(\varphi.\mathbbm{1}_{A}\right)\;\leq\;\supess\Big(\varphi.\mathbbm{1}_{[a_0-\varepsilon,a_0+\varepsilon]}\Big)
\end{equation}
We now inject \eqref{supess_strict} in the equation above and we use $A\cap[a-\varepsilon,a]\subseteq A.$ We obtain
\begin{equation} \label{we get}
\supess\Big(\psi.\mathbbm{1}_{A\cap[a_0+\varepsilon,\,a]}\Big)\;<\;\supess\Big(\varphi.\mathbbm{1}_{[a_0-\varepsilon,a_0+\varepsilon]}\Big).
\end{equation}
Finally, the fact that $\psi\geq\varphi$ with \eqref{we get} gives
\begin{equation}
\supess\Big(\psi.\mathbbm{1}_{A\cap[a_0+\varepsilon,\,a]}\Big)\;<\;\supess\Big(\psi.\mathbbm{1}_{[a_0-\varepsilon,a_0+\varepsilon]}\Big).
\end{equation}
Since $meas(A\cap[a_0+\varepsilon,\,a])>0$, the above inequality gives a contradiction with the fact that $\psi$ is non-decreasing.
\end{itemize}
We conclude that $\varphi^\dag$ is the minimal function. \qed

\subsection{Proof of Lemma \ref{double}}
We recall the formula \eqref{big_formula} that we now prove,
\begin{equation}\varphi^\natural:=\left\{\begin{array}{ll}
\Big(\varphi.\mathbbm{1}_{\{\varphi=\varphi^\dag\}}\Big)^{\!\!\ast}\big(\sigma(\varphi)-x\big) &\;\mathrm{if}\quad x\leq \sigma(\varphi)\\\\
\Big(\varphi.\mathbbm{1}_{\{\varphi\neq\varphi^\dag\}}\Big)^{\!\!\ast}\big(x-\sigma(\varphi)\big)&\quad\mathrm{otherwise.}
\end{array} 
\right.\end{equation}
Call $\widehat{\varphi}$ the function in the right-hand side of this equality.

~\newline\emph{\mathversion{bold}\textbf{Claim 1: Up to a set of measure $0$, the following inclusion holds.}}\mathversion{bold}
\begin{equation}
\cH_\infty(\varphi)\subseteq\{\varphi\neq\;\varphi^\dag\}.
\end{equation}\mathversion{normal}
By definition of the hollows of the function $\varphi$ (Definition \ref{define_L^p_hollows}), it is enough to prove that we have for all $\nu>0$ the inclusion 
$$\cH\Big(\!\{\varphi\geq\nu\}\Big)\;\subseteq\;\{\varphi\neq\;\varphi^\dag\}.$$
Let $X\subseteq\cH\big(\{\varphi\geq\nu\}\big)$ be a compact set. Since $X\subseteq\RR$ the quantities
$$\infess X\qquad\mathrm{and}\qquad\supess  X$$
are well-defined.
Since $X$ is bounded and included in the hollows of the set $\{\varphi\geq\nu\}$, by definition of the hollows of a set (Definition \ref{define_hollows}) there exist two sets $A$ and $B$ of positive measure and included in $\{\varphi\geq\nu\}$ such that
$$\supess A\;\leq\;\infess X\;\leq\;\supess  X\;\leq\;\infess B.$$
The definition of the hollows also gives
\begin{equation}\label{inclusion_holes_level_sets}
\cH\Big(\{\varphi\geq\nu\}\Big)\;\subseteq\;\{\varphi\geq\nu\}^c\;=\;\{\varphi<\nu\}.\end{equation}
Since $\varphi\geq\nu$ on $A$, the fact that $\varphi^\dag$ is non-decreasing combined with \eqref{inclusion_holes_level_sets} implies that $\varphi\neq\varphi^\dag$ on $X$. Since $X$ is any compact subset of $\cH\big(\{\varphi\geq\nu\}\big)$, Claim 1 is proved.

~\newline\emph{\textbf{Claim 2: Up to a set of measure zero, the following inclusion holds.}}
\mathversion{bold}$$\{\varphi\neq\;\varphi^\dag\}\cap[0,s(\varphi)]\subseteq\cH_\infty(\varphi).$$\mathversion{normal}
It is false in general that $\{\varphi\neq\;\varphi^\dag\}\subseteq\cH_\infty(\varphi)$ but this inclusion becomes true if we only consider what happens on $[0,s(\varphi)]$. Recall that $s(\varphi)$, defined by \eqref{def_a}, must be understood as being the smallest element in \emph{argmax} $\varphi$. We are going to prove that for any given $\delta>0$ and $\varepsilon>0$,
\begin{equation}\label{delta_inclusion}
D_{\delta,\varepsilon}:=\{\varphi\;\leq\;\varphi^\dag-\delta\}\cap[0,s(\varphi)-\varepsilon]\;\subseteq\;\cH_\infty(\varphi).
\end{equation}
Let $x$ in $D_{\delta,\varepsilon}$ and
\begin{equation}
E_{\delta,\varepsilon}:=[s(\varphi)-\varepsilon,+\infty[\;\cap\;\Big\{\varphi\;\geq\;\supess\;\varphi-\frac{\delta}{2}\Big\}.
\end{equation}
By definition of $s(\varphi)$, the measure of $E_{\delta,\varepsilon}$ is positive. If $b\in E_{\delta,\varepsilon}$ then $b\geq x$ and
\begin{align}
\varphi(x)&\leq\;\varphi^\dag(x)-\delta\;=\;\supess\varphi.\mathbbm{1}_{[0,x]}-\delta\;\leq\;\supess\varphi-\delta\\&\leq\;\varphi(b)-\frac{\delta}{2}.\label{major_varphi_right}
\end{align}
Moreover, there exists a set $A_{\delta}\subseteq[0,x]$ of measure non zero such that for all $a\in A_{\delta},$
\begin{equation}\label{A_delta}
\varphi(a)\;\geq\;\supess\Big(\varphi.\mathbbm{1}_{[0,x]}\Big)\;-\;\frac{\delta}{2}\;=\;\varphi^\dag(x)\;-\;\frac{\delta}{2}.
\end{equation}
Equation \eqref{A_delta} and the fact that $\varphi(x)\leq\;\varphi^\dag(x)-\delta$ give
\begin{equation}\label{major_varphi_left}
\varphi(x)\;\leq\;\varphi(a)-\frac{\delta}{2}.
\end{equation}
The upper bounds \eqref{major_varphi_right} and \eqref{major_varphi_left} with the definition of the hollows (Definition \ref{define_hollows}) give the inclusion \eqref{delta_inclusion}. Since $\delta>0$ and $\varepsilon>0$ are arbitrary, the claim is proved.

~\newline\emph{\textbf{\mathversion{bold}Claim 3: The essential infima of the super-level sets of $\varphi^\natural$ and $\widehat{\varphi}$ coincide. In other words, \begin{equation}  x_\nu(\varphi^\natural)=x_\nu(\widehat{\varphi}).
\end{equation}}}
First, $\widehat{\varphi}$ is non-decreasing on $[0,\sigma(\varphi)]$ and non-increasing on $[\sigma(\varphi),+\infty[$ and therefore for almost every $\nu>0$
\begin{equation}\label{x_nu_hat}
x_\nu(\widehat{\varphi})\;=\;x_\nu\bigg(x\in[0,\sigma(\varphi)]\longmapsto\Big(\varphi.\mathbbm{1}_{\{\varphi=\varphi^\dag\}}\Big)^{\!\!\ast}\big(\sigma(\varphi)-x\big)\bigg),
\end{equation}
where $x_\nu(\varphi):=\infess\{\varphi\geq\nu\}$. We now remark that the hypothesis $s(\varphi)<+\infty$ implies that all the quantities that we manipulate here after are finite and then the substractions are well-defined. Thus we can rewrite \eqref{x_nu_hat} as
\begin{equation}\label{x_nu_hat_hat}
x_\nu(\widehat{\varphi})\;=\;\sigma(\varphi)-\;\supess\bigg\{x\in\RR_+\;:\;\Big(\varphi.\mathbbm{1}_{\{\varphi=\varphi^\dag\}}\Big)^{\!\!\ast}\big(x\big)\;\geq\nu\;\bigg\}.
\end{equation}
Observing now that the super-level sets of a non-increasing function are intervals starting from the origin, \eqref{x_nu_hat_hat} becomes
\begin{equation}
x_\nu(\widehat{\varphi})\;=\;\sigma(\varphi)-\meas\Big\{\Big(\varphi.\mathbbm{1}_{\{\varphi=\varphi^\dag\}}\Big)^{\!\!\ast}\geq\nu\Big\}.
\end{equation}
We now use the fact that the Schwarz rearrangement preserves the measure of the super-level sets to obtain
\begin{equation}\label{56}
x_\nu(\widehat{\varphi})\;=\;\sigma(\varphi)-\meas\Big\{\varphi.\mathbbm{1}_{\{\varphi=\varphi^\dag\}}\geq\nu\Big\}.
\end{equation}
We observe that $\varphi<\nu$ on $[0, x_\nu(\varphi)[$ which implies that we have the set equality $$\big\{\varphi.\mathbbm{1}_{\{\varphi=\varphi^\dag\}}\geq\nu\big\} = \big[x_\nu(\varphi),s(\varphi)\big]\setminus\big\{\varphi\neq\varphi^\dag\big\}.$$ We replace this set equality in \eqref{56} and we obtain
\begin{align*}
x_\nu(\widehat{\varphi})&=\sigma(\varphi)-\meas\Big(\big[x_\nu(\varphi),s(\varphi)\big]\setminus\big\{\varphi\neq\varphi^\dag\big\}\Big)\\
&=\sigma(\varphi)-\bigg(s(\varphi)-x_\nu(\varphi)-\meas\Big(\{\varphi\neq\varphi^\dag\}\cap\big[x_\nu(\varphi),\;s(\varphi)\big]\Big)\bigg)
\end{align*}
Replacing $\sigma(\varphi)$ by its expression given at \eqref{def_alpha} leads to
\begin{align*}
x_\nu(\widehat{\varphi})&=x_\nu(\varphi)+\meas\bigg(\{\varphi\neq\varphi^\dag\}\cap\big[x_\nu(\varphi),\;s(\varphi)\big]\bigg)-\meas\bigg(\{\varphi\neq\varphi^\dag\}\cap\big[0,\;s(\varphi)\big]\bigg)\\
&=x_\nu(\varphi)-\meas\Big(\{\varphi\neq\varphi^\dag\}\cap\big[0,\;x_\nu(\varphi)\big]\Big).
\end{align*}
Since $x_\nu(\varphi)\leq s(\varphi)$, we can use the results of Claims 1 and 2 to get
\begin{equation}
x_\nu(\widehat{\varphi})=x_\nu(\varphi)-\meas\Big(\cH_\infty(\varphi)\cap\big[0,\;x_\nu(\varphi)\big]\Big)\;=\;x_\nu(\varphi^\natural).
\end{equation}
Claim 3 is proved.

~\newline\emph{\textbf{Claim 4: The measure of the super-level sets is the same \mathversion{bold}\begin{equation}\meas\{\varphi^\natural\geq\nu\}=\meas\{\widehat{\varphi}\geq\nu\}.\end{equation}}}

The tamping is a rearrangement and so $\meas\{\varphi^\natural\geq\nu\}=\meas\{\varphi\geq\nu\}$. Therefore it is enough to prove that $\meas\{\widehat{\varphi}\geq\nu\}=\meas\{\varphi\geq\nu\}$. The fact that the Schwarz rearrangement preserves the measure of the super-level sets gives
\begin{align*}
\meas\Big(\{\widehat{\varphi}\geq\nu\}\Big)&=\meas\Big(\{\widehat{\varphi}\geq\nu\}\cap[0,\sigma(\varphi)]\Big)+\meas\Big(\{\widehat{\varphi}\geq\nu\}\cap[\sigma(\varphi),+\infty[\Big)\\
&=\meas\bigg(\Big\{\Big(\varphi.\mathbbm{1}_{\{\varphi=\varphi^\dag\}}\Big)^{\!\ast}\geq\nu\Big\}\bigg)\;+\;\meas\bigg(\Big\{\Big(\varphi.\mathbbm{1}_{\{\varphi\neq\varphi^\dag\}}\Big)^{\!\ast}\geq\nu\Big\}\bigg)\\
&=\meas\bigg(\Big\{\varphi.\mathbbm{1}_{\{\varphi=\varphi^\dag\}}\geq\nu\Big\}\bigg)\;+\;\meas\bigg(\Big\{\varphi.\mathbbm{1}_{\{\varphi\neq\varphi^\dag\}}\Big)\geq\nu\Big\}\bigg)\\
&=\meas\Big(\{\varphi\geq\nu\}\Big).
\end{align*}
Claim 4 is proved.

~\newline\emph{\textbf{Conclusion of the proof}}

Let $\nu>0$. By definition of the tamping (Definition \ref{definition_tamping_2}) and by definition of $\widehat{\varphi}$, the two sets $\{\varphi^\natural\geq\nu\}$ and $\{\widehat{\varphi}\geq\nu\}$ are segments. By Claim 4 these two segments have the same length and by Claim 3 the inferior extremity of these two segments coincide. We infer that
$$\{\widehat{\varphi}\geq\nu\}\;=\;\{\varphi^\natural\geq\nu\}.$$
Therefore $\widehat{\varphi}=\varphi^\natural$ almost everywhere.\qed

\subsection{Proof of Lemma \ref{crucial_inequality}}
Let $\Omega$ be a domain of $\RR^d$, let $p\in[1,+\infty[$ and let $\varphi,\psi\in L^p_+(\Omega)$.
The idea behind the inequality stated in Lemma \ref{crucial_inequality} is the observation that the set ${hypo}(\varphi)\triangle{hypo}(\psi)$ is a subset of $\Omega\times\RR_+$ and thus it is possible to apply the Schwarz rearrangement to this set (for the last variable). 

Since $p\geq1$, the function $\nu\longmapsto\nu^{p-1}$ is non-decreasing, for every $x\in\Omega$. Therefore, by the Schwarz rearrangement inequality (Property \ref{Schwarz_propr}),
\begin{equation}\label{Schwarz_Delta}
\int_0^\infty\nu^{p-1}\mathbbm{1}_{\big[\big(\mathrm{hypo}(\varphi)\triangle \mathrm{hypo}(\psi)\big)^\ast\;\big]}(x,\nu)\;\mathrm{d}\nu
\;\leq\;\int_0^\infty\nu^{p-1}\mathbbm{1}_{\big[\mathrm{hypo}(\varphi)\triangle \mathrm{hypo}(\psi)\big]}(x,\nu)\;\mathrm{d}\nu.
\end{equation}
The inequality above is an equality if and only if $p=1$ or if the support of the two functions $\psi$ or $\phi$ are disjoint. On the other hand,
by the layer-cake representation \eqref{layercake2} and the Fubini theorem,
\begin{align}
&\|\varphi-\psi\|_{L^p}^p=p\int_0^\infty\nu^{p-1}\meas\{|\varphi-\psi|\geq\nu\}\;\mathrm{d}\nu\\
&=p\int_\Omega\int_0^\infty\nu^{p-1}\mathbbm{1}_{\big[\mathrm{hypo}(|\varphi-\psi|)\big]}(x,\nu)\;\mathrm{d}\nu\;\mathrm{d}x,\label{crucial_ine_1}
\end{align}
where here  $meas$ refers to the $d$-dimensional Lebesgue measure (with $\Omega\subseteq\RR^d$).
Moreover,
\begin{equation}\mathrm{hypo}(|\varphi-\psi|)\;=\;\bigcup_{x\in\Omega}\;\{x\}\times\big[0,\,|\varphi(x)-\psi(x)|\big]\;=\;\big(\mathrm{hypo}(\varphi)\triangle \mathrm{hypo}(\psi)\big)^\ast.\label{hypopo}\end{equation}
Combining \eqref{crucial_ine_1} and \eqref{hypopo} we get
\begin{equation}
\|\varphi-\psi\|_{L^p}^p=p\int_\Omega\int_0^\infty\nu^{p-1}\mathbbm{1}_{\big[\big(\mathrm{hypo}(\varphi)\triangle \mathrm{hypo}(\psi)\big)^\ast\;\big]}(x,\nu)\;\mathrm{d}\nu\;\mathrm{d}x.\label{crucial_ine_2}\end{equation}
With \eqref{Schwarz_Delta} the inequality above becomes
\begin{equation}
\|\varphi-\psi\|_{L^p}^p\leq p\int_\Omega\int_0^\infty\nu^{p-1}\mathbbm{1}_{\big[\mathrm{hypo}(\varphi)\triangle \mathrm{hypo}(\psi)\big]}(x,\nu)\;\mathrm{d}\nu\;\mathrm{d}x
.\label{crucial_ine_3}\end{equation}
We now make a small abuse of notation by identifying, for every $\nu$, the set $\Omega\times\{\nu\}$ with $\Omega$ (and so we do with their subsets and for the computation of lebesgue measure) which makes the manipulations of integrals and measure theory much easier. With another use of the Fubini theorem, this gives,
\begin{align*}
&p\int_\Omega\int_0^\infty\nu^{p-1}\mathbbm{1}_{\big[\mathrm{hypo}(\varphi)\triangle \mathrm{hypo}(\psi)\big]}(x,\nu)\;\mathrm{d}\nu\;\mathrm{d}x\\
&=p\int_{0}^\infty\int_\Omega\nu^{p-1}\mathbbm{1}_{\big[\big(\mathrm{hypo}(\varphi)\;\triangle\;\mathrm{hypo}(\psi)\big)\cap\big(\Omega\times\{\nu\}\big)\big]}(x)\;\mathrm{d}x\;\mathrm{d}\nu\\
&=p\int_0^\infty\nu^{p-1}\meas_{\RR^d}\Big[\big(\mathrm{hypo}(\varphi)\;\triangle\;\mathrm{hypo}(\psi)\big)\cap\big(\Omega\times\{\nu\}\big)\Big]\;\mathrm{d}\nu\\
&=p\int_0^\infty\nu^{p-1}\meas_{\RR^d}\Big[\big(\mathrm{hypo}(\varphi)\cap\Omega\!\times\!\{\nu\}\big)\;\triangle\;\big(\mathrm{hypo}(\psi)\cap\Omega\!\times\!\{\nu\}\big)\Big]\;\mathrm{d}\nu\\
&=p\int_0^\infty\nu^{p-1}\meas_{\RR^d}\Big(\{\varphi\geq\nu\}\triangle\{\psi\geq\nu\}\Big)\;\mathrm{d}\nu.
\end{align*}
This with \eqref{crucial_ine_3} gives the announced inequality.\qed

\subsection{Proof of Lemma \ref{Cv_equiv}}
Let $\Omega$ be a domain of $\RR^d$, let $p\in[1,+\infty[$ and let $\varphi,\psi\in L^p_+(\Omega)$.

~\newline\emph{\textbf{\mathversion{bold}Claim: If $\varphi$ and $\psi$ are bounded and compactly supported, then
\begin{equation}\label{81}\begin{split}
&\qquad\qquad p\int_0^\infty\nu^{p-1}\meas\Big(\{\varphi\geq\nu\}\triangle\{\psi\geq\nu\}\Big)\,\mathrm{d}\nu\;\leq\;2^{p-1}\|\varphi-\psi\|_{L^p}^p\\&+\;p\,2^{p-1}\bigg(\max\Big(\|\varphi\|_{L^\infty},\,\|\psi\|_{L^\infty}\Big)\,\meas\Big(\supp|\varphi-\psi|\Big)^\frac{1}{p}\bigg)^{p-1}\|\varphi-\psi\|_{L^p}.
\end{split}\end{equation}}}
First we observe that
$$\forall\nu\in\Big[\min(\varphi(x),\psi(x)),\;\max(\varphi(x),\psi(x))\Big],\quad\nu\leq\max(\|\varphi\|_{L^\infty},\,\|\psi\|_{L^\infty}).$$
Therefore for all $x\in\Omega$,
\begin{equation}
\int_{\min(\varphi(x),\psi(x))}^{\max(\varphi(x),\psi(x))}\nu^{p-1}\,\mathrm{d}\,\nu\;\leq\;\int_{\max(\|\varphi\|_{L^\infty},\,\|\psi\|_{L^\infty})}^{\max(\|\varphi\|_{L^\infty},\,\|\psi\|_{L^\infty})+\Big(\max(\varphi(x),\psi(x))-\min(\varphi(x),\psi(x))\Big)}\nu^{p-1}\,\mathrm{d}\,\nu.
\end{equation}
We can rewrite this as follows,
\begin{equation}\label{Schwarz_Delta_rev}\begin{split}
&\qquad\qquad\int_0^\infty\nu^{p-1}\mathbbm{1}_{\big[\mathrm{hypo}(\varphi)\triangle \mathrm{hypo}(\psi)\big]}(x,\nu)\;\mathrm{d}\nu.\\
&\leq\;\int_0^\infty\Big(\nu+\max(\|\varphi\|_{L^\infty},\,\|\psi\|_{L^\infty})\Big)^{p-1}\mathbbm{1}_{\big[\big(\mathrm{hypo}(\varphi)\triangle \mathrm{hypo}(\psi)\big)^\ast\;\big]}(x,\nu)\;\mathrm{d}\nu,
\end{split}\end{equation}
where $\ast$ refers to the Schwarz non-increasing rearrangement for the last variable. The above estimate can be understood as being the converse of \eqref{Schwarz_Delta}. We now integrate for the $x$ variable and the Fubini theorem to get
\begin{align}
&p\int_0^\infty\nu^{p-1}\meas\Big(\{\varphi\geq\nu\}\triangle\{\psi\geq\nu\}\Big)\,\mathrm{d}\nu\\
\leq p\int_0^\infty\Big(\nu+\max(\|\varphi\|_{L^\infty}\!,&\|\psi\|_{L^\infty}\!)\!\Big)^{p-1}\!\!\!.\meas\Big(\big(\mathrm{hypo}(\varphi)\triangle\mathrm{hypo}(\psi)\big)^\ast\!\cap(\RR\times\{\nu\})\Big)\mathrm{d}\nu.
\end{align}
We then use in the estimate above the fact that $(a+b)^\alpha\leq2^{\alpha}(a^\alpha+b^\alpha)$ (with $\alpha>0$), the layer-cake representation of the $L^p$ norms and Equality \eqref{hypopo}. This gives,
\begin{equation}\begin{split}\label{grr}
&p\int_0^\infty\nu^{p-1}\meas\Big(\{\varphi\geq\nu\}\triangle\{\psi\geq\nu\}\Big)\,\mathrm{d}\nu\\
\leq\;2^{p-1}\,\|\varphi&-\psi\|_{L^p}^p+p\,2^{p-1}\Big(\max(\|\varphi\|_{L^\infty}\!,\|\psi\|_{L^\infty}\!)\Big)^{p-1}\,\|\varphi-\psi\|_{L^1}.\end{split}
\end{equation}
In the other hand, the Hölder inequality gives 
\begin{equation}\label{holderr}
\|\varphi-\psi\|_{L^1}\;\leq\;\Big(\meas(\supp|\varphi-\psi|)\Big)^\frac{p-1}{p}\,\|\varphi-\psi\|_{L^p}.
\end{equation}
We then obtain \eqref{81} by combining \eqref{grr} and \eqref{holderr}.

~\newline\emph{\textbf{Passing to the limit and conclusion of the proof}}

Consider now $\varphi\in L^p_+(\Omega)$ and a sequence $\varphi_n\in L^p_+(\Omega)$ converging towards $\varphi$ in $L^p$. We note $\cB(x,a)$ the open ball of center $x$ and of radius $a$.
If we apply the inequality given by the claim before to the functions $\varphi.\mathbbm{1}_{\cB(0,b)}.\mathbbm{1}_{\{\varphi\leq a\}}$ and $\varphi_n.\mathbbm{1}_{\cB(0,b)}.\mathbbm{1}_{\{\varphi\leq a\}}$, we obtain a constant $C$ such that
\begin{align}
&\qquad\qquad p\int_0^a\nu^{p-1}\meas\Big((\{\varphi_n\geq\nu\}\triangle\{\varphi\geq\nu\})\cap\cB(0,b)\Big)\,\mathrm{d}\nu\label{89}\\
&\qquad \qquad\qquad\leq\;C\|\varphi_n-\varphi\|_{L^p}^p\;+C\big(a.b^\frac{1}{p}\big)^{p-1}\!.\|\varphi-\varphi_n\|_{L^p}.\label{90}
\end{align}
When $a$ and $b$ go to infinity, the term \eqref{89} converges towards 
\begin{equation}\label{79}p\int_0^{+\infty}\nu^{p-1}\meas(\{\varphi_n\geq\nu\}\triangle\{\varphi\geq\nu\})\,\mathrm{d}\nu\;<\;+\infty.\end{equation}Let $\varepsilon>0$. We fix $a$ and $b$ large enough such that the term \eqref{89} is at distance at most $\varepsilon$ to the term \eqref{79}. Now that $a$ and $b$ are fixed, we use the fact that \eqref{90} worthes less than an $\varepsilon>0$ when $n$ is chosen large enough. The lemma is proved.\qed

\subsection{Proof of Lemma \ref{before_crucial_lemma}}
Let $\Omega$ be a domain of $\RR^d$ and let $p\in(1,+\infty)$.
Let $\varphi\in L^p_+(\Omega)$ and let $\varphi_n\in L^p_+(\Omega)$ such that $M:=\sup_n\|\varphi_n\|_{L^p} <+\infty$ and
$$\meas\Big(\{\varphi_n\geq\nu\}\triangle\{\varphi\geq\nu\}\Big)\;\xrightarrow[n\rightarrow+\infty]{}\;0,$$
for almost every $\nu>0$.
Let $g\in L^q(\Omega)$ where $q$ is the Hölder conjugate of $p$ defined by $\frac{1}{p}+\frac{1}{q}=1$. We want to prove that
\begin{equation}\label{crucial_cv}\int_\Omega g(x)\,(\varphi_n-\varphi)(x)\;\mathrm{d}x\quad\xrightarrow[n\rightarrow\infty]{}\quad0.\end{equation}

~\newline\emph{\textbf{\mathversion{bold} Claim 1: The proof can be reduced to functions $\varphi$ and $\varphi_n$ with support in $\Omega\setminus\cB(0,a)$ with $a$ independent of $n$.}}

Let $a\in\RR_+$. Using the Hölder inequality, we get that
\begin{align*}
\int_{\Omega\setminus\cB(0,a)} &g(x)\,(\varphi_n-\varphi)(x)\;\mathrm{d}x\;\\
&\leq\left(\int_{\Omega\setminus\cB(0,a)}|g(x)|^q\;\mathrm{d}x\right)^\frac{1}{q}\,\left(\int_{\Omega\setminus\cB(0,a)}\big|(\varphi_n-\varphi)(x)\big|^p\;\mathrm{d}x\right)^\frac{1}{p}\\
&\leq\left(\int_{\Omega\setminus\cB(0,a)}|g(x)|^q\;\mathrm{d}x\right)^\frac{1}{q}\,\Bigg(\;\|\varphi_n\|_{L^p}+\|\varphi\|_{L^p}\Bigg)\\
&\leq\left(\int_{\Omega\setminus\cB(0,a)}|g(x)|^q\;\mathrm{d}x\right)^\frac{1}{q}\,\Bigg(\;M+\|\varphi\|_{L^p}\Bigg).
\end{align*}
The above estimate does not depend on $n$ and it is smaller than a fixed $\varepsilon>0$ if $a$ is chosen large enough because we have $q<\infty$. Then, in order to prove \eqref{crucial_cv}, it is enough to consider only uniformly compactly supported functions. Otherwise, it is always possible to separate the integral appearing in \eqref{crucial_cv} into two - an integral on $\Omega\cap\cB(0,a)$ and an integral on $\Omega\setminus\cB(0,a)$ - and to prove that they are both smaller than any $\varepsilon>0$ when the parameters $a$ and $n$ are well chosen. 

~\newline\emph{\textbf{\mathversion{bold} Claim 2: The proof can be reduced to the case $\varphi\in L^\infty(\Omega)$.}}

Let $b\in\RR_+$. The reasoning is almost the same as the step before.
\begin{align*}
\int_{\{\varphi\geq b\}} &g(x)\,(\varphi_n-\varphi)(x)\;\mathrm{d}x\;\\
&\leq\left(\int_{\{\varphi\geq b\}}|g(x)|^q\;\mathrm{d}x\right)^\frac{1}{q}\,\left(\int_{\{\varphi\geq b\}}|(\varphi_n-\varphi)(x)|^p\;\mathrm{d}x\right)^\frac{1}{p}\\
&\leq\left(\int_{\{\varphi\geq b\}}|g(x)|^q\;\mathrm{d}x\right)^\frac{1}{q}\,\Bigg(\;M+\|\varphi\|_{L^p}\Bigg).
\end{align*}
The above estimate does not depend on $n$ and it is smaller than a fixed $\varepsilon>0$ if $b$ is chosen large enough. Then, in order to prove \eqref{crucial_cv}, we can suppose that the function $\varphi$ is essentially bounded. 

~\newline\emph{\textbf{\mathversion{bold} Claim 3: The proof can be reduced to the case $g\in L^\infty$.}}

The same reasoning as for Claim 2 with $g$ instead of $\varphi$ leads to the same estimate (mutatis mutandis).

~\newline\emph{\textbf{\mathversion{bold} Claim 4: The proof can be reduced to the case $\;
\sup\limits_{n\in\NN}\;\|\varphi_n\|_{L^\infty}\;<\;+\infty.
$}}

This reasoning is also a variation of the two reasonings before. We have
\begin{equation}\label{102}
\int_{\{\varphi_n\geq b\}} g(x).(\varphi_n-\varphi)(x)\;\mathrm{d}x\;\leq\left(\int_{\{\varphi_n\geq b\}}|g(x)|^q\;\mathrm{d}x\right)^\frac{1}{q}\,\Bigg(\;M+\|\varphi\|_{L^p}\Bigg)\end{equation}
For any sets $A$ and $B$ we always have $A\subseteq B\cup(A\triangle B)$. We use this fact in equation \eqref{102} and we obtain
\begin{align*}
&\qquad\qquad\int_{\{\varphi_n\geq b\}} g(x).(\varphi_n-\varphi)(x)\;\mathrm{d}x\;\\
&\leq\Bigg(\;M+\|\varphi\|_{L^p}\Bigg)
\left[\int_{\{\varphi\geq b\}}|g(x)|^q\;\mathrm{d}x+\int_{\{\varphi\geq b\}\triangle\{\varphi_n\geq b\}}|g(x)|^q\;\mathrm{d}x\right]^\frac{1}{q}\\
&\leq\Bigg(\;M+\|\varphi\|_{L^p}\Bigg)\Bigg[\left(\int_{\{\varphi\geq b\}}|g(x)|^q\;\mathrm{d}x\right)^\frac{1}{q}+\|g\|_{L^\infty}.\bigg(\meas\Big(\{\varphi\geq b\}\triangle\{\varphi_n\geq b\}\Big)\bigg)^{\frac{1}{q}}\Bigg]\\
&=\;\Bigg(\;M+\|\varphi\|_{L^p}\Bigg)\;\|g\|_{L^\infty}\,\bigg(\meas\Big(\{\varphi\geq b\}\triangle\{\varphi_n\geq b\}\Big)\bigg)^{\frac{1}{q}},
\end{align*}
where we used Claim 2 that states $\{\varphi\geq b\}$ is of measure $0$ if $b$ is chosen large enough. Now we use the hypothesis
$$\meas\bigg(\{\varphi\geq b\}\triangle\{\varphi_n\geq b\}\bigg)\;\xrightarrow[n\rightarrow\infty]{}\;0,$$
and we conclude that the functions $\varphi_n$ can be taken essentially bounded independently of $n$.

~\newline\emph{\textbf{Conclusion of the proof.}}

It is now supposed that the functions we manipulate are all essentially bounded by the same bound $b\in\RR_+$ and have a compact support contained in a ball $\cB(0,a)$ with $a\in\RR_+$ independent of $n$. If we combine the Hölder inequality and the inequality given by Lemma \ref{crucial_inequality} we get
\begin{align*}
&\int_\RR g(x)\,(\varphi_n-\varphi)(x)\;\mathrm{d}x\;\\\leq\;\left(\int_\RR|g(x)|^q\mathrm{d}x\right)^\frac{1}{q}&\,\left(p\int_0^\infty\nu^{p-1}\meas\Bigg(\{\varphi_n\geq\nu\}\triangle\{\varphi\geq\nu\}\Bigg)\mathrm{d}\nu\right)^\frac{1}{p}.
\end{align*}
We conclude the proof with the Lebesgue dominated convergence theorem.\qed

\subsection{Proof of Lemma \ref{crucial_lemma} for $p=1$}
Let $\Omega$ be a domain of $\RR^d$. Let $\varphi\in L^1_+(\Omega)$ and let $\varphi_n\in L^1_+(\Omega)$ such that
\begin{equation}\|\varphi_n\|_{L^1}\longrightarrow\|\varphi\|_{L^1}\end{equation} and
\begin{equation} \meas\Big(\{\varphi_n\geq\nu\}\triangle\{\varphi\geq\nu\}\Big)\;\xrightarrow[n\rightarrow+\infty]{}\;0,\end{equation}
for almost every $\nu$. We want to prove that 
\begin{equation}
\|\varphi_n-\varphi\|_{L^1}\;\xrightarrow[n\rightarrow\infty]\;0.\end{equation}
Let $a$ and $b$ in $\RR_+$. The inequality provided at Lemma \ref{crucial_inequality} gives
\begin{align*}
&\int_{\cB(0,a)}\Big|\varphi.\mathbbm{1}_{\{\varphi\leq b\}}-\varphi_n.\mathbbm{1}_{\{\varphi_n\leq b\}}\Big|(x)\;\mathrm{d}x\\
\leq\int_0^b&\meas\Big((\{\varphi\geq\nu\}\triangle\{\varphi_n\geq\nu\})\;\cap\;\cB(0,a)\Big)\;\mathrm{d}\nu.\end{align*}
Since the measure of the symmetrical difference of the super-level sets is vanishing when $n$ grows, the Lebesgue dominated convergence theorem with the above estimate gives
\begin{equation}\label{local_crucial_cv}\int_{\cB(0,a)}\Big|\varphi.\mathbbm{1}_{\{\varphi\leq b\}}-\varphi_n.\mathbbm{1}_{\{\varphi_n\leq b\}}\Big|(x)\;\mathrm{d}x\;\xrightarrow[n\rightarrow\infty]{}\;0.\end{equation}
We now define
$$R_{a,b}(\varphi)\;:=\;\varphi-\varphi.\mathbbm{1}_{\{\varphi\leq b\}}.\mathbbm{1}_{\cB(0,a)}.$$
Since we work with $L^1$ norms, we have
$$\|R_{a,b}(\varphi_n)\|_{L^1}\;=\;\|\varphi_n\|_{L^1}\;-\;\big\|\varphi_n.\mathbbm{1}_{\{\varphi_n\leq b\}}.\mathbbm{1}_{\cB(0,a)}\big\|_{L^1}.$$
We use the hypothesis $\|\varphi_n\|_{L^1}\longrightarrow\|\varphi\|_{L^1}$ and the convergence result \eqref{local_crucial_cv} to obtain
\begin{equation}\label{local_crucial_rest}\|R_{a,b}(\varphi_n)\|_{L^1}\;\xrightarrow[n\rightarrow\infty]{}\;\|\varphi\|_{L^1}\;-\;\big\|\varphi.\mathbbm{1}_{\{\varphi\leq b\}}.\mathbbm{1}_{\cB(0,a)}\big\|_{L^1}\;=\;\|R_{a,b}(\varphi)\|_{L^1}.\end{equation}
We now write
\begin{align*}\|\varphi-\varphi_n\|_{L^1}\;\leq\;\big\|\varphi_n.&\mathbbm{1}_{\{\varphi_n\leq b\}}.\mathbbm{1}_{\cB(0,a)}-\varphi.\mathbbm{1}_{\{\varphi\leq b\}}.\mathbbm{1}_{\cB(0,a)}\big\|_{L^1}\\
&\;+\;\|R_{a,b}(\varphi_n)\|_{L^1}\;+\;\|R_{a,b}(\varphi)\|_{L^1}.
\end{align*}
The term $\|R_{a,b}(\varphi)\|_{L^1}$ can be made smaller than a given $\varepsilon$ by choosing $a$ and $b$ large enough. By \eqref{local_crucial_rest}, the term $\|R_{a,b}(\varphi_n)\|_{L^1}$ can be made $\varepsilon$ close to $\|R_{a,b}(\varphi)\|_{L^1}$ (and thus $2\varepsilon$ close to $0$) by choosing $n$ large enough. The first term of the above inequality is also smaller than a given $\varepsilon$ when $n$ is chosen large enough by the convergence result \eqref{local_crucial_cv}. Therefore,
$$\|\varphi-\varphi_n\|_{L^1}\;\xrightarrow[n\rightarrow\infty]{}\;0.$$\qed

\subsection{Proof of Lemma \ref{compactness_result}}
Let $p\in[1,+\infty)$. Let $(\varphi_n)\in L^p_+(\RR_+)$ be a bounded sequence such that
\begin{equation}
\exists\;\mu>0,\qquad\limsup\limits_{n\to+\infty}\;x_\mu(\varphi_n)\;<\;+\infty,
\end{equation}
where $x_\mu(\varphi_n)$ is the essential infimum of the super-level set at level $\mu$ of function $\varphi_n$. We want to prove that up to an omitted extraction there exists a function $\psi$ such that for almost every $\nu>0$
\begin{equation}
\meas\Big(\{\varphi_n^\natural\geq\nu\}\;\triangle\;\{\psi\geq\nu\}\Big)\;\xrightarrow[n\rightarrow+\infty]{}\;0.\label{115}
\end{equation}

~\newline\emph{\textbf{\mathversion{bold} Claim 1: For every $\nu>0$ the sequence $x_\nu(\varphi_n^\natural)$ is a bounded sequence}}\newline
To prove this claim we use the hypothesis that
\begin{equation}
\exists\;\mu>0,\qquad\limsup\limits_{n\to+\infty}\;x_\mu(\varphi_n)\;<\;+\infty.
\end{equation}
Since $\nu\leq\mu\Rightarrow x_\nu(\varphi)\leq x_\mu(\varphi)$, the case $\nu\leq\mu$ is straight-forward. Concerning the case $\nu\geq\mu$, we first use the fact that, by inclusion of the super-level sets,
\begin{equation}\label{122}
x_\nu(\varphi_n^\natural)\;\leq\;\supess\{\varphi_n^\natural\geq\mu\}.
\end{equation}
Since the super-level sets of $\varphi_n^\natural$ are segments and since $x_\mu(\varphi_n^\natural)$ remains bounded, the quantity appearing at the right-hand side of \eqref{122} remains bounded as $n\to\infty$ if and only if the measure of $\{\varphi_n^\natural\geq\mu\}$ remains bounded as $n\to\infty$. Nevertheless, by hypothesis $(\varphi_n)$ is bounded in $L^p$. Since the tamping is a rearrangement, we have for all $\lambda>0$
\begin{equation}\label{123}
\lambda^p\,\meas\{\varphi_n^\natural\geq\lambda\}\;\leq\;\|\varphi_n^\natural\|_{L^p}
=\|\varphi_n\|_{L^p}\;\leq\;\limsup\limits_{n\to+\infty}\|\varphi_n\|_{L^p}\;<\;+\infty.
\end{equation}
This implies in particular that $meas\{\varphi_n^\natural\geq\lambda\}$ is a bounded sequence. Therefore, the case $\lambda=\mu$ in \eqref{123} combined with \eqref{122} give the claim.

~\newline\emph{\textbf{\mathversion{bold} Claim 2: There exist an extraction $\sigma$ and a set $A\subseteq\RR_+^\ast$ countable and dense such that that there exists a decreasing family of segments $(T_\nu)_{\nu\in A}$ verifying the convergence property
\begin{equation}
\forall \nu\in A,\qquad\meas\Big(\Big\{\varphi_{\sigma(n)}^\natural\;\geq\;\nu\Big\}\;\triangle\;T_\nu\Big)\;\xrightarrow[n\to+\infty]{}0.
\end{equation}}}\newline
We first recall that $\{\varphi_{n}^\natural\;\geq\;\nu\}$ is also a segment.
By Claim 1, the respective lower extremity $x_\nu(\varphi_n^\natural)$ of the segments $\{\varphi_{n}^\natural\;\geq\;\nu\}$ remains bounded as $n\to+\infty$. Using again \eqref{123} we obtain that the length of these segments remains bounded as $n\to+\infty$ and therefore it is also the case for their upper extremity. By the Bolzano-Weierstrass theorem, we have for all $\nu>0$ an extraction $\sigma_\nu$ such that the two extremities of the segment $\{\varphi^\natural_n\geq\nu\}$ converge as $n\to+\infty$. The claim follows from a classical diagonal extraction argument.

~\newline\emph{\textbf{\mathversion{bold} Conclusion of the proof.}}\newline
To make the arguments easier to read, we now omit the extraction. The natural candidate for the limit is the function $\psi$ defined by
\begin{equation}
\psi(x)\;:=\;\sup\;\{\nu\in A\;/\;x\in T_\nu\}.
\end{equation}
This definition implies in particular that the super-level sets of $\psi$ are segments.
The result of Claim 2 gives \eqref{115} but for $\nu\in A$ which is only a countable set and then we cannot conclude yet. Observe first that the set of $\nu>0$ such that the measure of $\{\psi=\nu\}$ is $0$ is a countable set and since the announced result \eqref{115} holds for almost every $\nu>0$ we can exclude this case. Now, let $\nu\in\RR_+^\ast\setminus A$ such that $meas\{\psi=\nu\}=0$. Proving that \eqref{115} holds for this choice $\nu$ concludes the proof.
For that purpose we prove that the two extremities of the segment $\{\varphi_n^\natural\geq\nu\}$ are converging towards the corresponding extremities of the segment $\{\psi\geq\nu\}$.

We first claim that if $\nu\notin A$ is such that there exists $(\nu_k)\in A$ converging towards $\nu$ while $x_{\nu_k}(\psi)$ is not converging towards $x_\nu(\psi)$ then the measure of $\{\psi=\nu\}$ is not $0$ (which is the excluded case). Indeed, suppose for instance that $\lambda$ converges towards $\nu$ by upper values. By inclusion of the super-level sets, the sequence $x_{\lambda}(\psi)$ is non-increasing and then converges. Since the super-level sets of $\psi$ are segments, 
\begin{equation}
\forall\;x\in\left[x_\nu(\psi),\lim\limits_{\lambda\to\nu^+}x_{\lambda}(\psi)\right],\qquad\psi(x)=\nu.
\end{equation}
The same reasonning works if $\lambda$ converges towards $\nu$ by lower values. 
Since the set of $\nu$ such that $meas\{\psi=\nu\}\neq0$ is countable and since we want to establish that \eqref{115} holds for almost every $\nu>0$, we only have to concentrate on the case
\begin{equation}\label{areuh}
x_\lambda(\psi)\;\xrightarrow[\lambda\to\nu]{}\;x_\nu(\psi),\qquad \mathrm{with\;}\lambda\in A.
\end{equation}
We consider $\mu\leq\nu\leq\lambda$ with $\mu$ and $\lambda$ in $A$. By inclusion of the super level-sets, 
\begin{equation}
\liminf\limits_{n\to+\infty}x_\mu(\varphi_n^\natural)\;\leq\;\liminf\limits_{n\to+\infty}x_\nu(\varphi_n^\natural)\;\leq\;\limsup\limits_{n\to+\infty}x_\nu(\varphi_n^\natural)\;\leq\;\limsup\limits_{n\to+\infty}x_\lambda(\varphi_n^\natural).
\end{equation}
Using now Claim 2. we are led to
\begin{equation}
x_\mu(\psi)\;\leq\;\liminf\limits_{n\to+\infty}x_\nu(\varphi_n^\natural)\;\leq\;\limsup\limits_{n\to+\infty}x_\nu(\varphi_n^\natural)\;\leq\;x_\lambda(\psi).
\end{equation}
Making $\mu\to\nu^-$ and $\lambda\to\nu^+$ and using \eqref{areuh} we get
\begin{equation}
\lim\limits_{n\to+\infty}x_\nu(\varphi^\natural_n)\;=\;x_\nu(\psi).
\end{equation}
The same reasonning works for the upper extremity of the segment $\{\varphi_{n}^\natural\;\geq\;\nu\}$ and thus \eqref{115} holds. The theorem is proved.\qed

\subsection{Proof of Theorem \ref{continuity_theorem_1D}}
We recall that the definitions of $x_\nu$ and $y_\nu$ were given at Proposition \ref{L^p_tamping}.
Since the super-level sets of $\varphi$ that are of measure zero create no problem for the convergence of $\varphi_n^\natural$, we will make the assumption that, without loss of generality, 
$$\forall \nu,\quad\meas\{\varphi\geq\nu\}>0.$$
This is equivalent to
\begin{equation}\label{x_nu_fini}
\forall \nu,\quad x_\nu(\varphi)<\infty\qquad\text{and}\qquad\limsup\limits_{n\rightarrow+\infty}x_\nu(\varphi_n)<\infty.
\end{equation}
The strategy of the proof consists in proving that the condition $\varphi_n\rightarrow\varphi$ in $L^p$, the condition \eqref{46} on the convergence of the super-level sets and the \emph{local hollows convergence condition},
\begin{equation}\label{hollows_convergence}
\forall a>0,\qquad\meas\Big((\cH_\infty(\varphi_n)\triangle\cH_\infty(\varphi))\cap[0,a]\Big)\;\xrightarrow[n\rightarrow+\infty]{}\;0,
\end{equation}
together imply that for almost every $\nu\geq0$
\begin{equation}
\meas\Big(\{\varphi^\natural_n\geq\nu\}\triangle\{\varphi^\natural\geq\nu\}\Big)\;\xrightarrow[n\rightarrow+\infty]{}\;0.
\end{equation}
In this case the hypothesis of Lemma \ref{crucial_lemma} are satisfied and we can conclude the proof. For that purpose, we prove that for almost every $\nu$
$$y_\nu(\varphi_n)\;\xrightarrow[n\rightarrow+\infty]{}\;y_\nu(\varphi).$$
We start by defining the segment
$$X_\nu\left(\varphi_n,\varphi\right)\;:=\;\left[\liminf\limits_{n\rightarrow+\infty} x_\nu(\varphi_n),\;x_\nu(\varphi)\right].$$
The fact that the extreminites of this segement are well-ordered, meaning that $\liminf\limits_{n\rightarrow+\infty} x_\nu(\varphi_n)\!~\leq~\! x_\nu(\varphi)$, is a consequence of Claim 1 below.

~\newline\emph{\textbf{\mathversion{bold} Claim 1: $\limsup\limits_{n\rightarrow+\infty} x_\nu(\varphi_n)\in X_\nu\left(\varphi_n,\varphi\right)$.}}\newline
By the absurd, suppose that $\limsup\limits_{n\rightarrow+\infty} x_\nu(\varphi_n)>x_\nu(\varphi)$.
The definition of $x_\nu(\varphi_n)$ gives
\begin{equation}\label{117}
\{\varphi_n\leq\nu\}\cap[0,x_\nu(\varphi_n)]\;=\;[0,x_\nu(\varphi_n)].
\end{equation}
By \eqref{117} we obtain
\begin{equation}
\int_{[x_\nu(\varphi),x_\nu(\varphi_n)]\cap\{\varphi\geq\nu\}}|\varphi-\nu|^p\;\leq\;\int_{[x_\nu(\varphi),x_\nu(\varphi_n)]\cap\{\varphi\geq\nu\}}|\varphi-\varphi_n|^p\;\leq\;\int_{\RR_+}|\varphi-\varphi_n|^p.
\end{equation}
and therefore
\begin{equation}\label{118}
\limsup\limits_{n\to\infty}\int_{[x_\nu(\varphi),x_\nu(\varphi_n)]\cap\{\varphi\geq\nu\}}|\varphi-\nu|^p\;\leq\;\lim\limits_{n\to\infty}\int_{\RR_+}|\varphi-\varphi_n|^p=0.
\end{equation}
By definition of $x_\nu(\varphi)$ we have that for all $\varepsilon>0$,
\begin{equation}\label{116}
\meas\Big(\{\varphi>\nu\}\cap[x_\nu(\varphi),x_\nu(\varphi)+\varepsilon]\Big)\;>\;0.
\end{equation}
The fact that  $\limsup\limits_{n\rightarrow+\infty} x_\nu(\varphi_n)>x_\nu(\varphi)$ with \eqref{116} gives
\begin{equation}\label{120}
\limsup\limits_{n\to\infty}\int_{[x_\nu(\varphi),x_\nu(\varphi_n)]}|\varphi-\nu|^p\;>\;0.
\end{equation}
Equations \eqref{118} and \eqref{120} are in contradiction.

~\newline\emph{\textbf{\mathversion{bold} Claim 2: We have $\varphi\leq\nu$ almost everywhere on $X_\nu(\varphi_n,\varphi)$.}}

\noindent If we have $\varphi>\nu$ on a subset $A$ of $X_\nu(\varphi_n,\varphi)$ whose measure is non zero then given the definition of $x_\nu$ and the fact that $\varphi_n$ converges in $L^p$ towards $\varphi$, holds
$$x_\nu(\varphi)\;\leq\;\mathrm{inf\;ess}\; A\qquad\mathrm{and}\qquad\limsup\limits_{n\rightarrow+\infty} x_\nu(\varphi_n)\;\leq\;\mathrm{inf\;ess}\; A.$$
This is in contradiction with the fact that, by definition of $X_\nu\left(\varphi_n,\varphi\right)$,
$$\infess A\;<\;\supess A\;\leq\;\max\; X_\nu\left(\varphi_n,\varphi\right)$$
because the maximum of $X_\nu\left(\varphi_n,\varphi\right)$ is $x_\nu(\varphi)$.

~\newline\emph{\textbf{\mathversion{bold} Claim 3: We have $\varphi\geq\nu$ almost everywhere on $X_\nu(\varphi_n,\varphi)\setminus\cH_\infty(\varphi)$.}}

\noindent Suppose that we have $\varphi\leq\nu-\varepsilon$ on a subset $A$ of $X_\nu(\varphi_n,\varphi)$ with $A$ of positive measure (for a certain $\varepsilon>0$).
Since by Claim 1 $\displaystyle\limsup\limits_{n\rightarrow+\infty} x_\nu(\varphi_n)\leq x_\nu(\varphi),$ then
\begin{equation}\liminf\limits_{n\rightarrow+\infty} x_\nu(\varphi_n)\;\leq\;\mathrm{inf\;ess}\; A.\end{equation}
We assumed at \eqref{x_nu_fini} that $x_\nu(\varphi)<\infty$, then we know by $L^p$ convergence that there exists a set $B$ whose essential infimum is greater than $x_\nu(\varphi)$ and such that, for $n$ large enough, ${\varphi_n}_{|B}\geq\nu-\varepsilon/2$. Such a set verifies
\begin{equation}\label{X_nu_hollows_1}\mathrm{sup\;ess\;}A\leq\mathrm{inf\;ess\;}B.\end{equation}

Moreover, by definition of $x_\nu(\varphi_n)$, there exist sets $C_n$ of positive measure and an extraction $\sigma$ such that 
\begin{equation}\label{X_nu_hollows_2}\mathrm{inf\;ess}\; A\geq\liminf\limits_{n\rightarrow+\infty}\left(\;\mathrm{inf\;ess\;}C_n\;\right)\end{equation}
and ${\varphi_{\sigma(n)}}\geq\nu$ on $C_n$. The extraction $\sigma$ is an extraction such that $x_\nu(\varphi_{\sigma(n)})\rightarrow\liminf\limits_{n\rightarrow+\infty} x_\nu(\varphi_n)$. Such an extraction exists by property of the ``liminf''. Both \eqref{X_nu_hollows_1} and \eqref{X_nu_hollows_2} imply that
\begin{equation}\label{inclusion_A}A\subseteq\left[\liminf\limits_{n\rightarrow+\infty}\left(\;\mathrm{inf\;ess\;}C_n\;\right),\;\mathrm{inf\;ess\;}B\right]\setminus\left\{\varphi_n\geq\nu-\frac{\varepsilon}{2}\right\}.\end{equation}
This inclusion is true for almost every element of $A$. Nevertheless, by definition of the \emph{hollows} at \eqref{define_hollows}, we have for all measurable set $M$
$$\mathrm{conv\;ess\;}M\subseteq\conv M\qquad\text{and}\qquad\mathrm{conv\;ess\;}M\setminus M\subseteq\cH(M),$$
Therefore, regarding the definitions of the sets $B$ and $C_n$,
\begin{equation}\label{inclusion_B_n}
\left[\mathrm{inf\;ess\;}C_n\;,\;\mathrm{inf\;ess\;}B\right]\setminus\left\{\varphi_n\geq\nu-\frac{\varepsilon}{2}\right\}\subseteq\cH_{\nu-\varepsilon/2}(\varphi_n)\subseteq\cH_\infty(\varphi_n).
\end{equation}
This inclusion is true for almost every element. Since we demanded that the sequence $\varphi_n$ verifies the convergence of the hollows \eqref{hollows_convergence}, we conclude that the inclusion \eqref{inclusion_B_n} is preserved asymptotically, which means that
\begin{equation}\label{inclusion_B}
\left[\liminf\limits_{n\rightarrow+\infty}\left(\;\mathrm{inf\;ess\;}C_n\;\right)\;,\;\mathrm{inf\;ess\;}B\right]\setminus\left\{\varphi\geq\nu-\frac{\varepsilon}{2}\right\}\subseteq\cH_\infty(\varphi).
\end{equation}
We conclude from \eqref{inclusion_A} and \eqref{inclusion_B} that $A\subseteq\cH_\infty(\varphi)$ for almost every element of $A$. This gives the conclusion.

~\newline\emph{\textbf{\mathversion{bold} Claim 4: If we have $\meas\{\varphi=\nu\}=0$ then,
$\;X_\nu(\varphi,\varphi_n)\subseteq \cH_\infty(\varphi).$}}

The inclusion is true up to a set of measure zero.

By Claim 2 we have, $\varphi_{|X_\nu(\varphi,\varphi_n)}\leq\nu$ almost everywhere on $X_\nu(\varphi,\varphi_n).$ But since $\meas\{\varphi=\nu\}=0$, we can conclude that $\varphi_{|X_\nu(\varphi,\varphi_n)}<\nu$ almost everywhere. Claim 3 gives the announced inclusion.

~\newline\emph{\textbf{\mathversion{bold} Claim 5: If we have $\meas\{\varphi=\nu\}=0$ then $y_\nu(\varphi_n)\xrightarrow[n\rightarrow+\infty]{}y_\nu(\varphi).$}}

We have :
\begin{equation}\begin{split}
y_\nu&(\varphi_n)=x_\nu(\varphi_n)-\meas\bigg(\cH_\infty(\varphi_n)\cap[0,x_\nu(\varphi_n)]\bigg)\\
&=y_\nu(\varphi)+x_\nu(\varphi_n)-x_\nu(\varphi)\\
&+\meas\bigg(\cH_\infty(\varphi)\cap[0,x_\nu(\varphi)]\bigg)-\meas\bigg(\cH_\infty(\varphi)\cap[0,x_\nu(\varphi_n)]\bigg)\\
&+\meas\bigg(\cH_\infty(\varphi)\cap[0,x_\nu(\varphi_n)]\bigg)-\meas\bigg(\cH_\infty(\varphi_n)\cap[0,x_\nu(\varphi_n)]\bigg).\end{split}
\end{equation}
Thus,
\begin{align}&\left|y_\nu(\varphi_n)-y_\nu(\varphi)\right|\;\leq\;\bigg|\bigg(x_\nu(\varphi_n)-x_\nu(\varphi)\bigg)\label{103}\\
&\qquad-\bigg(\meas\bigg(\cH_\infty(\varphi)\cap[0,x_\nu(\varphi_n)]\bigg)-\meas\bigg(\cH_\infty(\varphi)\cap[0,x_\nu(\varphi)]\bigg)\bigg)\bigg|\label{104}\\
&+\bigg|\meas\bigg(\cH_\infty(\varphi)\cap[0,x_\nu(\varphi_n)]\bigg)-\meas\bigg(\cH_\infty(\varphi_n)\cap[0,x_\nu(\varphi_n)]\bigg)\bigg|\label{105}
\end{align}
We will explain why the two terms of the sum in the right hand side of the above inequality are vanishing when $n$ goes to $\infty$.

Concerning \eqref{105}, we always have
\begin{align}
\bigg|\meas&\bigg(\cH_\infty(\varphi)\cap[0,x_\nu(\varphi_n)]\bigg)-\meas\bigg(\cH_\infty(\varphi_n)\cap[0,x_\nu(\varphi_n)]\bigg)\bigg|\\
&\leq\meas\bigg[\bigg(\cH_\infty(\varphi)\cap[0,x_\nu(\varphi_n)]\bigg)\triangle\bigg(\cH_\infty(\varphi_n)\cap[0,x_\nu(\varphi_n)]\bigg)\bigg]\\
&=\meas\bigg[\bigg(\cH_\infty(\varphi)\triangle\cH_\infty(\varphi_n)\bigg)\cap[0,x_\nu(\varphi_n)]\bigg]\\
&\leq\meas\bigg(\cH_\infty(\varphi)\triangle\cH_\infty(\varphi_n)\bigg),
\end{align}
which is converging towards zero when $n$ goes to infinity by hypothesis \eqref{hollows_convergence}.

Concerning \eqref{103}-\eqref{104}, as we explain here after, the result is a consequence of the properties established at the previous claims.
\begin{itemize}
\item\emph{Case 1: $x_\nu(\varphi_n)\in\left[\liminf\limits_{n\rightarrow+\infty} x_\nu(\varphi_n),\;\limsup\limits_{n\rightarrow+\infty} x_\nu(\varphi_n)\right]$.}

The result established at Claim 4 and the fact that $X_\nu(\varphi,\varphi_n)$ is a segment imply that
\begin{equation}\cH_\infty(\varphi)\cap\conv\left\{x_\nu(\varphi_n),x_\nu(\varphi)\right\}=\conv\left\{x_\nu(\varphi_n),x_\nu(\varphi)\right\}.\end{equation}
Thus,
\begin{align}\meas\bigg(\cH_\infty(\varphi)\cap[0,x_\nu(\varphi)]\bigg)-\meas\bigg(\cH_\infty(\varphi)\cap[0,x_\nu(\varphi_n)]\;\bigg)=x_\nu(\varphi)-x_\nu(\varphi_n),\label{with_step_4}\end{align}
and then the studied term is $0$.

\item\emph{Case 2: $x_\nu(\varphi_n)\notin\left[\liminf\limits_{n\rightarrow+\infty} x_\nu(\varphi_n),\;\limsup\limits_{n\rightarrow+\infty} x_\nu(\varphi_n)\right]$}

In this case, we cannot use directly the result of Claim 4 to get \eqref{with_step_4} but this equality remains true asymptotically if we apply this result not to $x_\nu(\varphi_n)$ but to its ``liminf'' and ``limsup''. More precisely, if we suppose for instance that $x_\nu(\varphi_n)<\liminf\limits_{n\rightarrow+\infty} x_\nu(\varphi_n)$, then we have
\begin{equation}\meas\Big(\cH_\infty(\varphi)\cap\left[0,x_\nu(\varphi_n)\right]\Big)\end{equation}
\begin{equation}\leq\meas\left(\cH_\infty(\varphi)\cap\left[0,\liminf\limits_{n\rightarrow+\infty} x_\nu(\varphi_n)\right]\right)+\meas\left(\left[x_\nu(\varphi_n),\liminf\limits_{n\rightarrow+\infty} x_\nu(\varphi_n)\right]\right)$$
$$=\meas\left(\cH_\infty(\varphi)\cap\left[0,\liminf\limits_{n\rightarrow+\infty} x_\nu(\varphi_n)\right]\right)+\left(\liminf\limits_{n\rightarrow+\infty} x_\nu(\varphi_n)-x_\nu(\varphi_n)\right).\end{equation}
Equality \eqref{with_step_4} becomes
\begin{equation}\begin{split}
&\bigg|\meas\bigg(\cH_\infty(\varphi)\cap[0,x_\nu(\varphi_n)]\bigg)-\meas\bigg(\cH_\infty(\varphi)\cap[0,x_\nu(\varphi)]\bigg)-\left(x_\nu(\varphi_n)-x_\nu(\varphi)\right)\;\bigg|
\\&\leq \left|\;\meas\left(\;\cH_\infty(\varphi)\cap\left[0,\liminf\limits_{n\rightarrow+\infty} x_\nu(\varphi_n)\right]\;\right)\right. \\
&\qquad\qquad\qquad-\meas\Big(\cH_\infty(\varphi)\cap[0,x_\nu(\varphi)]\Big)-\left.\left(\liminf\limits_{n\rightarrow+\infty} x_\nu(\varphi_n)-x_\nu(\varphi)\right)\;\right|\\
&+2\left(\liminf\limits_{n\rightarrow+\infty} x_\nu(\varphi_n)-x_\nu(\varphi_n)\right).
\end{split}\end{equation}
We apply the result of Claim 4 to the ``$\liminf$'' and we get that
\begin{equation}\begin{split}\meas&\left(\cH_\infty(\varphi)\cap\left[0,\liminf\limits_{n\rightarrow+\infty} x_\nu(\varphi_n)\right]\right)-\meas\Big(\cH_\infty(\varphi)\cap[0,x_\nu(\varphi)]\Big)=\liminf\limits_{n\rightarrow+\infty} x_\nu(\varphi_n)-x_\nu(\varphi).\end{split}\end{equation}
Therefore
\begin{equation}\label{125}\begin{split}
\bigg|\;\meas&\bigg(\cH_\infty(\varphi)\cap[0,x_\nu(\varphi_n)]\bigg)-\meas\bigg(\cH_\infty(\varphi)\cap[0,x_\nu(\varphi)]\bigg)-\bigg(x_\nu(\varphi_n)-x_\nu(\varphi)\bigg)\bigg|
\\&\leq 2.\left(\liminf\limits_{n\rightarrow+\infty} x_\nu(\varphi_n)-x_\nu(\varphi_n)\right).
\end{split}\end{equation}
If we do the same reasoning but we suppose on the contrary that $x_\nu(\varphi_n)>\limsup\limits_{n\rightarrow+\infty} x_\nu(\varphi_n)$, we get
\begin{equation}\label{128}\begin{split}
\bigg|\meas&\bigg(\cH_\infty(\varphi)\cap[0,x_\nu(\varphi_n)]\bigg)-\meas\bigg(\cH_\infty(\varphi)\cap[0,x_\nu(\varphi)]\bigg)-\bigg(x_\nu(\varphi_n)-x_\nu(\varphi)\bigg)\bigg|
\\&\leq 2.\left(x_\nu(\varphi_n)-\limsup\limits_{n\rightarrow+\infty} x_\nu(\varphi_n)\right).
\end{split}\end{equation}
We can thus conclude from \eqref{125} and \eqref{128} that we have this following estimate,
\begin{equation}\begin{split}
\bigg|\meas&\bigg(\cH_\infty(\varphi)\cap[0,x_\nu(\varphi_n)]\bigg)-\meas\bigg(\cH_\infty(\varphi)\cap[0,x_\nu(\varphi)]\bigg)-\bigg(x_\nu(\varphi_n)-x_\nu(\varphi)\bigg)\bigg|
\\&\leq 2\dist\bigg(x_\nu(\varphi_n),\;\left[\liminf\limits_{n\rightarrow+\infty} x_\nu(\varphi_n),\;\limsup\limits_{n\rightarrow+\infty} x_\nu(\varphi_n)\right]\bigg).
\end{split}\end{equation}
This last term is converging towards zero when $n$ goes to infinity.
\end{itemize}

\emph{\textbf{Conclusion of the proof}}

The set of $\nu$ such that the convergence $\meas\{\varphi_n\geq\nu\}\xrightarrow[n\rightarrow+\infty]{}\meas\{\varphi\geq\nu\}$ does not happen is of measure zero by hypothesis \eqref{46}. The set of $\nu$ such that $\meas\{\varphi=\nu\}\neq0$ is also of measure zero. Therefore we know that for almost every $\nu\geq 0$
\begin{equation}
\meas\Big(\{\varphi_n^\natural\geq\nu\}\triangle\{\varphi^\natural\geq\nu\}\Big)\;\xrightarrow[n\rightarrow+\infty]{}\;0.
\end{equation}
We can conclude, using the crucial lemma \ref{crucial_lemma}, that we have $$\varphi_n^\natural\;\xrightarrow[n\rightarrow+\infty]{}\;\varphi^\natural\qquad\mathrm{in\;}L^p.$$
\qed

\begin{Rema}
To understand the reason why we have to exclude the cases where $\meas\{\varphi=\nu\}\neq0$, one can meditate the following counter-example. Let $\alpha\in[0,1]$ and
$$\varphi_n(x):=\left(1-\frac{1}{n}\right)\mathbbm{1}_{[0,\alpha]}(x)\;+\;\left(1+\frac{1}{n}\right)\mathbbm{1}_{]\alpha,1]}(x).$$
We have $\varphi_n$ converging towards $\mathbbm{1}_{[0,1]}$ and verifying the hypothesis of the theorem. Nevertheless, if $\nu=1$, $y_\nu(\varphi_n)$ is converging towards $\alpha$ that can take any value between $0$ and $1$ while $y_\nu(\varphi)=0$. 

Another aspect to point out in the case $\meas\{\varphi=\nu\}=0$ is the fact that we do have $y_\nu(\varphi_n)$ converging towards $y_\nu(\varphi)$ (by our proof) even if we do not have $x_\nu(\varphi_n)$ converging towards $x_\nu(\varphi)$. An interesting illustration of this phenomenon is 
$$\left\{\begin{array}{l}\displaystyle
\varphi_n^1(x):=\left(1+\frac{1}{n}\right)\sin(\pi x)\mathbbm{1}_{[0,1]}(x)+2.\mathbbm{1}_{[1,2]}(x),\\\\\displaystyle \varphi_n^2(x):=\left(1-\frac{1}{n}\right)\sin(\pi x)\mathbbm{1}_{[0,1]}(x)+2.\mathbbm{1}_{[1,2]}(x).
\end{array}\right. $$
Both sequences verify the hypothesis of Theorem \ref{continuity_theorem_1D} and converge towards $$\varphi(x):=\sin(\pi x)\mathbbm{1}_{[0,1]}(x)+2.\mathbbm{1}_{[1,2]}(x).$$
Nevertheless, for $\nu=1$ we have $\meas\{\varphi=\nu\}=0$ but
$$x_\nu(\varphi_n^1)\xrightarrow[n\rightarrow+\infty]{}\frac{1}{2}\qquad\mathrm{and}\qquad x_\nu(\varphi_n^2)\xrightarrow[n\rightarrow+\infty]{}1.$$
\end{Rema}
\subsection{Proof of Theorem \ref{decrease_W_1_p}}
Let $\varphi\in W^{1,p}_+(\RR_+)$. We approximate this function by voxel functions $\varphi_n\in\widetilde{\cE_n}$ using the construction given by Corollary \ref{continuity_result_1D_voxels}. The theorem is first established on $\Lambda\varphi_n$, the piece-wise linear approximation of $\varphi_n$ defined at \eqref{define_piece_wise_linear} using the tamping algorithm. Then the general inequality is obtained by passing to the limit size of the voxels tending to $0$. To study how vary the ${W^{1,p}}$ half-norms during the tamping algorithm, it is enough at first to consider the case of voxels that are squares of size $1$ which corresponds to $\varphi_n\in\cE_n(\RR)$. The case with voxels of other sizes is obtained by a rescale. If $\varphi_n\in\cE_n(\RR)$,
\begin{equation}\label{approx_nabla}\big|\Lambda\varphi_n\big|_{\dot W^{1,p}}^p=\sum_{i=0}^{+\infty}{\big|\varphi_n\left(i+1\right)-\varphi_n\left(i\right)\big|^p}.\end{equation}

~\newline\emph{\textbf{\mathversion{bold} Step 1: The elementary tamping algorithm does not increase \eqref{approx_nabla}}}\newline
The equivalence of algorithms that we explained during the proof of Lemma \ref{defines_a_new_function} also tells us that the only quantities involved in $\left|\Lambda\varphi_n\right|_{\dot W^{1,p}}^p$ that are changing during one step of the algorithm (with $\xi$ as pivot) are the two following :
\begin{itemize}
\item $\Big|\varphi_n\left(\xi\right)-\varphi_n\left(\xi-1\right)\Big|^p\;+\;\Big|\varphi_n(\xi+1)-\varphi_n(\xi)\Big|^p,$
\item $\Big|\varphi_n(\eta+1)-\varphi_n(\eta)\Big|^p,$ 
\end{itemize}
and they are becoming respectively 
\begin{itemize}
\item $\Big|\varphi_n\left(\xi+1\right)-\varphi_n\left(\xi-1\right)\Big|^p,$
\item $\Big|\varphi_n(\eta+1)-\varphi_n(\xi)\Big|^p\;+\;\Big|\varphi_n(\eta)-\varphi_n(\xi)\Big|^p.$
\end{itemize}
The difference of the two $L^p$ norms of the derivative (before and after one step of the tamping algorithm) is simply the difference of these quantities because all the other differences appearing in \eqref{approx_nabla} remain unchanged. Using the fact that we have $\varphi_n(\eta+1)\leq\varphi_n(\xi)\leq\varphi_n(\eta)$, this difference can be bounded from below as follows.
\begin{equation}\label{estim_decrease_1}\begin{split}\Big|\varphi_n\left(\xi\right)-\varphi_n\left(\xi-1\right)\Big|^p\;+\;\Big|\varphi_n(\xi+1)-\varphi_n(\xi)\Big|^p+\Big|\varphi_n(\eta+1)-\varphi_n(\eta)\Big|^p\\
-\Big|\varphi_n\left(\xi+1\right)-\varphi_n\left(\xi-1\right)\Big|^p-\Big|\varphi_n(\eta+1)-\varphi_n(\xi)\Big|^p-\Big|\varphi_n(\eta)-\varphi_n(\xi)\Big|^p\\
\geq\Big|\varphi_n\left(\xi\right)-\varphi_n\left(\xi-1\right)\Big|^p\!+\;\Big|\varphi_n(\xi+1)-\varphi_n(\xi)\Big|^p-\Big|\varphi_n\left(\xi+1\right)-\varphi_n\left(\xi-1\right)\Big|^p\geq 0,
\end{split}\end{equation}
where the non-negativeness comes from $\varphi_n(\xi-1)\geq\varphi_n(\xi)$ and $\varphi_n(\xi+1)\geq\varphi_n(\xi)$. The term on the left of \eqref{estim_decrease_1} will be referred as the \emph{residual} of the elementary tamping algorithm because it gives account of the variation of the $L^p$ norm of the gradient during one step on the tamping algorithm.

~\newline\emph{\textbf{\mathversion{bold} Step 2: Iteration of the estimate \eqref{estim_decrease_1}.}}\newline
The objective of this step is to explain what becomes the residual in \eqref{estim_decrease_1} when we iterate the tamping algorithm in order to obtain the residual of the full-tamping algorithm.
We have $\varphi_n\in\cE_n(\RR)$ and then the sets $\underline{N}\cap\underline{M}(\varphi_n)$ and $\overline{N}\cap\overline{M}(\varphi_n)$ are well-defined, see definitions at \eqref{define_N}-\eqref{define_N_2}. In this section we will explain that the iteration of the estimate \eqref{estim_decrease_1} leads to the inequality
\begin{align}
\int_{\RR_+}|&\nabla\Lambda\varphi_n|^p-|\nabla\Lambda\varphi_n^\natural|^p\\&\;\geq\;\sum_{\xi\in\NN}\Big|\varphi_n(\xi+1)-\varphi_n(\xi)\Big|^p.\Big(1-\mathbbm{1}_{\cH_\infty^c}\!(\xi).\mathbbm{1}_{\cH_\infty^c}\!(\xi+1)\Big)\label{expected_term_small_o_of_n}\\&-\!\!\sum_{X\in\mathrm{Co}(\cH_\infty(\varphi_n))}\!\!\Big|\varphi_n\Big(\inf X-\frac{1}{2}\Big)-\varphi_n\Big(\sup X+\frac{1}{2}\Big)\Big|^p,\label{small_o_of_n}
\end{align}
where $\mathrm{Co}(A)$ designates the set of all the connected components of a given set $A$. The indication function $(1-\mathbbm{1}_{\cH_\infty^c}\!(\xi).\mathbbm{1}_{\cH_\infty^c}\!(\xi+1))$ is used to select only the terms such that $\xi$ or $\xi+1$ is inside a hollow. We must note that the term at \eqref{expected_term_small_o_of_n} is almost the integral of $|\nabla\Lambda\varphi_n|^p$ on $\cH_\infty(\varphi_n)$ and it will converge towards the integral of $|\nabla\Lambda\varphi|^p$ inside the hollows when $n$ goes to infinity. To prove this estimate, it must first be noted that for any $\xi\in\cH_\infty(\varphi_n)\cap\NN$ there exists a step of the tamping algorithm in which the voxel column that was initially at $\xi$ is the pivot of the elementary tamping algorithm. The two columns of voxels that will be just before and just after are the columns of voxels that initially were at the positions respectively
\begin{equation}\underline{\xi}\;:=\;\max\;\big\{x\in\NN,\; x < \xi,\; :\;\varphi_n(x)\geq\varphi_n(\xi)\big\}\end{equation}
and
\begin{equation}\overline{\xi}\;:=\;\min\;\big\{x\in\NN,\; x > \xi,\; :\;\varphi_n(x)>\varphi_n(\xi)\big\}.\end{equation}
These conditions are necessary because of the condition $\xi\in\underline{N}\cap\underline{M}$ that must be satisfied to make $\xi$ a pivot. The fact that this will actually be these two columns that will be neighboring the pivot comes from the fact that, regarding the definitions, these two columns cannot satisfy the condition being inside $\underline{N}\cap\underline{M}$ before $\xi$ is chosen as pivot (this would contradict either the maximality or the minimality). With this observation, we get that the iteration of \eqref{estim_decrease_1} leads to
\begin{align}
&\qquad\int_{\RR_+}|\nabla\Lambda\varphi_n|^p-|\nabla\Lambda\varphi_n^\natural|^p\\&\;\geq\;\sum_{\xi\in\cH_\infty(\varphi_n)}\Big|\varphi_n(\overline{\xi})-\varphi_n(\xi)\Big|^p+\Big|\varphi_n(\underline{\xi})-\varphi_n(\xi)\Big|^p-\Big|\varphi_n(\overline{\xi})-\varphi_n(\underline{\xi})\Big|^p.\label{residual}
\end{align}
In order to conclude, we need to explain what are the cancellations that occur in the sum just above and why these cancellations lead to \eqref{expected_term_small_o_of_n}-\eqref{small_o_of_n}. Suppose for instance that $\overline{\xi}\in\cH_\infty(\varphi_n)$ and $\varphi_n(\underline{\xi})\geq\varphi_n(\overline{\xi})$. Then by definition
$$\underline{(\,\overline{\xi}\,)}\;=\;\underline{\xi}.$$
Moreover, when it is the turn of the voxels column initially at $\overline{\xi}$ to be chosen as pivot for the tamping algorithm we get in the estimate of the residual \eqref{residual} a term which is $$+\Big|\varphi_n(\overline{\xi})-\varphi_n\Big(\underline{(\,\overline{\xi}\,)}\Big)\Big|^p$$
and this term exactly cancels the term $-|\varphi_n(\overline{\xi})-\varphi_n(\underline{\xi})|^p$ that we get at the step where $\xi$ is the pivot. This reasoning works the same if on the contrary we have $\underline{\xi}\in\cH_\infty(\varphi_n)$ and $\varphi_n(\underline{\xi})\leq\varphi_n(\overline{\xi})$. These cancellations occur whenever $\underline{\xi}$ or $\overline{\xi}$ are in the hollows $\cH_\infty(\varphi_n)$. Therefore, the remaining terms that contribute with a positive sign are the ones corresponding to columns that were neighboring at the very beginning - which gives the term \eqref{expected_term_small_o_of_n}. The remaining terms that contribute with a negative sign are the terms of kind $-|\varphi_n(\overline{\xi})-\varphi_n(\underline{\xi})|^p$ that cannot be canceled because neither $\overline{\xi}$ nor $\underline{\xi}$ are in $\cH_\infty(\varphi_n)$ - and this gives \eqref{small_o_of_n}.

~\newline\emph{\textbf{\mathversion{bold} Step 3: Estimate of the negative term \eqref{small_o_of_n} in the residual.}}\newline
The last step of the proof consists in passing to the limit $n\rightarrow\infty$ while the size of the cubes $\lambda_n\!\times\!\mu_n$ (width $\times$ height) is vanishing. Since by definition of $\widetilde{\cE_n}(\RR)$, the number of cubes is at most $n\!\times\! n$, then to obtain the convergence in $L^p$ we have to impose the following decay rates conditions,
\begin{equation}
n.\lambda_n\xrightarrow[n\rightarrow\infty]{}\;+\infty\qquad\mathrm{and}\qquad
n.\mu_n\xrightarrow[n\rightarrow\infty]{}\;+\infty.
\end{equation}
Observe that $\Lambda\varphi_n$ is a variation of the standard piece-wise linear continuous approximations in $W^{1,p}$. To obtain that the convergence is actually in $W^{1,p}$ we have to impose a control on the derivative of $\Lambda\varphi_n$. For that purpose we impose this final condition
\begin{equation}
\frac{\mu_n}{\lambda_n}\;\xrightarrow[n\rightarrow\infty]{}\;0.
\end{equation}
The main difficulty of the next step is to explain why the term \eqref{small_o_of_n} is vanishing and for that purpose we need to work again on this term. 
If we rescale the estimate of the residual \eqref{expected_term_small_o_of_n}-\eqref{small_o_of_n} to have cubes of size $\lambda_n\!\times\!\mu_n$ then this term becomes
\begin{align}
\int_{\RR_+}|&\nabla\Lambda\varphi_n|^p-|\nabla\Lambda\varphi_n^\natural|^p\label{expected_term_small_o_of_n1}\\&\;\geq\;\sum_{\xi\in\NN}\Big|\varphi_n\Big(\lambda_n.(\xi+1)\Big)-\varphi_n\Big(\lambda_n.\xi\Big)\Big|^p.\Big(1-\mathbbm{1}_{\cH_\infty^c}\!(\lambda_n.\xi).\mathbbm{1}_{\cH_\infty^c}\!(\lambda_n.(\xi+1))\Big)\label{expected_term_small_o_of_n2}\\&-\!\!\sum_{X\in\mathrm{Co}(\cH_\infty(\varphi_n))}\!\!\Big|\varphi_n\Big(\inf X-\frac{\lambda_n}{2}\Big)-\varphi_n\Big(\sup X+\frac{\lambda_n}{2}\Big)\Big|^p,\label{small_o_of_n2}
\end{align}
The objective of this third step is to prove that if the function $\varphi$ is actually smooth, compactly supported with $\#\mathrm{Co}(\cH_\infty(\varphi))<+\infty$  then we have the following estimate
\begin{equation}\label{estim_negative}\begin{split}
\sum_{X\in\mathrm{Co}(\cH_\infty(\varphi_n))}\!\!\Big|\varphi_n\Big(\inf X-\frac{\lambda_n}{2}\Big)-\varphi_n\Big(\sup X+\frac{\lambda_n}{2}\Big)\Big|^p\;\leq\;\#\mathrm{Co}(\cH_\infty(\varphi))\;\lambda_n^p\;\|\nabla\varphi\|_{L^\infty}^p.\end{split}\end{equation}
With the construction of piece-wise linear approximation sequences given by Corollary \ref{continuity_result_1D_voxels}, it is a classical result of numerical analysis that
\begin{equation}\|\nabla\Lambda\varphi_n\|_{L^\infty}\leq\|\nabla\varphi\|_{L^\infty}.\end{equation}
The above inequality comes from the mean value theorem. We also have 
\begin{equation}
\#\mathrm{Co}(\cH_\infty(\varphi_n))\;\leq\;\#\mathrm{Co}(\cH_\infty(\varphi)).
\end{equation}
Then to obtain \eqref{estim_negative}, it is enough to explain why for $X\in\mathrm{Co}(\cH_\infty(\varphi_n))$ we have
\begin{equation}
\Big|\varphi_n\Big(\inf X-\frac{\lambda_n}{2}\Big)-\varphi_n\Big(\sup X+\frac{\lambda_n}{2}\Big)\Big|\;\leq\;\lambda_n\;\|\nabla\Lambda\varphi_n\|_{L^\infty}.\label{estim_negative_1}
\end{equation}
First we point out that neither $\xi_0:=\inf X-\frac{\lambda_n}{2}$ nor $\xi_1:=\sup X+\frac{\lambda_n}{2}$ are in $\cH_\infty(\varphi_n)$. But we have $\xi_0+\lambda_n$ and $\xi_1-\lambda_n$ that belong to $\cH_\infty(\varphi_n)$. We use the mean value theorem to write that for any $x\in\lambda_n\NN$ we have
\begin{equation}\Big|\varphi_n(x)-\varphi_n\Big(x+\lambda_n\Big)\Big|\;\leq\;\lambda_n\;\|\nabla\Lambda\varphi_n\|_{L^\infty}.\label{mean_value_theorem}\end{equation}
Without restriction of generality since $\xi_0$ and $\xi_1$ have symmetrical roles, we can suppose that
\begin{equation}
\varphi_n(\xi_0)\;\leq\;\varphi_n(\xi_1).
\end{equation}
In the other hand, the fact that $\xi_0$ and $\xi_1$ are the two extremities of a connected component of the hollows of $\varphi_n$ gives 
\begin{equation}\label{tagada tsoin tsoin}
\varphi_n(\xi_0)\;\geq\varphi_n\Big(\xi_1-\lambda_n\Big),
\end{equation}
where we used the fact that $\xi_1-\lambda_n$ does belong to the considered connected component of the hollows. Now, combining \eqref{mean_value_theorem} and \eqref{tagada tsoin tsoin} we obtain \eqref{estim_negative_1}.

~\newline\emph{\textbf{\mathversion{bold} Step 4: Passing to the limit $n\rightarrow+\infty$.}}\newline
Now let $\varphi\in W^{1,p}$. We note $\varphi_n$ the approximation of $\varphi$ with voxels of size $\lambda_n\!\times\!\mu_n$ constructed in the proof of Corollary \ref{continuity_result_1D_voxels}. With the standard theory of approximation by piece-wise linear continuous functions (using the fact that the height of the voxels vanishes faster than their width), we have 
\begin{equation}\Lambda\varphi_n\xrightarrow[n\rightarrow\infty]{W^{1,p}}\varphi\qquad\mathrm{and}\qquad\Lambda\varphi_n^\natural\xrightarrow[n\rightarrow\infty]{W^{1,p}}\varphi^\natural.\end{equation}

Nevertheless, it is not enough to conclude because we do not have a priori the fact that \eqref{small_o_of_n} is vanishing in the general case with the estimate \eqref{estim_negative} because both $\#\mathrm{Co}(\cH_\infty(\varphi))$ and $\|\nabla\varphi\|_{L^\infty}$ may be infinite. We now define the voxel approximation of $\Lambda\varphi_n$ that we note
$$\psi_{n,k}\;:=\;\Big(\Lambda\varphi_n\Big)_k.$$
The estimate \eqref{estim_negative} obtained in the previous steps becomes
\begin{align}
\sum_{X\in\mathrm{Co}(\cH_\infty(\psi_{n,k}))}\!\!\Big|\psi_{n,k}\Big(\inf X-\frac{\lambda_k}{2}\Big)-\psi_{n,k}\Big(\sup X+\frac{\lambda_k}{2}\Big)\Big|^p\;\\\leq\;\#\mathrm{Co}(\cH_\infty(\varphi_n))\;\lambda_k^p\;\|\nabla\varphi_n\|_{L^\infty}^p.\label{estim_negative_psi}
\end{align}
We set 
\begin{equation}
k_n\;:=\;\min\Big\{k\in\NN\;:\;n\,.\,\#\mathrm{Co}(\cH_\infty(\varphi_n))\,.\,\|\nabla\varphi_n\|_{L^\infty}^p\leq \frac{1}{\lambda_k^p}\Big\}.\label{160}
\end{equation}
We have $max\{n,k_n\}\rightarrow\infty$ when $n\rightarrow\infty$ and we define
$$\psi_n\;:=\;\psi_{n,\;\max\{n,k_n\}}.$$
By construction we still have the convergences
\begin{equation}\Lambda\psi_n\xrightarrow[n\rightarrow\infty]{W^{1,p}}\varphi\qquad\mathrm{and}\qquad\Lambda\psi_n^\natural\xrightarrow[n\rightarrow\infty]{W^{1,p}}\varphi^\natural,\end{equation}
but the term \eqref{estim_negative_psi} is alsovanishing when $n$ goes to infinity because with condition \eqref{160} this term is lower than $\frac{1}{n}$. The theorem is proved.\qed

\subsection{Proof of the counter-example for the Riesz inequality}

We want to explain why the condition $d\leq t< e-b$, implies
\begin{align*}
\int_{\RR_+}&\int_{\RR_+}\mathbbm{1}_{[0,e]}(x)\;.\;\Big(\mathbbm{1}_{[a,b]}+\mathbbm{1}_{[c,d]}\Big)(y)\;.\;\mathbbm{1}_{[-t,t]}(x-y)\;\mathrm{d}x\;\mathrm{d}y\\
&>\;\int_{\RR_+}\int_{\RR_+}\mathbbm{1}_{[0,e]}(x)\;.\;\mathbbm{1}_{[a,\,b-c+d]}(y)\;.\;\mathbbm{1}_{[-t,t]}(x-y)\;\mathrm{d}x\;\mathrm{d}y.
\end{align*}

\begin{figure}[h]\centering
\includegraphics[width=13cm]{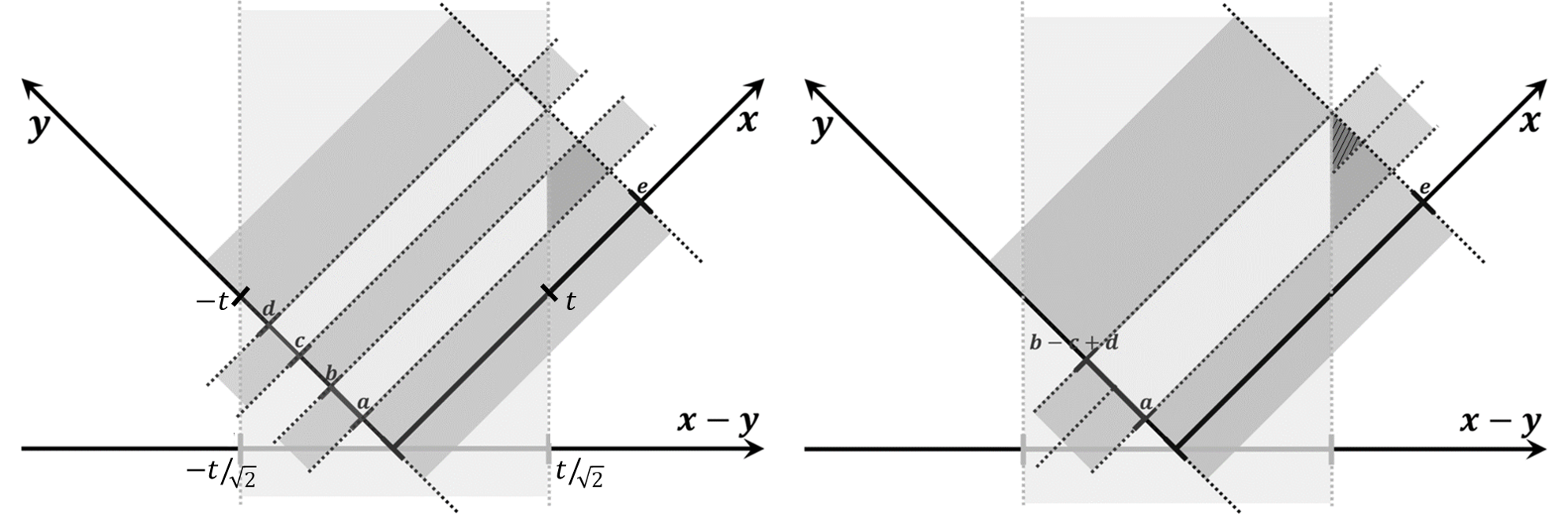}\centering
\caption{\label{Riesz_count} Computation of the integrals as areas of the support of the integrand. The calculated area is the area in light grey at the center of the image. On the right we represented with hatches the area that is ``\emph{lost}'' during the tamping process.}
\end{figure} 

Since we are working with indicator functions, the integrals that we intend to compare are equal to the area of the support of the two-variable product function which is a subset of $\RR^2$. This construction follows the one by Riesz for his rearrangement inequality \cite{RieszInequality} and is illustrated at Figure \ref{Riesz_count}. We consider $\RR^2$ - its canonical base is noted $(e_1,e_2)$ - with three axis. The first axis is $e_1.\RR$ that we also call the $x$-axis, the second one is $e_2.\RR$ (also called the $y$-axis) and the last one is $(e_1-e_2).\RR$ (the $(x-y)$-axis). We place the set $[0,e]$ on the $x$-axis, the set $[a,b]\cup[c,d]$ on the $y$-axis and the set $[-t,t]$ on the $(x-y)$-axis and we consider the cartesian product of these sets with their relative orthogonal right line. The considered area is then the intersection of all these sets as illustrated at figure \ref{Riesz_count} (the considered area is in light-grey at the center).

\begin{figure}[h!]
\includegraphics[width=12cm]{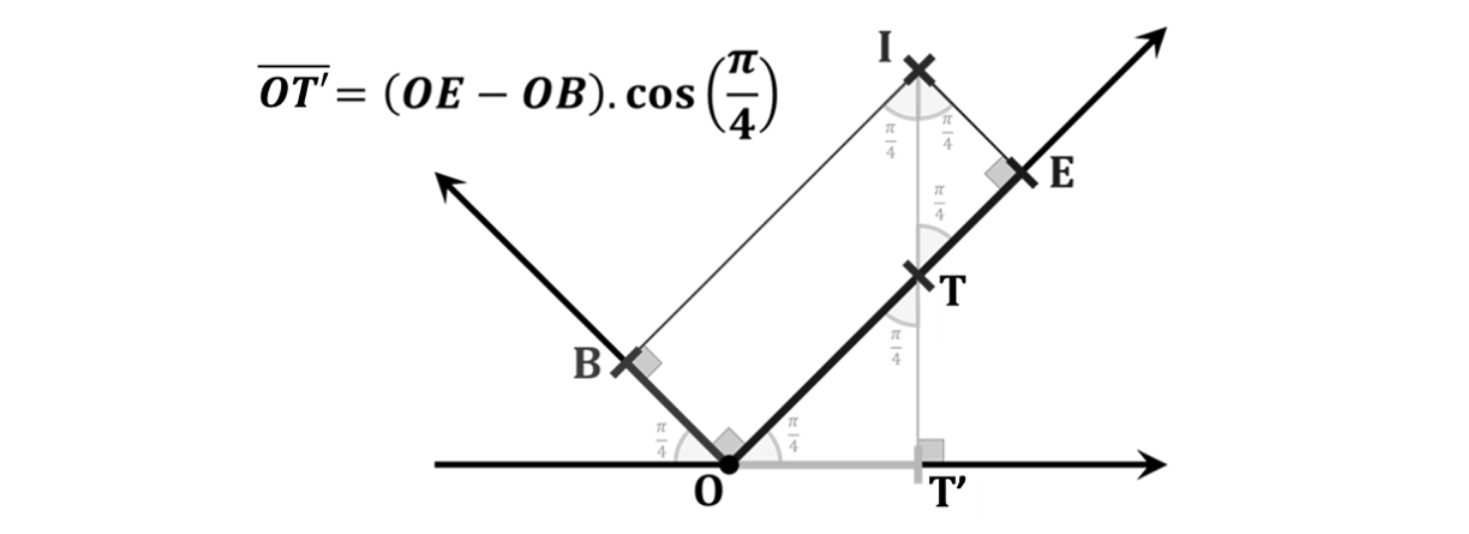}\centering
\caption{\label{Riesz_count_2} Geometrical construction explaining the condition $t< e-b$. What happens at the equality case.}
\end{figure}

Why the condition $d\leq t< e-b$ makes the counter-example work is the following. First we want the point $d$ on the $y$-axis to be inside the cylinder defined by the cartesian product of the segment $[-t,t]$ and its orthogonal right line the $(x+y)$-axis. This point reaches the border of this cylinder when we have $t=d.\cos(\pi/4)$, which gives the first inequality $d\leq t$. Indeed, if this inequality is not respected, then one may improve the considered area by sliding the segment $[c,d]$ along the $y$-axis in the decreasing sense. The other condition ensures that the area is decreased by the tamping. The area represented with hatches at Figure \ref{Riesz_count} (the area ``\emph{lost}'' during the tamping) is positive. This area is positive when the point that is at the intersection of the right line orthogonal to the $y$-axis and passing by point $b$ with the right line orthogonal to the $x$-axis passing by point $e$ (we noted this point $I$ at Figure \ref{Riesz_count_2}) is outside the cylinder defined by the cartesian product of the segment $[-t,t]$ and its orthogonal right line the $(x+y)$-axis. The reason why this corresponds to the condition $t< e-b$ is explained at Figure \ref{Riesz_count_2} where we drew the case of equality.

 \qed

~\newline\newline

\noindent{\Large\textbf{Acknowledgments}}\newline\indent
A special thank-you to my PhD supervisors Philippe Gravejat and Didier Smets for their precious help and confidence during all the elaboration of this article, for valuable comments and remarks and their meticulous rereading. Also thank-you to Idriss Mazari and Guillaume Lévy, PhD fellows, for interesting discussions about rearrangement problems. \newline\newline\indent
The author acknowledges support from the project “Dispersive and random waves” (ANR-28-CE40-0020-01) of the Agence Nationale de la Recherche, and from the
grant “Qualitative study of nonlinear dispersive equations” (Dispeq) of the European Research
Council.

{\baselineskip=11pt
\bibliographystyle{plain}
\bibliography{bibliography}
}

\end{document}